\newtheorem{theorem}{Theorem}
\newtheorem{corollary}{Corollary}
\newtheorem{lemma}{Lemma}
\newtheorem{proposition}{Proposition}
\theoremstyle{definition}
\newtheorem*{ack}{Acknowledgments}
\newtheorem{example}{Example}
\newtheorem{note}{Note}
\titleformat{\section}[runin]
  {\normalfont\large\bfseries}{\thesection.}{1em}{}
\titleformat{\subsection}[runin]
  {\normalfont\normalsize\bfseries}{\thesubsection.}{1em}{}
\title{\Large Non-pseudounitary fusion}
\author{\normalsize Andrew Schopieray\thanks{This material is based upon work supported by the National Science Foundation under Grant No. DMS-1440140, while the author was in residence at the Mathematical Sciences Research Institute in Berkeley, California, during the Spring 2020 semester.}}
\date{\today}
\begin{document}
\maketitle

\begin{abstract}
We prove there exist infinitely many inequivalent fusion categories whose Grothendieck rings do not admit any pseudounitary categorifications.
\end{abstract}

\section{Introduction}
\label{sec:intro}

Fusion categories are both a generalization of the categories of representations of finite groups, and an algebraic axiomatization of the notion of quantum symmetry.  Technically, fusion categories $\mathcal{C}$ (over $\mathbb{C}$) are $\mathbb{C}$-linear semisimple tensor categories with finitely many isomorphism classes of simple objects, the set of which will be denoted $\mathcal{O}(\mathcal{C})$, and a tensor unit $\mathbbm{1}\in\mathcal{O}(\mathcal{C})$.  Evidence that this is an important area of research is that the same objects arise from a study of representation theory of Lie algebras, subfactors and planar algebras, vertex operator algebras, and conformal field theory.  In this sense, fusion categories are an inevitable result of late 20th and early 21st century mathematics and mathematical physics.

\par The skeleton of a fusion category $\mathcal{C}$ is its Grothendieck ring, $K(\mathcal{C})$, which is an example of a fusion ring \cite[Definition 3.1.7]{tcat}.  There are infinitely many fusion rings $R$ which do not arise in this fashion even with rank 2 \cite{MR1981895}, so we say a fusion category $\mathcal{C}$ is a \emph{categorification} of $R$ if $R=K(\mathcal{C})$, and reflexively that $R$ is \emph{categorifiable} if there exists a fusion category $\mathcal{C}$ such that $R=K(\mathcal{C})$.   Ring homomorphisms $\varphi:K(\mathcal{C})\to\mathbb{C}$ are referred to as dimension functions and provide a method for measuring the ``size'' of a fusion category via $\varphi(\mathcal{C}):=\sum_{X\in\mathcal{O}(\mathcal{C})}|\varphi(X)|^2$.  Every fusion category possesses the dimension function $\mathrm{FPdim}:K(\mathcal{C})\to\mathbb{C}$ \cite[Proposition 3.3.6(i)]{tcat} which for each $X\in\mathcal{O}(\mathcal{C})$ is computed as the Frobenius-Perron (maximal real) eigenvalue of the matrix of tensoring with $X$.  Fusion categories which possess a spherical structure \cite[Section 4.7]{tcat} have another dimension function $\dim:K(\mathcal{C})\to\mathbb{C}$ often referred to as categorical dimension.  When $\dim(\mathcal{C})=\mathrm{FPdim}(\mathcal{C})$ we say that $\mathcal{C}$ is \emph{pseudounitary}, and that $K(\mathcal{C})$ admits a \emph{pseudounitary categorification}.  Any weakly integral fusion category $\mathcal{C}$, i.e. \!$\mathrm{FPdim}(\mathcal{C})\in\mathbb{Z}$, is pseudounitary \cite[Proposition 8.24]{ENO} which includes representation categories of finite-dimensional quasi-Hopf algebras \cite[Theorem 8.33]{ENO}.  Pseudounitary fusion categories are privy to a vast range of tools and results which generic fusion categories are not.  For example \cite[Proposition 8.23]{tcat}, if $\mathcal{C}$ is a pseudounitary fusion category, there exists a unique spherical structure on $\mathcal{C}$ such that $\dim(X)=\mathrm{FPdim}(X)$ for all $X\in\mathcal{O}(\mathcal{C})$.  For these reasons and more, it is not surprising that a large proportion of the literature to date assumes, whether implicitly or explicitly, that the fusion categories in question are pseudounitary; the goal of this paper is to show that this assumption is far from innocuous.  We prove the set of fusion rings which admit a pseudounitary categorification is a strict subset of the set of all categorifiable fusion rings.

\begin{theorem}\label{the}
There exists a fusion category of rank $6$ whose Grothendieck ring does not admit any pseudounitary categorifications.
\end{theorem}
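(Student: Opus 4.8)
The plan is to fix one explicit fusion ring $R$ of rank $6$, verify that $R$ is categorifiable, and then prove that \emph{every} fusion category with Grothendieck ring $R$ fails to be pseudounitary. Since weakly integral fusion categories are automatically pseudounitary, $R$ has to be non-integral, so I will arrange that at least one simple object carries an irrational Frobenius--Perron dimension; this simultaneously removes the integrality shortcut $\mathrm{FPdim}(\mathcal{C})\in\mathbb{Z}$. I would first present $R$ by a multiplication table — a small group of invertibles together with one non-invertible generator is the natural shape — read off $\mathrm{FPdim}(X)$ for each simple $X$ as the Perron eigenvalue of the associated fusion matrix, and then enumerate all ring homomorphisms $\varphi\colon R\to\mathbb{C}$. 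These characters occur in complex-conjugate pairs, and the ones fixed by conjugation are exactly those that could serve as the categorical dimension function $\dim$ of a spherical categorification of $R$, since any such $\dim$ is a ring homomorphism with $\dim X=\dim X^{\ast}=\overline{\dim X}$. Sorting the characters into Galois orbits over $\mathbb{Q}$, the key fact I would isolate is that $\mathrm{FPdim}$ lies in a Galois orbit disjoint from the orbit(s) of the dimension characters that an actual categorification of $R$ can realize.

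Second, for categorifiability I would exhibit one fusion category $\mathcal{C}$ with $K(\mathcal{C})=R$ — either by recognizing $R$ among known small-rank examples (a non-principal root-of-unity specialization of a quantum-group category, a factor of a Drinfeld center, a de-equivariantization, or an explicit pentagon solution) — and then check that the canonical spherical structure on $\mathcal{C}$ gives no simple object its Frobenius--Perron dimension. Thus $\mathcal{C}$ is already visibly non-pseudounitary, in the spirit of the non-unitary categorification of the Fibonacci fusion ring, but the theorem demands that all categorifications, not just this one, be excluded.

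This exclusion is the heart of the proof and the step I expect to be the main obstacle. Suppose $\mathcal{D}$ is a pseudounitary categorification of $R$; by the cited result, $\dim(\mathcal{D})=\mathrm{FPdim}(\mathcal{D})$ forces a spherical structure on $\mathcal{D}$ with $\dim(X)=\mathrm{FPdim}(X)$ for every simple $X$, so the dimension character of $\mathcal{D}$ must equal $\mathrm{FPdim}$. I would contradict this by classifying the categorifications of $R$: with module-category and Morita-theoretic tools, or a direct study of the associativity data compatible with the fusion rules of $R$ (using that $\mathcal{C}$ is an extension of a much smaller and completely understood fusion category), one shows that every fusion category with Grothendieck ring $R$ is a Galois conjugate of $\mathcal{C}$, up to the finitely many choices coming from pivotal structures and the universal grading. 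Galois conjugation and grading twists move the dimension character only within the fixed Galois orbit of $\mathcal{C}$'s dimension character, and that orbit does not contain $\mathrm{FPdim}$ by the first step; hence no categorification of $R$ has dimension character $\mathrm{FPdim}$, which is the theorem. If a sufficiently rigid classification proves elusive, a fallback is a purely arithmetic obstruction — showing that the maximal formal codegree $\mathrm{FPdim}(\mathcal{C})$ of $R$ cannot be the global dimension of any spherical fusion category — but I expect the rigidity argument to be what ultimately closes the case.
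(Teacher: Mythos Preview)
Your plan rests on classifying all fusion categories with Grothendieck ring $R$ up to Galois conjugation, and this is where it stalls. For the ring actually used, $K(\mathcal{R})$, the pointed part is trivial (the only invertible simple is $\mathbbm{1}$), so your ``extension of a much smaller and completely understood fusion category'' heuristic does not apply, and no rigidity statement of the form ``every categorification is a Galois twist of $\mathcal{R}$'' is available or within reach by module-category or pentagon methods. Your fallback is also circular: $\mathrm{FPdim}(R)$ \emph{would} be the global dimension of a pseudounitary spherical categorification if one existed, so ruling that value out as a global dimension is exactly the theorem restated.

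The paper never attempts to enumerate categorifications of $R$. Instead it assumes a pseudounitary $\mathcal{C}$ with $K(\mathcal{C})=R$ exists and studies the Drinfeld center $\mathcal{Z}(\mathcal{C})$, which is automatically modular and inherits pseudounitarity. A direct computation (Proposition~\ref{brayded}) first disposes of the \emph{braided} pseudounitary case via the Galois action on modular data and the Verlinde formula. The main effort is then arithmetic: one classifies the totally positive algebraic $d$-numbers in $\mathbb{Q}(\zeta_9)^+$ of $3$-power norm that can occur as $\dim(X)^2$ for $X\in\mathcal{O}(\mathcal{Z}(\mathcal{C}))$, and combines this with the induction--restriction formalism between $\mathcal{C}$ and $\mathcal{Z}(\mathcal{C})$ to determine $\mathcal{O}(\mathcal{Z}(\mathcal{C}))$ and the forgetful images $F(X)$ completely (Proposition~\ref{dimensions}). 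From this explicit description one exhibits a braided fusion subcategory $\mathcal{D}\subset\mathcal{Z}(\mathcal{C})$ with $K(\mathcal{D})=R$, contradicting Proposition~\ref{brayded}. The idea you are missing is that passing to the center converts an intractable classification of categorifications into a tractable structural analysis: modularity supplies Galois symmetry and $d$-number constraints strong enough to pin down $\mathcal{Z}(\mathcal{C})$ without ever identifying $\mathcal{C}$ itself.
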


The proof of Theorem \ref{the} is outlined in Section \ref{sec:proof} while the specific details are relegated to Sections \ref{sec:fus}--\ref{sec:struc}; Corollary \ref{thecor} describes how infinitely many examples may be constructed from this initial one.   We assume the reader is familiar with the basic results of fusion and modular tensor categories found in a standard text such as \cite{tcat}.  Exotic examples are not needed to achieve this result.  Only examples from the representation theory of quantum groups at roots of unity are needed, a topic which is over 30 years old at this time.  The literature on these examples is so vast that we direct the reader to two expository papers on the subject \cite{rowell,MR4079742} where one can find condensed histories and extensive references.  The rank 6 fusion category we provide satisfying the conditions of Theorem \ref{the} we label $\mathcal{R}:=\mathcal{C}(\mathfrak{so}_5,3/2)_\mathrm{ad}$ for brevity, or $\mathcal{C}(\mathfrak{so}_5,9,q)_\mathrm{ad}$ in the notation of \cite{MR4079742} where $q=\exp(\pi i/9)$.   The category $\mathcal{R}$ is described in detail in Example \ref{one}; it is self-dual, i.e. \!$X\cong X^\ast$ for all $X\in\mathcal{O}(\mathcal{R})$, and has the structure of a \emph{modular tensor category}, i.e. \!$\mathcal{R}$ is a spherical fusion category equipped with a nondegenerate braiding.  Modular tensor categories of rank strictly less than 6 have been classified \cite{MR3632091,MR2544735} and all necessary underlying fusion rings have pseudounitary categorifications.  And so if the classification of modular tensor categories by rank is to continue, it must enter a new and inherently non-pseudounitary world.

\par There is a related notion of a \emph{unitary} (spherical/braided/modular) fusion category which requires morphism spaces be equipped with a positive-definite Hermitian form satisfying copious compatibility conditions with the existing categorical structures (e.g. \!\cite[Definition 2.18]{MR3367967}).  As a consequence, all unitary fusion categories are pseudounitary, but currently it is not known if these concepts are equivalent.  Nonetheless, Corollary \ref{the} trivially implies there exist infinitely many isomorphism classes of categorifiable fusion rings which admit no unitary categorifications as well.  Unitarity comes along naturally with many of the analytical frameworks which produce fusion categories such as subfactors and vertex operator algebras.  An analog of Corollary \ref{the} has already been proven in the unitary/braided setting \cite[Theorem 3.8(a)]{MR2414692}.  Specifically, it was shown that if $k\in\mathbb{Z}_{\geq2}$ and $q^2$ is a primitive $\ell^\text{\tiny{th}}$ root of unity such that $\ell$ is odd and $2k+5\leq\ell$, then neither $K(\mathcal{C}(\mathfrak{so}_{2k+1},q, \ell))$ nor $K(\mathcal{C}(\mathfrak{sp}_{\ell-2k-1},q,\ell))$ are categorifiable by a unitary braided fusion category.  Our Theorem \ref{the} implies that the assumptions of a braiding and unitarity in \cite[Theorem 3.8(a)]{MR2414692} can be replaced by the strictly weaker assumption of pseudounitarity in at least one case; it would be interesting to know if this true in general.  Lastly, it is of great importance to the classification of fusion categories whether fusion rings such as $K(\mathcal{R})$ are exceptional, or expected.  In particular, for every $M\in\mathbb{R}_{\geq1}$, there exist finitely many categorifiable fusion rings $R$ with $\mathrm{FPdim}(R)\leq M$ \cite[Corollary 3.13]{paul}.  What proportion of categorifiable fusion rings admit a pseudounitary categorification for large $M$?  If this proportion is small, then stronger non-pseudounitary tools will be crucial to the study fusion categories henceforth.

\par 

\begin{ack}
We would like to thank Victor Ostrik and Eric Rowell for posing the initial question which led to Theorem \ref{the}, and for reviewing early drafts.
\end{ack}

%%%%%%%%%%%%%%%%%%%

\section{Proof of Theorem \ref{the}}\label{sec:proof}

In Section \ref{sec:fus}, we describe the fusion category $\mathcal{R}:=\mathcal{C}(\mathfrak{so}_5,3/2)_\mathrm{ad}$ whose existence proves Theorem \ref{the}.   Proposition \ref{brayded} first states that there does not exist a pseudounitary \emph{braided} fusion category $\mathcal{C}$ with $K(\mathcal{C})=K(\mathcal{R})$.  To prove our claim without assuming a braiding exists, we pass to the Drinfeld center $\mathcal{Z}(\mathcal{C})$ \cite[Section 8.5]{tcat}, a modular tensor category.  Proposition \ref{last} then states that the dimensions of noninvertible elements of $\mathcal{O}(\mathcal{Z}(\mathcal{C}))$ generate $\mathbb{Q}(\zeta_9)^+$, the totally real cubic subfield of the cyclotomic field of ninth roots of unity.
\par In Section \ref{nums}, we classify all totally positive algebraic $d$-numbers of norm $3^n$ which are perfect squares in, and generate, $\mathbb{Q}(\zeta_9)^+$.  These are a superset of all possible squared dimensions of elements of $\mathcal{O}(\mathcal{Z}(\mathcal{C}))$.  In Section \ref{sec:pos}, Proposition \ref{dimensions} determines the rank of $\mathcal{Z}(\mathcal{C})$ and dimensions of $Z\in\mathcal{O}(\mathcal{Z}(\mathcal{C}))$ using the Galois action on the modular data, the induction functor $I:\mathcal{C}\to\mathcal{Z}(\mathcal{C})$ \cite[Section 9.2]{tcat}, and forgetful functor $F:\mathcal{Z}(\mathcal{C})\to\mathcal{C}$.  Lastly, Proposition \ref{prop:sub} proves that a simple object of $\mathcal{Z}(\mathcal{C})$ of smallest nontrivial dimension generates a braided fusion subcategory $\mathcal{D}\subset\mathcal{Z}(\mathcal{C})$ with $K(\mathcal{D})=K(\mathcal{R})$.  This is incompatible with Proposition \ref{brayded}, therefore no such pseudounitary categorification of $K(\mathcal{R})$ exists.

\par Elementary computations, particularly artithmetic in $\mathbb{Q}(\zeta_9)$, are performed using the open-source software SageMath \cite{sagemath}.  We have included the necessary code in Appendix \ref{sage} and in a supplementary text file.

\begin{corollary}\label{thecor}
There exist infinitely many fusion categories whose Grothendieck rings do not admit any pseudounitary categorifications.
\end{corollary}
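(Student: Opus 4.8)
The plan is to bootstrap from the single category $\mathcal{R}$ of Theorem \ref{the} by taking Deligne products with pointed categories. For each integer $n\geq 1$ set $\mathcal{R}_n:=\mathcal{R}\boxtimes\mathrm{Vec}_{\mathbb{Z}/n\mathbb{Z}}$, a fusion category with $K(\mathcal{R}_n)=K(\mathcal{R})\otimes\mathbb{Z}[\mathbb{Z}/n\mathbb{Z}]$; here I write $g^i$, $i\in\mathbb{Z}/n\mathbb{Z}$, for the invertible objects of $\mathrm{Vec}_{\mathbb{Z}/n\mathbb{Z}}$, so that the simple objects of $\mathcal{R}_n$ are the $X\boxtimes g^i$ with $X\in\mathcal{O}(\mathcal{R})$. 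Since $\mathcal{R}_n$ has $6n$ isomorphism classes of simple objects, the rings $K(\mathcal{R}_n)$ are pairwise nonisomorphic, and the $\mathcal{R}_n$ are in particular pairwise inequivalent; thus it suffices to show that $K(\mathcal{R}_n)$ admits no pseudounitary categorification for any $n$.

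So suppose, for contradiction, that $\mathcal{D}$ is a pseudounitary fusion category with $K(\mathcal{D})\cong K(\mathcal{R})\otimes\mathbb{Z}[\mathbb{Z}/n\mathbb{Z}]$. I would extract from $\mathcal{D}$ a fusion subcategory categorifying $K(\mathcal{R})$ itself, using the universal grading \cite{tcat}. The universal grading of a fusion category is determined by its Grothendieck ring, the universal grading group of a Deligne product is the product of the universal grading groups, and the grading of a product is the product grading; since $\mathbb{Z}[\mathbb{Z}/n\mathbb{Z}]$ has trivial adjoint subring, $U(\mathcal{D})\cong U(\mathcal{R})\times\mathbb{Z}/n\mathbb{Z}$, and under the identification $K(\mathcal{D})=K(\mathcal{R})\otimes\mathbb{Z}[\mathbb{Z}/n\mathbb{Z}]$ the $(t,i)$-component of the universal grading of $\mathcal{D}$ is spanned by the simple objects $X\boxtimes g^i$ with $X$ lying in the degree-$t$ component of the universal grading of $\mathcal{R}$. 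Hence the preimage $\mathcal{E}\subseteq\mathcal{D}$ of the subgroup $U(\mathcal{R})\times\{0\}\leq U(\mathcal{R})\times\mathbb{Z}/n\mathbb{Z}$ is a fusion subcategory whose simple objects are exactly $\{\,X\boxtimes g^0:X\in\mathcal{O}(\mathcal{R})\,\}$, so that $K(\mathcal{E})\cong K(\mathcal{R})$.

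Finally, pseudounitarity descends to fusion subcategories. By \cite[Proposition 8.23]{tcat} there is a spherical structure on $\mathcal{D}$ with $\dim(Y)=\mathrm{FPdim}(Y)$ for every $Y\in\mathcal{O}(\mathcal{D})$; this structure restricts to a spherical structure on $\mathcal{E}$, and since Frobenius--Perron dimensions are insensitive to the ambient category, the restriction satisfies $\dim(X)=\mathrm{FPdim}(X)$ for all $X\in\mathcal{O}(\mathcal{E})$, whence $\dim(\mathcal{E})=\mathrm{FPdim}(\mathcal{E})$. Thus $\mathcal{E}$ is a pseudounitary categorification of $K(\mathcal{R})$, contradicting Theorem \ref{the}; this contradiction shows $K(\mathcal{R}_n)$ has no pseudounitary categorification and finishes the proof.

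The one step requiring care --- and the place where a sloppy argument would go wrong --- is the middle one: one must identify the universal grading of $K(\mathcal{R})\otimes\mathbb{Z}[\mathbb{Z}/n\mathbb{Z}]$ precisely enough to be sure $\mathcal{E}$ recovers $K(\mathcal{R})$ on the nose, and in particular that the cyclic factor being quotiented out is the one coming from the pointed tensorand rather than an artifact of a grading that $\mathcal{R}$ may itself carry (the adjoint subcategory $\mathcal{D}_{\mathrm{ad}}$ alone only categorifies $K(\mathcal{R})_{\mathrm{ad}}$, which is not enough). An alternative that sidesteps pointed categories entirely is to run the same argument with $\mathcal{R}^{\boxtimes n}$ in place of $\mathcal{R}\boxtimes\mathrm{Vec}_{\mathbb{Z}/n\mathbb{Z}}$, peeling off a single tensor factor at each stage. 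Everything else is routine bookkeeping.
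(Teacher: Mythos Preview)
Your proof is correct and follows the same high-level strategy as the paper: form Deligne products $\mathcal{R}\boxtimes\mathcal{R}'$, argue that any pseudounitary categorification of $K(\mathcal{R})\times K(\mathcal{R}')$ contains a pseudounitary fusion subcategory with Grothendieck ring $K(\mathcal{R})$, and invoke Theorem~\ref{the}. The paper's extraction of that subcategory is more direct than yours, however: rather than routing through the universal grading, it simply observes that the simples $\{X\boxtimes\mathbbm{1}:X\in\mathcal{O}(\mathcal{R})\}$ span a based subring of $K(\mathcal{R})\times K(\mathcal{R}')$ isomorphic to $K(\mathcal{R})$, and any such subring is the Grothendieck ring of a fusion subcategory. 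This works for \emph{any} pseudounitary $\mathcal{R}'$ (the paper takes $\mathcal{R}'=\mathrm{Rep}(G)$), not only pointed ones, and sidesteps the bookkeeping you flag in your final paragraph. Incidentally, that worry is moot here anyway: from the fusion matrices one sees $X_3\otimes X_3$ contains every simple, so $\mathcal{R}_{\mathrm{ad}}=\mathcal{R}$, $U(\mathcal{R})$ is trivial, and your $\mathcal{E}$ is nothing other than $\mathcal{D}_{\mathrm{ad}}$.
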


\begin{proof} Let $\mathcal{R}$ be the rank 6 fusion category of Theorem \ref{the}, $\mathcal{R}'$ be any pseudounitary fusion category, and assume there exists a pseudounitary fusion category $\mathcal{C}$ with $K(\mathcal{C})=K(\mathcal{R}\boxtimes\mathcal{R}')=K(\mathcal{R})\times K(\mathcal{R}')$.  Then there exists a pseudounitary (as $\dim$ and $\mathrm{FPdim}$ are multiplicative) fusion subcategory $\mathcal{D}\subset\mathcal{C}$ with $K(\mathcal{C})=K(\mathcal{R})$, which cannot exist by Theorem \ref{the}.  Hence $K(\mathcal{R}\boxtimes\mathcal{R}')$ has no pseudounitary categorifications either.   The fact that infinitely-many inequivalent pseudounitary fusion categories $\mathcal{R}'$ exist, e.g. \!$\mathrm{Rep}(G)$ for any finite group $G$, completes our proof.
\end{proof}

%%%%%%%%%%%%%%%%%%%%%%%%%%%%%

\section{The category $\mathcal{R}:=\mathcal{C}(\mathfrak{so}_5,3/2)_\mathrm{ad}$}\label{sec:fus}
\begin{example}\label{one}
At least two styles of notation are used to represent the spherical braided fusion categories arising from quantum groups at roots of unity.  One is $\mathcal{C}(\mathfrak{g},\ell,q)$ where $q$ is a root of unity such that $q^2$ has order $\ell\in\mathbb{Z}_{\geq2}$, and another is $\mathcal{C}(\mathfrak{g},k)$ where $q^2=\exp(2\pi i/(m(k+h^\vee)))$ with $m=1,2,3$ the laceity of $\mathfrak{g}$, $h^\vee$ its dual Coxeter number, and $k\in\frac{1}{m}\mathbb{Z}$ referred to as the level of $\mathcal{C}(\mathfrak{g},k)$.  The former describes a larger set of categories than the latter.  When the order of $q^2$ is sufficiently large, $\mathcal{O}(\mathcal{C}(\mathfrak{g},k))$ is indexed by weights in a truncated rendition of the classical dominant Weyl chamber.  The geometry of this truncation is sensitive to the order of $q^2$ when $\mathfrak{g}$ is not simply-laced.  We illustrate this in Figure \ref{fig:g21e} for $\mathfrak{g}=\mathfrak{so}_5$ where the solid lines indicate the walls of the classical dominant Weyl chamber and the dotted line indicates the truncation.  When $q^2$ is a ninth root of unity, the truncation is made perpendicular to the short root, and when $q^2$ is a tenth root of unity, the truncation is made perpendicular to the long root \cite[Section 3.1]{rowell}.

\begin{figure}[H]
\centering
\begin{subfigure}{.5\textwidth}
  \centering
\begin{equation*}
\begin{tikzpicture}[scale=1]
\node at (0,-0.33) {\tiny $X_0$};
\node at (0,1-0.33) {\tiny $X_1$};
\node at (0,2-0.33) {\tiny $X_2$};
\node at (1,1-0.33) {\tiny $X_3$};
\node at (1,2-0.33) {\tiny $X_4$};
\node at (2,2-0.33) {\tiny $X_5$};

\node at (-1,-2) {$\cdot$};
\node at (0,-2) {$\cdot$};
\node at (1,-2) {$\cdot$};
\node at (2,-2) {$\cdot$};
\node at (3,-2) {$\cdot$};
\node at (4,-2) {$\cdot$};

\node at (-1/2,-3/2) {$\cdot$};
\node at (1/2,-3/2) {$\cdot$};
\node at (3/2,-3/2) {$\cdot$};
\node at (5/2,-3/2) {$\cdot$};
\node at (7/2,-3/2) {$\cdot$};
\node at (9/2,-3/2) {$\cdot$};

\node at (-1,-1) {$\cdot$};
\node at (0,-1) {$\cdot$};
\node at (1,-1) {$\cdot$};
\node at (2,-1) {$\cdot$};
\node at (3,-1) {$\cdot$};
\node at (4,-1) {$\cdot$};

\node at (-1/2,-1/2) {$\cdot$};
\node at (1/2,-1/2) {$\cdot$};
\node at (3/2,-1/2) {$\cdot$};
\node at (5/2,-1/2) {$\cdot$};
\node at (7/2,-1/2) {$\cdot$};
\node at (9/2,-1/2) {$\cdot$};

\node at (-1,0) {$\cdot$};
\node at (0,0) {$\blacklozenge$};
\node at (1,0) {$\cdot$};
\node at (2,0) {$\cdot$};
\node at (3,0) {$\cdot$};
\node at (4,0) {$\cdot$};

\node at (-1/2,1/2) {$\cdot$};
\node at (3/2,1/2) {$\cdot$};
\node at (5/2,1/2) {$\cdot$};
\node at (7/2,1/2) {$\cdot$};
\node at (9/2,1/2) {$\cdot$};

\node at (-1,1) {$\cdot$};
\node at (2,1) {$\cdot$};
\node at (3,1) {$\cdot$};
\node at (4,1) {$\cdot$};

\node at (-1/2,3/2) {$\cdot$};
\node at (5/2,3/2) {$\cdot$};
\node at (7/2,3/2) {$\cdot$};
\node at (9/2,3/2) {$\cdot$};

\node at (-1,2) {$\cdot$};
\node at (3,2) {$\cdot$};
\node at (4,2) {$\cdot$};

\node at (-1/2,5/2) {$\cdot$};
\node at (7/2,5/2) {$\cdot$};
\node at (9/2,5/2) {$\cdot$};

\node at (-1,3) {$\cdot$};
\node at (0,3) {$\cdot$};
\node at (1,3) {$\cdot$};
\node at (2,3) {$\cdot$};
\node at (3,3) {$\cdot$};
\node at (4,3) {$\cdot$};

\node at (-1/2,7/2) {$\cdot$};
\node at (1/2,7/2) {$\cdot$};
\node at (3/2,7/2) {$\cdot$};
\node at (5/2,7/2) {$\cdot$};
\node at (7/2,7/2) {$\cdot$};
\node at (9/2,7/2) {$\cdot$};

\node at (1/2,1/2) {$\lozenge$};
\node at (0,1) {$\blacklozenge$};
\node at (1,1) {$\blacklozenge$};
\node at (1/2,3/2) {$\lozenge$};
\node at (3/2,3/2) {$\lozenge$};
\node at (0,2) {$\blacklozenge$};
\node at (1,2) {$\blacklozenge$};
\node at (2,2) {$\blacklozenge$};
\node at (1/2,5/2) {$\lozenge$};
\node at (3/2,5/2) {$\lozenge$};
\node at (5/2,5/2) {$\lozenge$};
\draw[<->] (-1/2,-2) -- (-1/2,7/2);
\draw[dashed,<->] (-1,3) -- (9/2,3);
\draw[<->] (-1,-2) -- (9/2,7/2);
\end{tikzpicture}
\end{equation*}
  \caption{$k=3/2$}
  \label{fig:so532}
\end{subfigure}%
\begin{subfigure}{.5\textwidth}
  \centering
\begin{equation*}
\begin{tikzpicture}[scale=1]
\node at (-1,-2) {$\cdot$};
\node at (0,-2) {$\cdot$};
\node at (1,-2) {$\cdot$};
\node at (2,-2) {$\cdot$};

\node at (-1/2,-3/2) {$\cdot$};
\node at (1/2,-3/2) {$\cdot$};
\node at (3/2,-3/2) {$\cdot$};
\node at (5/2,-3/2) {$\cdot$};

\node at (-1,-1) {$\cdot$};
\node at (0,-1) {$\cdot$};
\node at (1,-1) {$\cdot$};
\node at (2,-1) {$\cdot$};

\node at (-1/2,-1/2) {$\cdot$};
\node at (1/2,-1/2) {$\cdot$};
\node at (3/2,-1/2) {$\cdot$};
\node at (5/2,-1/2) {$\cdot$};

\node at (-1,0) {$\cdot$};
\node at (0,0) {$\blacklozenge$};
\node at (1,0) {$\cdot$};
\node at (2,0) {$\cdot$};

\node at (-1/2,1/2) {$\cdot$};
\node at (3/2,1/2) {$\cdot$};
\node at (5/2,1/2) {$\cdot$};

\node at (-1,1) {$\cdot$};
\node at (2,1) {$\cdot$};

\node at (-1/2,3/2) {$\cdot$};
\node at (5/2,3/2) {$\cdot$};

\node at (-1,2) {$\cdot$};

\node at (-1/2,5/2) {$\cdot$};

\node at (-1,3) {$\cdot$};
\node at (0,3) {$\cdot$};
\node at (1,3) {$\cdot$};
\node at (2,3) {$\cdot$};

\node at (-1,4) {$\cdot$};
\node at (0,4) {$\cdot$};
\node at (1,4) {$\cdot$};
\node at (2,4) {$\cdot$};

\node at (-1/2,7/2) {$\cdot$};
\node at (1/2,7/2) {$\cdot$};
\node at (3/2,7/2) {$\cdot$};
\node at (5/2,7/2) {$\cdot$};

\node at (1/2,1/2) {$\lozenge$};
\node at (0,1) {$\blacklozenge$};
\node at (1,1) {$\blacklozenge$};
\node at (1/2,3/2) {$\lozenge$};
\node at (3/2,3/2) {$\cdot$};
\node at (0,2) {$\blacklozenge$};
\node at (1,2) {$\cdot$};
\node at (2,2) {$\cdot$};
\node at (1/2,5/2) {$\cdot$};
\node at (3/2,5/2) {$\cdot$};
\node at (5/2,5/2) {$\cdot$};
\draw[<->] (-1/2,-2) -- (-1/2,4);
\draw[dashed,<->] (-1,4) -- (5/2,1/2);
\draw[<->] (-1,-2) -- (5/2,3/2);
\end{tikzpicture}
\end{equation*}
  \caption{$k=2$}
  \label{fig:so52}
\end{subfigure}
\caption{$\mathcal{C}(\mathfrak{so}_5,k)_\mathrm{ad}$ ($\blacklozenge$) and $\mathcal{C}(\mathfrak{so}_5,k)$ ($\lozenge$ \& $\blacklozenge$)}
\label{fig:g21e}
\end{figure}
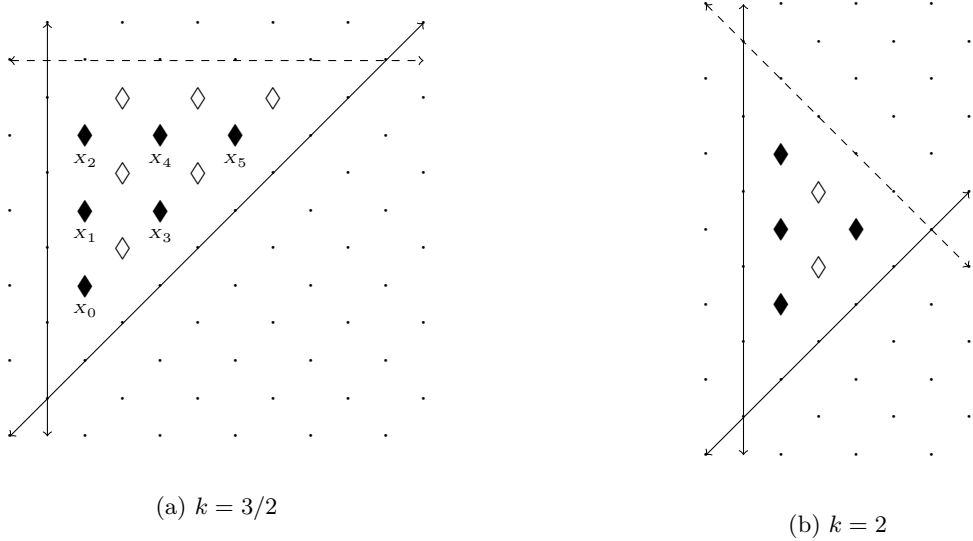  

The subject of this exposition is the adjoint subcategory \cite[Definition 4.14.5]{tcat} $\mathcal{R}:=\mathcal{C}(\mathfrak{so}_5,3/2)_\mathrm{ad}$ whose simple objects are indexed by dominant weights lying in the root lattice, and categorifications of its Grothendieck ring.  We will order the elements of $\mathcal{O}(\mathcal{R})$ as in \cite{rowell} by $X_0,X_1,X_2,X_3,X_4,X_5$ for consistency.  The fusion rules of $\mathcal{R}$ are tabulated here for reference where $(N_i)_{j,k}=\dim\mathrm{Hom}(X_i\otimes X_j,X_k)$.

\begin{align}
N_0&=\left[\begin{array}{cccccc}
1 & 0 & 0 & 0 & 0 & 0 \\
0 & 1 & 0 & 0 & 0 & 0 \\
0 & 0 & 1 & 0 & 0 & 0 \\
0 & 0 & 0 & 1 & 0 & 0 \\
0 & 0 & 0 & 0 & 1 & 0 \\
0 & 0 & 0 & 0 & 0 & 1
\end{array}\right]&
N_1&=\left[\begin{array}{cccccc}
0 & 1 & 0 & 0 & 0 & 0 \\
1 & 0 & 1 & 1 & 0 & 0 \\
0 & 1 & 0 & 0 & 1 & 0 \\
0 & 1 & 0 & 1 & 1 & 0 \\
0 & 0 & 1 & 1 & 1 & 1 \\
0 & 0 & 0 & 0 & 1 & 1
\end{array}\right]&
N_2&=\left[\begin{array}{cccccc}
0 & 0 & 1 & 0 & 0 & 0 \\
0 & 1 & 0 & 0 & 1 & 0 \\
1 & 0 & 0 & 1 & 0 & 1 \\
0 & 0 & 1 & 1 & 1 & 0 \\
0 & 1 & 0 & 1 & 1 & 1 \\
0 & 0 & 1 & 0 & 1 & 0
\end{array}\right]                \\
N_3&=\left[\begin{array}{cccccc}
0 & 0 & 0 & 1 & 0 & 0 \\
0 & 1 & 0 & 1 & 1 & 0 \\
0 & 0 & 1 & 1 & 1 & 0 \\
1 & 1 & 1 & 1 & 1 & 1 \\
0 & 1 & 1 & 1 & 2 & 1 \\
0 & 0 & 0 & 1 & 1 & 1
\end{array}\right]&
N_4&=\left[\begin{array}{cccccc}
0 & 0 & 0 & 0 & 1 & 0 \\
0 & 0 & 1 & 1 & 1 & 1 \\
0 & 1 & 0 & 1 & 1 & 1 \\
0 & 1 & 1 & 1 & 2 & 1 \\
1 & 1 & 1 & 2 & 2 & 1 \\
0 & 1 & 1 & 1 & 1 & 0
\end{array}\right]&
N_5&=\left[\begin{array}{cccccc}
0 & 0 & 0 & 0 & 0 & 1 \\
0 & 0 & 0 & 0 & 1 & 1 \\
0 & 0 & 1 & 0 & 1 & 0 \\
0 & 0 & 0 & 1 & 1 & 1 \\
0 & 1 & 1 & 1 & 1 & 0 \\
1 & 1 & 0 & 1 & 0 & 0
\end{array}\right]
\end{align}
The Frobenius-Perron eigenvalues of $N_0,N_1,N_2,N_3,N_4,N_5$ lie in the cyclotomic field $\mathbb{Q}(\zeta_9)^+$, the real cubic subfield of $\mathbb{Q}(\zeta_9)$ where $\zeta_n:=\exp(2\pi i/n)$ for $n\in\mathbb{Z}_{\geq1}$.  One can also describe $\mathbb{Q}(\zeta_9)^+$ as the splitting field of $x^3-3x-1$, whose maximal real root is $a:=2\cos(\pi/9)$.  The set $\{1,a,a^2\}$ is then an integral basis for $\mathcal{O}_{\mathbb{Q}(\zeta_9)^+}$, its ring of algebraic integers.  A multiplicative generating set for the unit group $\mathcal{O}^\times_{\mathbb{Q}(\zeta_9)^+}$, which exists by Dirichlet's unit theorem, is $\pm1$ along with the pair $u_1:=a^2-2=\zeta_9-\zeta_9^2-\zeta_9^5$ and $\tilde{u_2}:=u_1-a$.  This pair is not uniquely determined but is the pair computed, for instance, in \cite{MR866105}.  We will replace $\tilde{u_2}$ with $u_2:=-\tilde{u_2}^{-1}=1-\zeta_9^4-\zeta_9^5$ without loss of generality so that $u_1$ and $u_2$ are both $\geq1$.  With this notation, $\mathrm{FPdim}(X_0)=1$, $\mathrm{FPdim}(X_j)=u_2$ for $j=1,2,5$, $\mathrm{FPdim}(X_3)=u_1u_2$, and $\mathrm{FPdim}(X_4)=u_1^{-1}u_2^2$.  Their total sum of squares is $\mathrm{FPdim}(\mathcal{R})=9u_2^2$.

\end{example}

\begin{proposition}\label{brayded}
There does not exist a pseudounitary braided fusion category $\mathcal{C}$ with $K(\mathcal{C})=K(\mathcal{R})$.
\end{proposition}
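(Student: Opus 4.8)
The plan is to reduce to the modular case and then obstruct the modular data. Suppose $\mathcal{C}$ is a pseudounitary braided fusion category with $K(\mathcal{C})=K(\mathcal{R})$. Braidedness makes $K(\mathcal{C})$ commutative, and pseudounitarity endows $\mathcal{C}$ with its canonical spherical structure, so $\mathcal{C}$ is premodular with $\dim(X)=\mathrm{FPdim}(X)$ for all $X\in\mathcal{O}(\mathcal{C})$. The M\"uger (symmetric) center of $\mathcal{C}$ is a symmetric fusion subcategory, and by Deligne's theorem its simple objects have positive integer Frobenius--Perron dimension; but every noninvertible simple of $K(\mathcal{R})$ has dimension $u_2$, $u_1u_2$, or $u_1^{-1}u_2^2$, none of which is rational. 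Hence the M\"uger center is trivial and $\mathcal{C}$ is a pseudounitary modular tensor category.

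Next I would pin down its modular data. The commutative ring $K(\mathcal{R})$ has exactly six characters $K(\mathcal{R})\to\mathbb{C}$; reading off the eigenvalues of the $N_i$ shows that three of them --- $\mathrm{FPdim}$ and its two Galois conjugates --- take values in $\mathbb{Q}(\zeta_9)^+$, with respective formal codegrees $9u_2^2$, $9u_1^2u_2^{-2}$, $9u_1^{-2}$, while the remaining three each have formal codegree $9$. Since $\mathcal{C}$ is modular, the normalized $S$-matrix is symmetric and unitary with $S_{\mathbbm{1}X}/S_{\mathbbm{1}\mathbbm{1}}=\mathrm{FPdim}(X)$, and by the Verlinde formula its entries are the products $\chi(X)\mathrm{FPdim}(X')$ for a bijection between $\mathcal{O}(\mathcal{C})$ and the set of characters; the identity $\dim(\mathcal{C})/\dim(X')^2=9u_2^2\,\mathrm{FPdim}(X')^{-2}$ matches formal codegrees and pins this bijection down to a threefold ambiguity among $X_1,X_2,X_5$ (resolved by symmetry of $S$). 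In particular all entries of $S$ lie in $\mathbb{Q}(\zeta_9)^+$, $\dim(\mathcal{C})=9u_2^2$, and $\sqrt{\dim(\mathcal{C})}=3u_2$.

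At this point the $S$-matrix data is entirely consistent with pseudounitarity, so the contradiction must involve the ribbon twist $T=\mathrm{diag}(\theta_X)$. The twists are roots of unity (Anderson--Moore--Vafa), and since $\dim(\mathcal{C})=9u_2^2$ has norm $3^6$, the Cauchy theorem forces the order of $T$ to be a power of $3$. The Galois symmetry of modular data (Coste--Gannon, de Boer--Goeree) shows the generator of $\mathrm{Gal}(\mathbb{Q}(\zeta_9)^+/\mathbb{Q})$ acts on $\mathcal{O}(\mathcal{C})$ as a product of two $3$-cycles carrying $\mathbbm{1}$ to $X_4$; iterating the companion Galois symmetry of $T$ around these $3$-cycles bounds the order of each $\theta_X$, so the twists are in fact ninth roots of unity, with those of $X_2,X_3,X_5$ determined by those of $X_1$ and $X_4$. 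Only finitely many candidate $T$ remain, and a direct computation in $\mathbb{Q}(\zeta_9)$ (with SageMath) shows that none of them satisfies the modular relation $(ST)^3=\tau^+S^2$ with $\tau^+=\sum_X\theta_X\dim(X)^2$ and $|\tau^+|^2=\dim(\mathcal{C})$. This contradiction proves the proposition.

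I expect the last step to be the main obstacle. The reductions above leave a short, explicit list of twist matrices, but eliminating each requires the Gauss-sum arithmetic in $\mathbb{Q}(\zeta_9)$; the subtle point is that unitarity, symmetry, and all formal-codegree relations are satisfied by the $S$-matrix alone, so the obstruction lives entirely in the joint $(S,T)$-data and cannot be detected from the fusion rules or the dimensions in isolation.
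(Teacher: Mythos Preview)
Your outline matches the paper's proof closely: reduce to the modular case via triviality of the symmetric center, use the Cauchy theorem to force the twists to be $3$-power roots of unity, use the Galois action on the modular data to cut down to a finite list, and finish with a computer check in $\mathbb{Q}(\zeta_9)$. Two points deserve tightening.

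First, the Galois action. From dimensions alone one gets only the $3$-cycle $\{X_0,X_3,X_4\}$; the action on $\{X_1,X_2,X_5\}$ is \emph{not} a priori a $3$-cycle. The paper handles this by a case split: either the normalized twists $t_1,t_2,t_5$ form a Galois orbit of primitive ninth roots (six cases), or all three are third roots of unity ($27$ cases). Together with the nine choices for $t_0$ this gives $297$ candidate twist tuples, not the smaller list your ``two $3$-cycles'' description would suggest.

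Second, the final obstruction. The Gauss-sum condition $|\tau^+|^2=\dim(\mathcal{C})$ does \emph{not} kill everything: exactly two twist tuples survive it (namely $\theta_0=1$, $\theta_1=\theta_2=\theta_5=\zeta_9$, $\theta_3=\zeta_3$, $\theta_4=\zeta_3^2$, and its complex conjugate). The paper then eliminates these not via $(ST)^3$ but by computing the balancing-equation $S$-matrix from each surviving $T$ and checking the Verlinde formula directly, finding $(N_1)_{1,1}=1\neq 0$. Your proposed $(ST)^3$ check against the character-determined $S$ is a legitimate alternative obstruction, but you have not carried it out, and since two candidates pass the Gauss-sum filter this is exactly where the content lies.
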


\begin{proof}
This proof is a finite computation; refer to Appendix \ref{a:1} for the SageMath \cite{sagemath} code used.  Assume there exists a pseudounitary braided fusion category $\mathcal{C}$ with $K(\mathcal{C})=K(\mathcal{R})$ whose simple objects will be indexed $X_0,\ldots,X_5$ as in Example \ref{one}.  We may assume $\mathcal{C}$ is equipped with the unique spherical structure such that $d_j:=\dim(X_j)=\mathrm{FPdim}(X_j)$ for $0\leq j\leq 5$ \cite[Proposition  8.23]{ENO}.  The symmetric center of $\mathcal{C}$ is integral \cite[Corollary 9.9.11]{tcat} hence trivial because the tensor unit is the only simple object of integer dimension.  Therefore $\mathcal{C}$ is modular.  Let $\sigma\in\mathrm{Gal}(\overline{\mathbb{Q}}/\mathbb{Q})$ be such that $\sigma(\cos(\pi/9))=\cos(5\pi/9)$.  Then one can verify on the basis of dimension alone, that $\hat{\sigma}(X_0)=X_3$, $\hat{\sigma}(X_3)=X_4$, and $\hat{\sigma}(X_4)=X_0$ where $\hat{\sigma}:\mathcal{O}(\mathcal{C})\to\mathcal{O}(\mathcal{C})$ is the permutation induced by the Galois action on the modular data of $\mathcal{C}$ \cite[Section 2.2.4]{paul}.
\par For each $X_j\in\mathcal{O}(\mathcal{C})$, the full twists $\theta_j:=\theta_{X_j}$ are roots of unity of order $3^a$ for some $a\in\mathbb{Z}_{\geq0}$ by \cite[Theorem 3.9]{paul}.  Hence $\gamma$, any cube root of the multiplicative central charge $\xi(\mathcal{C})$, is a root of unity of order $3^b$ for some $b\in\mathbb{Z}_{\geq0}$ as well.   Thus each normalized twist $t_j:=\theta_j/\gamma$ lies in $\mathbb{Q}(\zeta_{3^c})$ for some $c\in\mathbb{Z}_{\geq0}$.  But each pair $0\leq j\leq5$ and $\tau\in\mathrm{Gal}(\overline{\mathbb{Q}}/\mathbb{Q})$ satisfies $\tau^2(t_j)=t_{\hat{\tau}(j)}$ \cite[Theorem II(iii)]{dong2015congruence}.  Therefore $t_j$ is a root of unity of order 1, 3, or 9 or else $X_j$ has nine or more Galois conjugates.  Indeed, the subgroup of squares in $\mathrm{Gal}(\mathbb{Q}(\zeta_{3^n})/\mathbb{Q})$ has order $3^{n-1}$.  Moreover, $\theta_j,t_j\in\mathbb{Q}(\zeta_9)$ for all $0\leq j\leq 5$, hence $\gamma=\theta_j/t_j\in\mathbb{Q}(\zeta_9)$ as well.  The number of potential six-tuples $t_0,\ldots,t_5$ is then small; this also dictates $\gamma$ and $\theta_j$ as $t_0^{-1}=\gamma$.  In particular, $t_0$ determines $t_3$ and $t_4$ by Galois conjugacy.  Independently, if $t_1$ is one of the 6 primitive ninth roots of unity, then $t_2$ and $t_5$ are determined by Galois conjugacy.  Or there are $3^3$ possibilities for which \emph{all} $t_1,t_2,t_5$ are third roots of unity.  For each of these $9(6+3^3)=297$ potential six-tuples of normalized twists, we must have $\tau^+(\mathcal{C})\tau^-(\mathcal{C})=9u_2^2$ \cite[Proposition 8.15.4]{tcat} where $\tau^\pm(\mathcal{C})$ are the Gauss sums of $\mathcal{C}$.  Exactly two of the potential six-tuples $t_0,\ldots,t_5$ satisfy this constraint, whose corresponding twists are $\theta_0=1$, $\theta_1=\theta_2=\theta_5=\zeta_9$, $\theta_3=\zeta_3$, and $\theta_4=\zeta_3^2$, or these corresponding twists under the automorphism $\zeta_9\mapsto\zeta_9^{-1}$.
\par We now construct the $6\times6$ (unnormalized) $S$-matrices for these solutions with the \emph{balancing equation} for premodular categories \cite[Proposition 8.13.8]{tcat} which states for each integer pair $0\leq i,j\leq5$, we have $S_{ij}=\theta_i^{-1}\theta_j^{-1}\sum_{k=0}^5(N_i)_{j,k}d_k\theta_k$.  If $\mathcal{C}$ exists, these $S$-matrices satisfy the \emph{Verlinde formula} \cite[Corollary 8.14.4]{tcat} which states that $(N_i)_{j,k}=(1/9)u_2^{-2}\sum_{\ell=0}^5S_{i\ell}S_{j\ell}S_{k\ell}S_{0\ell}^{-1}$ for all $0\leq i,j,k\leq5$ because $\mathcal{C}$ is modular.  But $0=(N_1)_{1,1}\neq(1/9)u_2^{-2}\sum_{\ell=0}^5S_{1\ell}S_{1\ell}S_{1\ell}S_{0\ell}^{-1}=1$ in both cases.
\end{proof}

There are not as many tools available to study a pseudounitary fusion category if it is not braided.  So we will prove our main result by passing to the Drinfeld center $\mathcal{Z}(\mathcal{C})$ \cite[Section 8.5]{tcat} which is a modular tensor category, and will again possess a Galois action on simple objects.  The number theory dictating this Galois action comes from the cyclotomic field $\mathbb{Q}(\zeta_9)^+$ where $\mathrm{FPdim}(\mathcal{R})=9u_2^2$ lies.

\begin{proposition}\label{last}
Let $\mathcal{C}$ be a pseudounitary fusion category.  If $\mathrm{FPdim}(\mathcal{C})=9u_2^2$, then for all noninvertible $X\in\mathcal{O}(\mathcal{Z}(\mathcal{C}))$, $\mathbb{Q}(\dim(X))=\mathbb{Q}(\mathrm{FPdim}(X))=\mathbb{Q}(\zeta_9)^+$.
\end{proposition}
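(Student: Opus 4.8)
The plan is to work inside the modular category $\mathcal{Z}(\mathcal{C})$, using the forgetful functor $F\colon\mathcal{Z}(\mathcal{C})\to\mathcal{C}$ to pass between dimensions and the arithmetic of the cubic field $\mathbb{Q}(\zeta_9)^+$. First the reductions: since $\dim\mathcal{Z}(\mathcal{C})=(\dim\mathcal{C})^2$ while $\mathrm{FPdim}\,\mathcal{Z}(\mathcal{C})=(\mathrm{FPdim}\,\mathcal{C})^2$, pseudounitarity of $\mathcal{C}$ forces pseudounitarity of $\mathcal{Z}(\mathcal{C})$, and equipping $\mathcal{C}$ and $\mathcal{Z}(\mathcal{C})$ with their canonical spherical structures we may assume $\dim=\mathrm{FPdim}$ on both. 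The two claimed equalities of fields then reduce to the single assertion that $\mathrm{FPdim}(X)$ generates $\mathbb{Q}(\zeta_9)^+$ for every noninvertible $X\in\mathcal{O}(\mathcal{Z}(\mathcal{C}))$; and since $[\mathbb{Q}(\zeta_9)^+:\mathbb{Q}]=3$ is prime, so that there are no proper intermediate fields, it is enough to prove $\mathbb{Q}(\mathrm{FPdim}(X))\subseteq\mathbb{Q}(\zeta_9)^+$ together with $\mathrm{FPdim}(X)\notin\mathbb{Q}$.

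For the inclusion, $F$ is a tensor functor, so $\mathrm{FPdim}(X)=\mathrm{FPdim}(F(X))$ is a nonnegative-integer combination of the dimensions $\mathrm{FPdim}(Y)$, $Y\in\mathcal{O}(\mathcal{C})$; these lie in $\mathbb{Q}(\zeta_9)^+$ (recorded in Example~\ref{one} for $\mathcal{C}\simeq\mathcal{R}$, and in general constrained by the fact that they are $d$-numbers dividing $\mathrm{FPdim}\,\mathcal{C}=9u_2^2$ with $\sum_{Y}\mathrm{FPdim}(Y)^2=9u_2^2$), whence $\mathrm{FPdim}(X)\in\mathbb{Q}(\zeta_9)^+$. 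Moreover I would establish the sharper statement that the only nonnegative-integer combination of $\{\mathrm{FPdim}(Y):Y\in\mathcal{O}(\mathcal{C})\}$ lying in $\mathbb{Q}$ is a multiple of $\mathrm{FPdim}(\mathbbm 1)=1$: for $\mathcal{C}\simeq\mathcal{R}$ the fusion rules give $X_1^{\otimes 2}\cong X_0\oplus X_2\oplus X_3$ and $X_1\otimes X_2\cong X_1\oplus X_4$, hence $\mathrm{FPdim}(X_3)=u_2^2-u_2-1$ and $\mathrm{FPdim}(X_4)=u_2^2-u_2$, and as $\{1,u_2,u_2^2\}$ is a $\mathbb{Q}$-basis of $\mathbb{Q}(\zeta_9)^+$ the vanishing of the $u_2$- and $u_2^2$-coordinates of such a combination kills every coefficient but that of $X_0$. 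Consequently, if $\mathrm{FPdim}(X)=n$ is rational, hence a positive integer since dimensions are algebraic integers, then $F(X)\cong\mathbbm 1^{\oplus n}$.

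It remains to rule out a noninvertible $X$ with $\mathrm{FPdim}(X)=n\ge 2$. By the above, $F(X)\cong\mathbbm 1^{\oplus n}$; but a simple object of $\mathcal{Z}(\mathcal{C})$ whose underlying object in $\mathcal{C}$ is $\mathbbm 1^{\oplus k}$ amounts to a half-braiding on $\mathbbm 1^{\oplus k}$, i.e. a $k$-dimensional representation of the finite abelian group $\mathrm{Aut}^{\otimes}(\mathrm{Id}_{\mathcal{C}})$ of tensor automorphisms of the identity functor, so simplicity forces $k=1$ and hence $n=1$, a contradiction. Therefore no noninvertible simple object of $\mathcal{Z}(\mathcal{C})$ has rational dimension, proving the proposition. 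The reduction, the forgetful-functor inclusion, and this last step (simple objects of $\mathcal{Z}(\mathcal{C})$ whose image under $F$ is a power of $\mathbbm 1_{\mathcal{C}}$ are invertible) are robust; the step I expect to be the real obstacle, especially for an arbitrary pseudounitary $\mathcal{C}$ rather than $\mathcal{R}$ itself, is the arithmetic input that the simple dimensions of $\mathcal{C}$ lie in $\mathbb{Q}(\zeta_9)^+$ and are $\mathbb{Q}$-linearly independent away from $\mathbbm 1$, which is where the $d$-number analysis of the type carried out in Section~\ref{nums} must be invoked.
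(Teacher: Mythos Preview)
Your final step contains a genuine error. You assert that a half-braiding on $\mathbbm{1}^{\oplus k}$ is the same datum as a $k$-dimensional representation of the abelian group $\mathrm{Aut}^{\otimes}(\mathrm{Id}_{\mathcal{C}})$, and hence that simplicity forces $k=1$. This is false. The full subcategory of $\mathcal{Z}(\mathcal{C})$ consisting of objects whose image under $F$ lies in $\langle\mathbbm{1}\rangle$ is a Tannakian category $\mathrm{Rep}(H)$ for some finite group $H$ that need not be abelian and is not $\mathrm{Aut}^{\otimes}(\mathrm{Id}_{\mathcal{C}})$. The standard counterexample is $\mathcal{C}=\mathrm{Vec}_G$ with $G$ nonabelian: here $\mathrm{Aut}^{\otimes}(\mathrm{Id}_{\mathcal{C}})\cong\widehat{G}$, but the simple objects of $\mathcal{Z}(\mathrm{Vec}_G)$ lying over powers of $\mathbbm{1}$ form $\mathrm{Rep}(G)$, and any irreducible $V$ with $\dim V>1$ gives a noninvertible simple with $F(V)\cong\mathbbm{1}^{\oplus\dim V}$. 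So even granting your reduction $F(X)\cong\mathbbm{1}^{\oplus n}$ (which, as you note, already requires knowing the simple dimensions of $\mathcal{C}$ and is not available under the bare hypothesis $\mathrm{FPdim}(\mathcal{C})=9u_2^2$), you cannot conclude $n=1$ without further input bounding $|H|$.

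The paper avoids both issues by working entirely inside $\mathcal{Z}(\mathcal{C})$ and never invoking $F$. It first uses the odd order of $\mathcal{O}(\mathcal{Z}(\mathcal{C})_{\mathrm{pt}})$ and a dimensional-grading result to get $\dim(X)=\pm\mathrm{FPdim}(X)\in\mathbb{Q}(\zeta_9)^+$ directly, so the cubic-field dichotomy applies. Then, to exclude noninvertible simples of integer dimension, it lets $\mathcal{D}\subset\mathcal{Z}(\mathcal{C})$ be the subcategory of integer-dimensional objects; $\dim(\mathcal{D})$ divides $\dim(\mathcal{Z}(\mathcal{C}))=81u_2^4$ and is an integer, hence a power of $3$ at most $81$, while $\dim(X)^2\mid\dim(\mathcal{Z}(\mathcal{C}))$ forces $\dim(X)^2\in\{1,9,81\}$. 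If some simple has $\dim(X)^2\geq 9$ then $\dim(\mathcal{D})\geq 27$, whence by M\"uger's centralizer formula the relative centralizer has global dimension $3u_2^4$ or $u_2^4$, neither of which is totally $\geq 1$; this contradiction gives the conclusion. That argument is what replaces your half-braiding step, and it is exactly the missing bound on the ``group'' $H$ you would have needed.
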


\begin{proof}
The category $\mathcal{Z}(\mathcal{C})$ is pseudounitary \cite[Remark 2.35]{DGNO} since $\mathcal{C}$ is.  The universal grading group of $\mathcal{Z}(\mathcal{C})$ is isomorphic to $\mathcal{O}(\mathcal{Z}(\mathcal{C})_\mathrm{pt})$ \cite[Theorem 6.3]{nilgelaki}, which has odd rank by \cite[Proposition 8.15]{ENO}.  Therefore the dimensional grading of $\mathcal{Z}(\mathcal{C})$ is trivial by \cite[Proposition 1.9]{2019arXiv191212260G}, i.e. \!$\dim(X)=\pm\mathrm{FPdim}(X)\in\mathbb{Q}(\zeta_9)^+$  for all $X\in\mathcal{O}(\mathcal{Z}(\mathcal{C}))$.  But $\mathbb{Q}(\zeta_9)^+$ is cubic, hence $\dim(X)\in\mathbb{Z}$ or $\mathbb{Q}(\dim(X))=\mathbb{Q}(\zeta_9)^+$ for all $X\in\mathcal{O}(\mathcal{Z}(\mathcal{C}))$.  We claim $\mathcal{Z}(\mathcal{C})$ has no noninvertible simple objects of integer dimension.  Indeed by \cite[Proposition 8.15]{ENO}, the fusion subcategory $\mathcal{D}\subset\mathcal{Z}(\mathcal{C})$ consisting of all objects of integer (Frobenius-Perron) dimension must have $\dim(\mathcal{D})\in\{1,3,3^2,3^3,3^4\}$ and any $X\in\mathcal{O}(\mathcal{D})$ must satisfy $\dim(X)^2\in\{1,3^2,3^4\}$ by \cite[Proposition 8.14.6]{tcat}.   But if there exists a simple object $X$ with squared integer dimension $\dim(X)^2\geq3^2$, then $\dim(\mathcal{D})\geq3^3$ because the tensor unit exists.  Hence $\dim(\mathcal{Z}(\mathcal{C}))/\dim(\mathcal{D})$, the global dimension of the relative centralizer of $\mathcal{D}$ in $\mathcal{Z}(\mathcal{C})$ \cite[Theorem 3.2(ii)]{mug1}, is either $3u_2^4$ or $u_2^4$.  But neither is totally greater than or equal to 1, violating \cite[Remark 2.5]{ENO}, so all simple objects of integer dimension in $\mathcal{Z}(\mathcal{C})$ are invertible.
\end{proof}

%%%%%%%%%%%%%%%%%%%%%%%%%
%%%%%%%%%%%%%%%%%%%%%
%%%%%%%%%%%%%%%%%%%%%%%%%

\section{Algebraic $d$-numbers in $\mathbb{Q}(\zeta_9)^+$ }\label{nums}

Here we classify totally positive algebraic $d$-numbers in $\mathbb{Q}(\zeta_9)^+$ of norm $3^n$ for $n\in\mathbb{Z}_{\geq0}$.  An \emph{algebraic $d$-number} is a non-zero algebraic integer $\alpha$ such that $\alpha/\sigma(\alpha)$ is an algebraic unit for all $\sigma\in\mathrm{Gal}(\overline{\mathbb{Q}}/\mathbb{Q})$ \cite[Lemma 2.7]{codegrees} where $\overline{\mathbb{Q}}$ is the algebraic closure of $\mathbb{Q}$.  If $\alpha$ is a cyclotomic algebraic $d$-number, then by the proof of \cite[Lemma 2.7(iv)]{codegrees},
\begin{equation}\label{def}
\alpha^{[\mathbb{Q}(\alpha):\mathbb{Q}]}=N_{\mathbb{Q}(\alpha)/\mathbb{Q}}(\alpha)u
\end{equation}
for some $u\in\mathcal{O}_{\mathbb{Q}(\alpha)}^\times$ where $N_{\mathbb{Q}(\alpha)/\mathbb{Q}}(\alpha):=\prod_{\sigma\in\mathrm{Gal}(\mathbb{Q}(\alpha)/\mathbb{Q})}\sigma(\alpha)$.  The Galois group $\mathrm{Gal}(\mathbb{Q}(\zeta_9)^+/\mathbb{Q})$ is cyclic of order 3, generated by $\sigma$ which acts by $\sigma(\cos(\pi/9))=\cos(5\pi/9)$, hence
\begin{equation}
\sigma(u_1)=-u_1^{-1}u_2,\qquad\text{and}\qquad\sigma(u_2)=u_1^{-1}.\label{fr}
\end{equation}

%%%%%%%%%%

\subsection{Totally positive of norm $3^n$ for $n\in\mathbb{Z}_{\geq0}$}\label{totpos1}

\begin{lemma}\label{totpos}
Any totally positive algebraic unit in $\mathbb{Q}(\zeta_9)^+$ is equal to $(u_1^au_2^b)^2$ for some $a,b\in\mathbb{Z}$.
\end{lemma}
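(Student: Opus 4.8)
The plan is to combine Dirichlet's unit theorem with a signature count. As recorded in Example~\ref{one}, $\mathbb{Q}(\zeta_9)^+$ is a totally real cubic field with unit group $\mathcal{O}^\times := \mathcal{O}^\times_{\mathbb{Q}(\zeta_9)^+} = \langle -1\rangle\times\langle u_1\rangle\times\langle u_2\rangle \cong \mathbb{Z}/2\mathbb{Z}\times\mathbb{Z}^2$. Hence the subgroup of squares $(\mathcal{O}^\times)^2 = \{u_1^{2a}u_2^{2b} : a,b\in\mathbb{Z}\} = \{(u_1^au_2^b)^2 : a,b\in\mathbb{Z}\}$ has index $2\cdot4 = 2^3$ in $\mathcal{O}^\times$. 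Since the field is totally real, a unit is totally positive precisely when it lies in the kernel of the signature homomorphism $s\colon\mathcal{O}^\times\to(\mathbb{Z}/2\mathbb{Z})^3$ recording, for each of the three real embeddings, whether the image is a positive or negative real number. Every square is totally positive, so $(\mathcal{O}^\times)^2\subseteq\ker s$; therefore it suffices to show $s$ is surjective, for then $[\mathcal{O}^\times:\ker s] = 2^3 = [\mathcal{O}^\times:(\mathcal{O}^\times)^2]$ forces $\ker s = (\mathcal{O}^\times)^2$, which is the assertion.

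To see that $s$ is onto, compute it on the three generators. Let $\iota$ denote the embedding under which $u_1$ and $u_2$ are the real numbers $\geq1$ of Example~\ref{one}, so $\iota(u_1),\iota(u_2)>0$; the three real embeddings are then $\iota$, $\iota\circ\sigma$, $\iota\circ\sigma^2$ with $\sigma$ the generator of $\mathrm{Gal}(\mathbb{Q}(\zeta_9)^+/\mathbb{Q})$ appearing in (\ref{fr}). From (\ref{fr}) we have $\sigma(u_1) = -u_1^{-1}u_2$ and $\sigma(u_2) = u_1^{-1}$, and applying $\sigma$ once more, $\sigma^2(u_1) = u_2^{-1}$ and $\sigma^2(u_2) = -u_1u_2^{-1}$. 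Evaluating under $\iota$ and using $\iota(u_1),\iota(u_2)>0$, the sign patterns across $(\iota,\iota\circ\sigma,\iota\circ\sigma^2)$ are $(+,-,+)$ for $u_1$, $(+,+,-)$ for $u_2$, and $(-,-,-)$ for $-1$; that is, $s(u_1) = (0,1,0)$, $s(u_2) = (0,0,1)$, $s(-1) = (1,1,1)$ in $(\mathbb{Z}/2\mathbb{Z})^3$. Since $(1,1,1)+(0,1,0)+(0,0,1) = (1,0,0)$, these three vectors span $(\mathbb{Z}/2\mathbb{Z})^3$, so $s$ is surjective and we are done.

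The argument is short, and the only delicate point is the bookkeeping of the Galois action: one must read ``$\sigma(u)$ is negative'' as $\iota(\sigma(u))<0$, equivalently ``$u$ is negative under the embedding $\iota\circ\sigma$'', and not the other way around. A convenient consistency check is that $N_{\mathbb{Q}(\zeta_9)^+/\mathbb{Q}}(u_1) = N_{\mathbb{Q}(\zeta_9)^+/\mathbb{Q}}(u_2) = -1$, which says exactly that each of $u_1,u_2$ has an odd number of negative conjugates, matching the computed patterns $(+,-,+)$ and $(+,+,-)$. No class field theory is needed here; the unit rank is small enough that surjectivity of $s$ reduces to a one-line linear-algebra computation over $\mathbb{Z}/2\mathbb{Z}$.
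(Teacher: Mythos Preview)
Your proof is correct and is essentially the same as the paper's: both compute the signs of $\sigma^j(u_i)$ from the relations~(\ref{fr}) and conclude that total positivity forces the exponents of $-1,u_1,u_2$ to be even. You package this computation as surjectivity of the signature map together with an index count, whereas the paper reads off the three parity conditions $\delta\equiv0$, $\delta+x\equiv0$, $\delta-y\equiv0$ directly; the underlying arithmetic is identical.
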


\begin{proof}
Any algebraic unit $u\in\mathbb{Q}(\zeta_9)^+$ is of the form $u=(-1)^\delta u_1^xu_2^y$ for some $x,y\in\mathbb{Z}$ and $\delta=0,1$.  From (\ref{fr}), the generator of the Galois group $\sigma\in\mathrm{Gal}(\mathbb{Q}(\zeta_9)^+/\mathbb{Q})$ acts on $u$ by
\begin{align}
\sigma(u)&=\sigma((-1)^\delta u_1^x u_2^y)=(-1)^{\delta+x}u_1^{-x-y}u_2^x,\text{ and} \\
\sigma^2(u)&=\sigma((-1)^{\delta+x}u_1^{-x-y}u_2^x)=(-1)^{\delta-y}u_1^yu_2^{-x-y}.
\end{align}
Both $u_1,u_2$ are positive, so for a unit $u\in\mathbb{Q}(\zeta_9)^+$ to be totally positive, we must have $\delta=0$, $x\equiv0\pmod{2}$, and $-y\equiv 0\pmod{2}$, implying our claim.
\end{proof}

\begin{lemma}\label{norm3}
Denote $\beta:=\sqrt[3]{3u_1^2u_2^2}=2-\zeta_9^4-\zeta_9^5$.  If $\alpha\in\mathbb{Q}(\zeta_9)^+$ is an algebraic $d$-number with $N_{\mathbb{Q}(\zeta_9)^+/\mathbb{Q}}(\alpha)=3$, then there exists $u\in\mathcal{O}_{\mathbb{Q}(\zeta_9)^+}^\times$ such that $\alpha=u\beta$.
\end{lemma}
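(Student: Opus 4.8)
The plan is to use the defining relation \eqref{def} for cyclotomic algebraic $d$-numbers together with the classification of totally positive units in Lemma \ref{totpos}. First I would apply \eqref{def} with $\mathbb{Q}(\alpha)=\mathbb{Q}(\zeta_9)^+$, which is cubic, to write $\alpha^3 = N_{\mathbb{Q}(\zeta_9)^+/\mathbb{Q}}(\alpha)\,u = 3u$ for some unit $u\in\mathcal{O}_{\mathbb{Q}(\zeta_9)^+}^\times$; here one should first check that $\alpha$ generates the whole cubic field (if $\alpha$ were rational, its norm would be a cube, contradicting $N(\alpha)=3$, so indeed $\mathbb{Q}(\alpha)=\mathbb{Q}(\zeta_9)^+$). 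So $\alpha^3 = 3u$.

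Next I would pin down $u$ up to squares. Since $\alpha$ is an algebraic $d$-number with norm $3>0$ and $\mathbb{Q}(\zeta_9)^+$ is totally real, all conjugates of $\alpha$ are real; because $\alpha^3 = 3u$ forces $\alpha$ and all its conjugates to be real cube roots (hence real and of the same sign as the corresponding conjugate of $u$ times $3$), one can arrange $\alpha>0$ in every embedding after possibly replacing $\alpha$ by $-\alpha$ — wait, more carefully: $\alpha^3$ and $\alpha$ have the same sign in each real embedding, so $\alpha$ is totally positive iff $3u$ is totally positive iff $u$ is totally positive. I would treat the totally positive case first. Applying Lemma \ref{totpos}, $u = (u_1^a u_2^b)^2$, hence $\alpha^3 = 3(u_1^a u_2^b)^2$. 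Comparing with $\beta^3 = 3u_1^2 u_2^2$ (which I would verify directly from $\beta = 2-\zeta_9^4-\zeta_9^5$, using the SageMath computation referenced in the paper, noting $u_1 = \zeta_9-\zeta_9^2-\zeta_9^5$ and $u_2 = 1-\zeta_9^4-\zeta_9^5$), we get $(\alpha/\beta)^3 = (u_1^{a-1}u_2^{b-1})^2$, so $\alpha/\beta$ is a cube root of a square of a unit lying in $\mathbb{Q}(\zeta_9)^+$; since a cube root of a unit that already lies in the field is again a unit (the ring of integers is integrally closed), $\alpha/\beta =: v \in \mathcal{O}_{\mathbb{Q}(\zeta_9)^+}^\times$, giving $\alpha = v\beta$ as claimed.

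It remains to rule out, or rather absorb, the case where $u$ (equivalently $\alpha$) is not totally positive. Here the statement as phrased already assumes $\alpha$ generates $\mathbb{Q}(\zeta_9)^+$ but does not assume total positivity — so I would argue that the cyclic order-$3$ Galois action constrains the sign pattern: the three real embeddings are permuted cyclically by $\sigma$, and $N(\alpha) = \alpha\,\sigma(\alpha)\,\sigma^2(\alpha) = 3 > 0$ means the number of negative embeddings of $\alpha$ is even, i.e.\ zero. Hence $\alpha$ is automatically totally positive and there is no extra case. (If instead the lemma is meant to allow replacing $\alpha$ by a unit multiple, the same conclusion follows since $\beta$ is totally positive.) The main obstacle I anticipate is not conceptual but computational bookkeeping: verifying the explicit identity $\beta^3 = 3u_1^2u_2^2$ and the expression $\beta = 2-\zeta_9^4-\zeta_9^5$ in terms of the chosen integral basis $\{1,a,a^2\}$ and the unit generators $u_1,u_2$, which is exactly the kind of arithmetic in $\mathbb{Q}(\zeta_9)$ the paper delegates to SageMath in Appendix \ref{sage}; everything else is a short application of \eqref{def}, Lemma \ref{totpos}, and integral closedness of $\mathcal{O}_{\mathbb{Q}(\zeta_9)^+}$.
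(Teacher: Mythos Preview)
Your approach is more conceptual than the paper's, which brute-forces the problem: it writes $\alpha = \sqrt[3]{3v}$ with $v$ a unit, reduces modulo cubes to $v = u_1^x u_2^y$ with $x,y \in \{-2,\ldots,2\}$, and checks directly which of these $25$ candidates lie in $\mathbb{Q}(\zeta_9)^+$ (four do, all unit multiples of $\beta$). Your idea of dividing by $\beta$ and invoking integral closedness avoids the case check entirely.

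However, there is a genuine error in your sign argument. You claim that $N(\alpha) = 3 > 0$ forces the number of negative real embeddings of $\alpha$ to be ``even, i.e.\ zero,'' but \emph{two} is also even and perfectly compatible with a positive norm in a cubic field. Concretely, $u_1 u_2\beta$ is a $d$-number of norm $3$ with sign pattern $(+,-,-)$ under the three embeddings, as one reads off from $\sigma(u_1) = -u_1^{-1}u_2$ and $\sigma(u_2) = u_1^{-1}$ in (\ref{fr}) together with the total positivity of $\beta$. So $\alpha$ need not be totally positive, and your appeal to Lemma~\ref{totpos} is unjustified as written.

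The fix is that the detour through Lemma~\ref{totpos} is unnecessary in the first place. Once you have $\alpha^3 = 3u$ and $\beta^3 = 3u_1^2u_2^2$, the quotient $(\alpha/\beta)^3 = u\,u_1^{-2}u_2^{-2}$ is already a unit in $\mathcal{O}_{\mathbb{Q}(\zeta_9)^+}$ with no positivity hypothesis required, and your integral-closedness step then yields $\alpha/\beta \in \mathcal{O}_{\mathbb{Q}(\zeta_9)^+}^\times$ directly. Delete the invocation of Lemma~\ref{totpos} and the entire sign paragraph, and your argument becomes both correct and shorter than the paper's.
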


\begin{proof}
We may assume that $\mathbb{Q}(\alpha)=\mathbb{Q}(\zeta_9)^+$ because for any integer $n$, $N_{\mathbb{Q}(\zeta_9)^+/\mathbb{Q}}(n)=n^3\neq3$.  If $\alpha\in\mathbb{Q}(\zeta_9)^+$ is an algebraic $d$-number with $N_{\mathbb{Q}(\zeta_9)^+/\mathbb{Q}}(\alpha)=3$, then $\alpha^3=3v$ for some $v\in\mathcal{O}_{\mathbb{Q}(\zeta_9)^+}^\times$ by (\ref{def}).  Hence $\alpha=\sqrt[3]{3v}$ and we aim to determine for which $v$, $\alpha\in\mathbb{Q}(\zeta_9)^+$.  Evidently, it suffices to consider the cases $v=u_1^xu_2^y$ where $x,y\in\{-2,-1,0,1,2\}$, otherwise one may remove cubic factors of the fundamental units from the radical.  Of these 25 cases, four pairs $(x,y)$ imply $\alpha\in\mathbb{Q}(\zeta_9)^+$: $(2,2)$, $(2,-1)$, $(-1,2)$, and $(-1,-1)$.  All four of these $\sqrt[3]{3u_1^xu_2^y}$ are unit multiples of $\beta:=\sqrt[3]{3u_1^2u_2^2}$.
\end{proof}

\begin{lemma}\label{norm9}
Denote $\beta:=\sqrt[3]{3u_1^2u_2^2}=2-\zeta_9^4-\zeta_9^5$.  If $\alpha\in\mathbb{Q}(\zeta_9)^+$ is an algebraic $d$-number with $N_{\mathbb{Q}(\zeta_9)^+/\mathbb{Q}}(\alpha)=9$, then there exists $u\in\mathcal{O}_{\mathbb{Q}(\zeta_9)^+}^\times$ such that $\alpha=u\beta^2$.
\end{lemma}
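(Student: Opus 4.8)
The plan is to reduce the norm-$9$ case to the norm-$3$ case (Lemma \ref{norm3}) by the same ``clear cubes from the radical'' strategy, and then read off the answer. As in the proof of Lemma \ref{norm3}, we may assume $\mathbb{Q}(\alpha)=\mathbb{Q}(\zeta_9)^+$, since $N_{\mathbb{Q}(\zeta_9)^+/\mathbb{Q}}(n)=n^3$ never equals $9$ for $n\in\mathbb{Z}$. Then (\ref{def}) gives $\alpha^3=9v=3^2v$ for some $v\in\mathcal{O}_{\mathbb{Q}(\zeta_9)^+}^\times$, so $\alpha=\sqrt[3]{3^2v}$. Multiplying inside the radical by the cube of a suitable unit, it suffices to consider $v=u_1^xu_2^y$ with $x,y\in\{-2,-1,0,1,2\}$; this reduces everything to a finite check of $25$ cases, exactly as in Lemma \ref{norm3}, and the SageMath routines in the appendix can certify which cases land in $\mathbb{Q}(\zeta_9)^+$.

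First I would observe that $\beta^2=\sqrt[3]{9u_1^4u_2^4}=\sqrt[3]{3^2\,u_1\cdot u_1^3\cdot u_2\cdot u_2^3}$, which (after extracting the cubes $u_1^3,u_2^3$) is a unit multiple of $\sqrt[3]{3^2u_1u_2}$; so $\beta^2$ itself is of the claimed radical form and lies in $\mathbb{Q}(\zeta_9)^+$. More structurally, if $\alpha$ is an algebraic $d$-number with $N_{\mathbb{Q}(\zeta_9)^+/\mathbb{Q}}(\alpha)=9$, then $\beta/\alpha$ satisfies $(\beta/\alpha)^3 = 3u_1^2u_2^2/(9v) = u_1^2u_2^2/(3v)$, which has norm $N(u_1^2u_2^2)/N(3v)=1/27$, i.e. $3/(\beta/\alpha)$ would be an algebraic integer of norm $3^3/3^{-1}$... so instead the cleaner route is: $\alpha\cdot\beta$ is an algebraic $d$-number (products of $d$-numbers are $d$-numbers by \cite[Lemma 2.7]{codegrees}) of norm $9\cdot3=27=3^3$, hence $\alpha\beta/3$ is an algebraic $d$-number of norm $1$, i.e. an algebraic unit by \cite[Lemma 2.7]{codegrees}. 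Thus $\alpha = 3u/\beta = 3u\beta^2/\beta^3 = 3u\beta^2/(3u_1^2u_2^2) = u(u_1u_2)^{-2}\beta^2$ for some unit $u$, which is precisely a unit multiple of $\beta^2$.

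To make that argument airtight I would just need to confirm that $\alpha\beta/3$ is genuinely an algebraic integer (equivalently that $3\mid N(\alpha\beta)$ with the right multiplicities in every prime above $3$), which follows because $3$ is totally ramified in $\mathbb{Q}(\zeta_9)^+$ — the unique prime $\mathfrak{p}$ above $3$ satisfies $\mathfrak{p}^3=(3)$ — and $\beta$ generates $\mathfrak{p}$ while $\alpha$ generates $\mathfrak{p}^2$ (since $v_\mathfrak{p}(\alpha^3)=v_\mathfrak{p}(3^2v)=6$, so $v_\mathfrak{p}(\alpha)=2$), giving $v_\mathfrak{p}(\alpha\beta)=3=v_\mathfrak{p}(3)$ and hence $\alpha\beta/3\in\mathcal{O}_{\mathbb{Q}(\zeta_9)^+}$ with valuation $0$ at $\mathfrak{p}$; combined with $N(\alpha\beta/3)=27/27=1$ and the $d$-number property, Lemma \ref{totpos}'s ambient unit-group description pins it down. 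The main obstacle I anticipate is purely bookkeeping: making sure the unit ambiguities are handled consistently (the norm condition only determines $\alpha$ up to a unit to begin with, and Lemma \ref{norm3}/Lemma \ref{totpos} already absorb such ambiguity), and verifying that among the $25$ radicands $\sqrt[3]{3^2u_1^xu_2^y}$ exactly the expected residue classes of $(x,y)$ give elements of $\mathbb{Q}(\zeta_9)^+$ — but this is a finite SageMath computation identical in spirit to the one already carried out for Lemma \ref{norm3}, so I expect no genuine difficulty, only care.
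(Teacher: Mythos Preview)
Your proposal is correct, and in fact your second and third paragraphs give a cleaner argument than the paper's. The paper simply repeats the brute-force strategy of Lemma~\ref{norm3}: write $\alpha^3=9v$, reduce to the $25$ radicands $\sqrt[3]{9u_1^xu_2^y}$ with $x,y\in\{-2,\ldots,2\}$, and check (by computer) that the four which land in $\mathbb{Q}(\zeta_9)^+$ are all unit multiples of $\beta^2$. Your first paragraph sets up exactly this, so at that level the approaches coincide.

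Where you diverge is the conceptual shortcut via ramification: since $3$ is totally ramified in $\mathbb{Q}(\zeta_9)^+$ with $(\beta)=\mathfrak{p}$ and $(3)=\mathfrak{p}^3$, the $d$-number equation $\alpha^3=9v$ forces $v_\mathfrak{p}(\alpha)=2$ and $v_\mathfrak{q}(\alpha)=0$ for $\mathfrak{q}\neq\mathfrak{p}$, so $\alpha\beta/3$ has all valuations zero and is therefore a unit; unwinding gives $\alpha=(u_1u_2)^{-2}u\,\beta^2$. This bypasses the $25$-case check entirely and explains \emph{why} the answer is $\beta^2$: because $(\alpha)=\mathfrak{p}^2=(\beta^2)$ as ideals. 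The paper's approach has the virtue of being uniform with Lemma~\ref{norm3} and trivially machine-verifiable; yours has the virtue of requiring no computation and making the structure transparent (indeed it would handle norm $3^n$ for any $n$ in one stroke, which is essentially what Proposition~\ref{furst} wants). Two minor remarks: the detour through ``$\alpha\beta/3$ is a $d$-number of norm $1$'' is unnecessary once you have the valuation argument, since all valuations vanishing already forces it to be a unit; and the reference to Lemma~\ref{totpos} at the end is not actually needed for this step.
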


\begin{proof}
This follows from an identical argument as the proof of Lemma \ref{norm3}.  There are four integer pairs $x,y$ such that $\sqrt[3]{9u_1^xu_2^y}\in\mathbb{Q}(\zeta_9)^+$, which are all unit multiples of $\beta^2$.
\end{proof}

\begin{proposition}\label{furst}
Denote $\beta:=\sqrt[3]{3u_1^2u_2^2}=2-\zeta_9^4-\zeta_9^5$.   Any totally positive algebraic $d$-number in $\mathbb{Q}(\zeta_9)^+$ of norm $3^n$ for $n\in\mathbb{Z}_{\geq0}$ has a factorization as $3^a(u_1^bu_2^c)^2\beta^d$ for some $a\in\mathbb{Z}_{\geq0}$, $b,c\in\mathbb{Z}$, and $d=0,1,2$.
\end{proposition}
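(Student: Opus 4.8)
The plan is to reduce the statement to the behaviour of the rational prime $3$ in $\mathcal{O}:=\mathcal{O}_{\mathbb{Q}(\zeta_9)^+}$ together with Lemma~\ref{totpos}. First I would record that $N_{\mathbb{Q}(\zeta_9)^+/\mathbb{Q}}(\beta)=3$: cubing $\beta^3=3u_1^2u_2^2$ gives $N_{\mathbb{Q}(\zeta_9)^+/\mathbb{Q}}(\beta)^3=N_{\mathbb{Q}(\zeta_9)^+/\mathbb{Q}}(3)\,N_{\mathbb{Q}(\zeta_9)^+/\mathbb{Q}}(u_1)^2N_{\mathbb{Q}(\zeta_9)^+/\mathbb{Q}}(u_2)^2=27$, and $\beta$ is totally positive (being the real cube root of the totally positive element $3u_1^2u_2^2$; for each real embedding $\tau$ one has $\tau(\beta)^3=3\tau(u_1)^2\tau(u_2)^2>0$), so $N_{\mathbb{Q}(\zeta_9)^+/\mathbb{Q}}(\beta)=3$. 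Since $3$ is a rational prime, the ideal $\mathfrak{p}:=\beta\mathcal{O}$ has prime norm and is therefore a prime ideal, and $(3)=(\beta)^3=\mathfrak{p}^3$ because $u_1,u_2\in\mathcal{O}^\times$. Any prime of $\mathcal{O}$ above $3$ divides $(3)=\mathfrak{p}^3$ and hence equals $\mathfrak{p}$, so $\mathfrak{p}$ is the unique prime of $\mathcal{O}$ above $3$.

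Next, let $\alpha$ be a totally positive algebraic $d$-number in $\mathbb{Q}(\zeta_9)^+$ with $N_{\mathbb{Q}(\zeta_9)^+/\mathbb{Q}}(\alpha)=3^n$. Then $\alpha\mathcal{O}$ is an integral ideal of norm $3^n$; writing it as a product of prime ideals, every factor must have norm a power of $3$, hence lie above $3$, hence equal $\mathfrak{p}$. Since $N(\mathfrak{p})=3$, unique factorization of ideals in the Dedekind domain $\mathcal{O}$ forces $\alpha\mathcal{O}=\mathfrak{p}^n=\beta^n\mathcal{O}$. Therefore $u:=\alpha\beta^{-n}\in\mathcal{O}^\times$, and $u$ is totally positive because $\alpha$ and $\beta^n$ both are. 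By Lemma~\ref{totpos}, $u=(u_1^{b'}u_2^{c'})^2$ for some $b',c'\in\mathbb{Z}$, so $\alpha=(u_1^{b'}u_2^{c'})^2\beta^n$.

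Finally I would write $n=3a+d$ with $a\in\mathbb{Z}_{\geq0}$ and $d\in\{0,1,2\}$ (possible since $n\geq0$), and expand using $\beta^3=3u_1^2u_2^2$:
\[
\beta^n=(\beta^3)^a\beta^d=(3u_1^2u_2^2)^a\beta^d=3^a(u_1^au_2^a)^2\beta^d .
\]
Combining with the previous paragraph, $\alpha=3^a\bigl(u_1^{a+b'}u_2^{a+c'}\bigr)^2\beta^d$, which is exactly the asserted factorization with $b=a+b'$ and $c=a+c'$. (Alternatively, once $\alpha\mathcal{O}=\mathfrak{p}^n$ is known one may divide $\alpha$ by $\beta^{3a}=(3u_1^2u_2^2)^a$ to obtain a totally positive algebraic $d$-number of norm $3^d$ and then quote Lemmas~\ref{norm3} and \ref{norm9} for $d=1,2$ and Lemma~\ref{totpos} for $d=0$; the conclusion is the same.)

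The only step with real content is the first paragraph—identifying that $3$ is totally ramified in $\mathbb{Q}(\zeta_9)^+$ and that $\beta$ generates the prime above it. The norm computation and the total positivity of $\beta$ are both elementary, so I do not anticipate a genuine obstacle there; everything afterwards is bookkeeping, since once $\alpha\mathcal{O}=\mathfrak{p}^n$ is pinned down all remaining ambiguity is a totally positive unit, which Lemma~\ref{totpos} reduces to a square of a monomial in $u_1,u_2$. The one point to keep in view is that $a\ge 0$ in the division by $3^a$ (equivalently $\beta^{3a}\mid\alpha$ in $\mathcal{O}$), which is automatic from $n\ge 0$.
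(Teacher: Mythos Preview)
Your argument is correct and takes a genuinely different route from the paper's. The paper invokes the $d$-number identity $\alpha^{3}=N_{\mathbb{Q}(\zeta_9)^+/\mathbb{Q}}(\alpha)\cdot u$ from (\ref{def}) to write $\alpha=\sqrt[3]{3^{n}u_1^{x}u_2^{y}}$, peels off powers of $3^{3}$, and then appeals to Lemmas~\ref{totpos}, \ref{norm3}, \ref{norm9}; the latter two are each a brute-force check of $25$ candidates for which $\sqrt[3]{3u_1^{x}u_2^{y}}$ (resp.\ $\sqrt[3]{9u_1^{x}u_2^{y}}$) lies in $\mathbb{Q}(\zeta_9)^+$. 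You instead establish that $3$ is totally ramified with $(\beta)$ the unique prime above it, so $(\alpha)=(\beta)^{n}$ directly from the norm condition, and the remaining totally positive unit is handled by Lemma~\ref{totpos} alone. This bypasses the case analysis of Lemmas~\ref{norm3} and \ref{norm9} entirely, and as a bonus your argument never uses the $d$-number hypothesis: any totally positive algebraic integer in $\mathbb{Q}(\zeta_9)^+$ of norm $3^{n}$ already has the stated form (and is therefore automatically a $d$-number). The paper's approach, by contrast, stays within the elementary $d$-number framework it has set up and avoids invoking ideal factorization in a Dedekind domain, at the cost of those finite computer checks.
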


\begin{proof}
We have $\alpha=\sqrt[3]{3^nu_1^xu_2^y}$ for some integer pair $x,y$, as above.  But factoring out powers of $3^3$, we are faced with the classification problem of Lemma \ref{totpos}, Lemma \ref{norm3}, or Lemma \ref{norm9} as $\beta$ is totally positive.
\end{proof}

%%%%%%%%

\subsection{Maximal among Galois conjugates}\label{max}

Recall $\beta:=\sqrt[3]{3u_1^2u_2^2}=2-\zeta_9^4-\zeta_9^5$ from Proposition \ref{furst}.  The Galois orbit of $\beta$ is $\{\beta,u_1^{-2}\beta,u_2^{-2}\beta\}$ (listed as $\beta$, $\sigma(\beta)$, $\sigma^2(\beta)$), therefore the Galois orbit of a totally positive $d$-number in $\mathbb{Q}(\zeta_9)^+$ of norm $3^n$ is
\begin{align}
\left\{3^a(u_1^bu_2^c)^2\beta^d,3^a(u_1^{-(b+c+d)}u_2^b)^2\beta^d,3^a(u_1^cu_2^{-(b+c+d)})^2\beta^d\right\}\label{ga}
\end{align}
by Proposition \ref{furst} for some $a\in\mathbb{Z}_{\geq0}$, $b,c\in\mathbb{Z}$, and $d=0,1,2$.  We will classify when $3^a(u_1^au_2^b)^2\beta^d$ is maximal among its Galois conjugates.  For this purpose, define $z:=\log(u_1)+\log(u_2)$,
\begin{align}
x_1:=&z^{-1}(\log(u_2)-2\log(u_1)),& y_{1,d}:=&dz^{-1}\log(u_1), \\
x_2:=&z^{-1}(\log(u_1)-2\log(u_2)),\text{ and}& y_{2,d}:=&dz^{-1}\log(u_2).
\end{align}
We have $3^a(u_1^bu_2^c)^2\beta^d$ maximal in its Galois orbit if and only if
\begin{align}
&&3^a(u_1^bu_2^c)^2\beta^d&\geq3^a(u_1^{-(b+c+d)}u_2^b)^2\beta^d \\
%\Leftrightarrow&&u_2^{c-b}&\geq u_1^{-(2b+c+d)} \\
\Leftrightarrow&&(c-b)\log(u_2)&\geq-(2b+c+d)\log(u_1) \\
%\Leftrightarrow&&c(\log(u_1)+\log(u_2))&\geq b(\log(u_2)-2\log(u_1))-d\log(u_1) \\
\Leftrightarrow&&cx_1^{-1}+y_{1,d}x_1^{-1}&\geq b,
\end{align}
as $x_1>0$, and by a symmetric argument, $b\geq cx_2-y_{2,d}$.
%\begin{align}
%&&3^a(u_1^bu_2^c)^2\beta^d&\geq3^a(u_1^cu_2^{-(b+c+d)})^2\beta^d \\
%\Leftrightarrow&&u_1^{b-c}&\geq u_2^{-(b+2c+d)} \\
%\Leftrightarrow&&(b-c)\log(u_1)&\geq-(b+2c+d)\log(u_2) \\
%\Leftrightarrow&&b(\log(u_1)+\log(u_2))&\geq c(\log(u_1)-2\log(u_2))-d\log(u_2) \\
%\Leftrightarrow&&b&\geq cX_2-Y_{2,d}.
%\end{align}
Note that $x_2-x_1^{-1}\neq0$, therefore
\begin{equation}
%&&cX_1^{-1}+Y_{1,d}X_1^{-1}&\geq cX_2-Y_{2,d} \\
c\geq(y_{2,d}+y_{1,d}x_1^{-1})/(x_2-x_1^{-1}).\label{inew}
\end{equation}
Moreover $c$ is only constrained by the three inequalities (one for each $d=0,1,2$) in (\ref{inew}).  The three values the right-hand side of (\ref{inew}) attains are $0,-1/3,-2/3$, hence $c\in\mathbb{Z}_{\geq0}$ in any case.

\begin{proposition}\label{ex}
The set of all totally positive algebraic $d$-numbers in $\mathbb{Q}(\zeta_9)^+$ of norm $3^n$ for some $n\in\mathbb{Z}_{\geq0}$ which are maximal amongst their Galois conjugates is
\begin{equation}
\left\{3^a(u_1^b u_2^c)^2\beta^d:a,c\in\mathbb{Z}_{\geq0},b\in\mathbb{Z},d=0,1,2,\textnormal{ and }cx_1^{-1}+y_{1,d}x_1^{-1}\geq b\geq cx_2-y_{2,d}\right\}.
\end{equation}\label{s}
\end{proposition}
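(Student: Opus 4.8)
The plan is to prove the asserted set equality by establishing the two inclusions; almost everything needed has already been worked out in the discussion preceding the statement, so the proof is mainly a matter of assembling those pieces and recording one last integrality constraint.

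For the inclusion of the left-hand set into the right-hand set, I would start from an arbitrary totally positive algebraic $d$-number $\alpha\in\mathbb{Q}(\zeta_9)^+$ of norm $3^n$ that is maximal among its Galois conjugates. By Proposition~\ref{furst}, $\alpha=3^a(u_1^bu_2^c)^2\beta^d$ for some $a\in\mathbb{Z}_{\geq0}$, $b,c\in\mathbb{Z}$, and $d\in\{0,1,2\}$, and its Galois orbit is the set~(\ref{ga}). Maximality means $\alpha$ dominates each of the two other elements of~(\ref{ga}), which by the chain of equivalences already displayed is equivalent to $cx_1^{-1}+y_{1,d}x_1^{-1}\geq b\geq cx_2-y_{2,d}$. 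It then remains only to record that $c\geq0$: combining the two inequalities and using $x_2-x_1^{-1}<0$ yields~(\ref{inew}), whose right-hand side simplifies to $-d/3\in\{0,-1/3,-2/3\}$, so the integer $c$ must be nonnegative. Hence $\alpha$ lies in the right-hand set.

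For the reverse inclusion, I would take $\alpha=3^a(u_1^bu_2^c)^2\beta^d$ with $a,c\in\mathbb{Z}_{\geq0}$, $b\in\mathbb{Z}$, $d\in\{0,1,2\}$ satisfying $cx_1^{-1}+y_{1,d}x_1^{-1}\geq b\geq cx_2-y_{2,d}$, and verify the four required properties. Total positivity holds because $3$, the square $(u_1^bu_2^c)^2$, and $\beta$ (shown totally positive in the proof of Proposition~\ref{furst}) are each totally positive, and products of totally positive elements are totally positive. The algebraic $d$-number property holds because $3$, the unit $u_1^bu_2^c$, and $\beta$ are algebraic $d$-numbers, and the defining quotients $\alpha/\sigma(\alpha)$ are multiplicative, so a product of algebraic $d$-numbers is again one. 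Taking norms, $N_{\mathbb{Q}(\zeta_9)^+/\mathbb{Q}}(\alpha)=3^{3a}\,N_{\mathbb{Q}(\zeta_9)^+/\mathbb{Q}}(u_1^bu_2^c)^2\,3^d=3^{3a+d}$, so $\alpha$ has norm $3^n$ with $n=3a+d\in\mathbb{Z}_{\geq0}$. Finally, reading the displayed chain of equivalences backwards, the two hypothesized inequalities say exactly that $\alpha$ is $\geq$ both other elements of its orbit~(\ref{ga}), i.e.\ $\alpha$ is maximal among its Galois conjugates.

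The only place a genuine (but short) computation enters is the simplification, used in~(\ref{inew}), of $(y_{2,d}+y_{1,d}x_1^{-1})/(x_2-x_1^{-1})$ to $-d/3$: substituting the definitions of $x_1,x_2,y_{1,d},y_{2,d}$ in terms of $\log(u_1)$ and $\log(u_2)$, both numerator and denominator turn out to carry the common factor $\log(u_1)^2-\log(u_1)\log(u_2)+\log(u_2)^2$ over $z(\log(u_2)-2\log(u_1))$, with the denominator also carrying an extra factor $-3$, and these cancel identically, independent of the particular transcendental values. I expect no real obstacle here; the one point requiring care is the sign of $x_2-x_1^{-1}$, which is negative and is precisely what turns the combined inequality into the lower bound~(\ref{inew}) on $c$ rather than an upper bound.
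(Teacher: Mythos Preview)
Your proposal is correct and follows essentially the same approach as the paper: the paper presents Proposition~\ref{ex} as a summary of the computations in Section~\ref{max}, deriving the two inequalities on $b$ from maximality and then extracting $c\geq0$ via~(\ref{inew}), exactly as you do. Your write-up is slightly more thorough in explicitly verifying the reverse inclusion (total positivity, the $d$-number property, the norm, and maximality), which the paper leaves implicit, but there is no substantive difference in method.
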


%%%%%%%%%%%%%

\subsection{Perfect squares}\label{square}

We wish to determine when $\alpha$ taken from the set in Proposition \ref{ex} is a perfect square in $\mathbb{Q}(\zeta_9)^+$, which is to say $\sqrt{\alpha}\in\mathbb{Q}(\zeta_9)^+$. This occurs if and only if $\sqrt{3^a\beta^d}\in\mathbb{Q}(\zeta_9)^+$.  As this condition is satisfied up to square powers of $3$ and $\beta$, we need only check that $\sqrt{\beta}\not\in\mathbb{Q}(\zeta_9)^+$, $\sqrt{3}\not\in\mathbb{Q}(\zeta_9)^+$, and $u_1^{-1}u_2^{-1}\beta^2=\sqrt{3\beta}\in\mathbb{Q}(\zeta_9)^+$.    In other words, $\alpha$ is a perfect square if and only if $a\equiv d\pmod{2}$.
%%%%%%%%%%%%%%%%%%%
%%%%%%%%%%%%%%%%%%

\section{The basic data of $\mathcal{Z}(\mathcal{C})$}\label{sec:pos}

Assume $\mathcal{C}$ is a pseudounitary fusion category with $K(\mathcal{C})=K(\mathcal{R})$ and the simple objects of $\mathcal{C}$ are indexed $X_0,X_1,X_2,X_3,X_4,X_5$ as in Example \ref{one}.  With the results of Sections \ref{sec:fus} and \ref{nums}, we can determine the rank of $\mathcal{Z}(\mathcal{C})$ and the Frobenius-Perron/categorical dimensions of $X\in\mathcal{O}(\mathcal{Z}(\mathcal{C}))$ by computing their image under the forgetful functor $F:\mathcal{Z}(\mathcal{C})\to\mathcal{C}$.  To prevent confusion with the Galois orbits of simple objects in $\mathcal{Z}(\mathcal{C})$, we will name the set $\{\dim(\hat{\sigma}(X)):\sigma\in\mathrm{Gal}(\overline{\mathbb{Q}}/\mathbb{Q})\}$ the \emph{dimensional Galois orbit} of $X\in\mathcal{O}(\mathcal{Z}(\mathcal{C}))$.  The proof of Proposition \ref{dimensions} is relegated to Sections \ref{sec:frob}--\ref{sec:tech}.

\begin{proposition}\label{dimensions}
Let $\mathcal{C}$ be a pseudounitary fusion category with $K(\mathcal{C})=K(\mathcal{R})$.  There is a one-to-one correspondence between $X\in\mathcal{O}(\mathcal{Z}(\mathcal{C}))$ and the columns of Figure \ref{fig:W}.  The column corresponding to $X$ describes $F(X)$, where $F:\mathcal{Z}(\mathcal{C})\to\mathcal{C}$ is the forgetful functor.  The columns of Figure \ref{fig:W} are indexed by the dimensional Galois orbits in Figure \ref{fig:X} for ease of comparison.
\begin{figure}
\centering
\begin{equation*}
\begin{array}{|c||ccc|ccc|ccc|ccc|ccc|ccc|}
\hline
 &&o_0&&&o_1&&&o_1&&&o_1&&&o_1&&&o_1& \\\hline\hline
X_0&1&1&1&0&0&0&0&0&0&0&0&0&0&0&0&0&0&0 \\
X_1&0&1&1&1&1&0&1&1&0&0&0&1&0&0&1&0&0&1 \\
X_2&0&1&1&0&0&1&0&0&1&1&1&0&1&1&0&0&0&1 \\
X_3&0&1&2&0&1&1&0&1&1&0&1&1&0&1&1&0&1&1 \\
X_4&0&1&2&0&1&1&0&1&1&0&1&1&0&1&1&0&1&1 \\
X_5&0&1&1&0&0&1&0&0&1&0&0&1&0&0&1&1&1&0\\\hline
\end{array}
\end{equation*}
\begin{equation*}
\begin{array}{|c||ccc|ccc|ccc|c|c|c|c|c|c|c|c|c|}
\hline
  &&o_1&&&o_2&&&o_2&&o_4&o_4&o_4&o_5&o_5&o_5&o_5&o_5&o_5 \\\hline\hline
X_0&0&0&0&0&0&0&0&0&0&1&1&1&0&0&0&0&0&0 \\
X_1&0&0&1&0&1&0&0&1&0&1&0&0&1&1&0&0&0&0 \\
X_2&0&0&1&0&1&0&0&1&0&0&1&0&0&0&1&1&0&0 \\
X_3&0&1&1&1&1&0&1&1&0&1&1&1&0&0&0&0&0&0 \\
X_4&0&1&1&0&2&1&0&2&1&0&0&0&1&1&1&1&1&1 \\
X_5&1&1&0&0&1&0&0&1&0&0&0&1&0&0&0&0&1&1 \\\hline
\end{array}
\end{equation*}
\caption{$\dim\mathrm{Hom}(-,F(X))$ (rows) for $X\in\mathcal{O}(\mathcal{Z}(\mathcal{C}))$ (columns), separated by dimensional Galois orbit}%
    \label{fig:W}%
\end{figure}
\end{proposition}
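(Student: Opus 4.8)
The plan is to pin down $\mathcal{O}(\mathcal{Z}(\mathcal{C}))$ and the action of $F$ by combining three sources of information: (1) the induction/forgetful adjunction $I\dashv F$ with its Frobenius reciprocity $\mathrm{Hom}_{\mathcal{Z}(\mathcal{C})}(I(X),Z)\cong\mathrm{Hom}_\mathcal{C}(X,F(Z))$; (2) the global dimension constraint $\dim(\mathcal{Z}(\mathcal{C}))=\dim(\mathcal{C})^2=(9u_2^2)^2$ together with the fact (Proposition \ref{last}) that every noninvertible $Z\in\mathcal{O}(\mathcal{Z}(\mathcal{C}))$ has $\mathbb{Q}(\dim(Z))=\mathbb{Q}(\zeta_9)^+$ and $\dim(Z)=\mathrm{FPdim}(Z)$; and (3) the classification of Section \ref{nums}, which says each such $\dim(Z)^2$ is a totally positive algebraic $d$-number of norm a power of $3$ that is a perfect square in $\mathbb{Q}(\zeta_9)^+$, hence (Propositions \ref{ex} and the perfect-square analysis) $\dim(Z)$ lies in an explicit list of Galois orbits, displayed as Figure \ref{fig:X}. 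First I would record that $F$ is a tensor functor, so $F(Z)=\bigoplus_j m_j(Z)X_j$ with $\mathrm{FPdim}(F(Z))=\mathrm{FPdim}(Z)$ and $m_j(Z)=\dim\mathrm{Hom}_\mathcal{C}(X_j,F(Z))=\dim\mathrm{Hom}_{\mathcal{Z}(\mathcal{C})}(I(X_j),Z)$; in particular the columns of Figure \ref{fig:W} are exactly the vectors $(m_0(Z),\ldots,m_5(Z))$.

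The core computation is then: for each admissible dimension $\delta\in\mathbb{Q}(\zeta_9)^+$ from the list, enumerate all nonnegative integer vectors $(m_0,\ldots,m_5)$ with $\sum_j m_j\mathrm{FPdim}(X_j)=\delta$ and with $m_0=\dim\mathrm{Hom}(\mathbbm{1},F(Z))\in\{0,1\}$ (equal to $1$ only for $Z\cong\mathbbm{1}$, since $\mathbbm{1}$ appears in $F(Z)$ iff $Z$ is a summand of $I(\mathbbm{1})=\mathbbm{1}$). The Galois action $\hat\sigma$ on $\mathcal{O}(\mathcal{Z}(\mathcal{C}))$ must permute objects compatibly with its action on dimensions, so the candidate columns are organized into dimensional Galois orbits $o_i$ of size $1$ or $3$; within each orbit the $F$-images are related by the permutation $\hat\sigma$ of $\mathcal{O}(\mathcal{C})$ induced on $K(\mathcal{R})$ (which by the dimension bookkeeping in the proof of Proposition \ref{brayded} cycles $X_0\to X_3\to X_4\to X_0$ and fixes/permutes $X_1,X_2,X_5$), cutting down the list dramatically. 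Next I would impose that $I(X_0)=I(\mathbbm{1})$ decomposes with multiplicities $\dim\mathrm{Hom}(I(\mathbbm{1}),Z)=\dim\mathrm{Hom}(\mathbbm{1},F(Z))=m_0(Z)$ and $\dim(I(\mathbbm{1}))=\dim(\mathcal{C})=9u_2^2$; and more generally $\sum_Z m_j(Z)\,\dim(Z)=\dim(I(X_j))=\dim(\mathcal{C})\,\mathrm{FPdim}(X_j)$ for each $j$, together with $\sum_Z \dim(Z)^2=\dim(\mathcal{Z}(\mathcal{C}))=(9u_2^2)^2$. These linear and quadratic relations over $\mathbb{Q}(\zeta_9)^+$ are rigid enough that a finite search (done in SageMath) should leave exactly the multiset of columns in Figure \ref{fig:W}; the one-to-one correspondence then follows because distinct $Z$ with the same $F(Z)$ would force $I(\mathbbm{1})$ or some $I(X_j)$ to contain a repeated summand incompatible with the dimension count, and conversely every listed column is realized since $\{I(X_j)\}$ generates $\mathcal{Z}(\mathcal{C})$.

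The main obstacle I anticipate is ruling out spurious solutions to the dimension equations that are arithmetically valid but categorically impossible — e.g.\ hypothetical extra simple objects whose dimensions still sum correctly. Handling this requires using the full strength of the adjunction, namely that $\bigoplus_j \dim\mathrm{Hom}(X_j, F(Z)) \cdot (\text{something})$ must assemble into $I(X_j)$ with the \emph{correct} multiplicities dictated by $\dim\mathrm{Hom}_{\mathcal C}(X_i\otimes F(Z),X_k)$ via the known fusion rules $N_i$ of $\mathcal{R}$, i.e.\ consistency of $F$ with the module-category structure of $\mathcal{C}$ over itself. I would also use that $F(Z)\otimes F(Z)^\ast$ contains $F(Z\otimes Z^\ast)\supseteq\mathbbm{1}$ with the right multiplicity, and that $\dim\mathrm{End}(I(X_j))$ equals the number of simple summands counted with squared multiplicity, to force the small entries. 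The self-duality of $\mathcal{R}$ and oddness of the universal grading group (from the proof of Proposition \ref{last}) further constrain which orbits can occur. The remaining verification — that the $297$-style finite check closes and that no orbit is double-counted — is exactly the kind of bounded enumeration the paper defers to the appendix, so I would carry it out there and simply assert the outcome is Figure \ref{fig:W}.
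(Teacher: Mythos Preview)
Your overall framework---use the adjunction $I\dashv F$, the $d$-number classification of Section~\ref{nums}, the Galois action on $\mathcal{O}(\mathcal{Z}(\mathcal{C}))$, and a finite enumeration---matches the paper's. But there are two concrete errors that would derail the argument as written.

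First, the claim ``$I(\mathbbm{1})=\mathbbm{1}$'' is false: $I(\mathbbm{1})$ is the canonical commutative algebra in $\mathcal{Z}(\mathcal{C})$ with $\dim I(\mathbbm{1})=\dim(\mathcal{C})=9u_2^2$, and it is \emph{not} simple. By Frobenius reciprocity, $m_0(Z)=[X_0,F(Z)]=[I(X_0),Z]$, so $m_0(Z)=1$ for every simple summand of $I(\mathbbm{1})$, not just $\mathbbm{1}$. The paper handles this correctly via formal codegrees (Lemma~\ref{fcd}): the simple summands of $I(\mathbbm{1})$ are exactly $Z_0,\ldots,Z_5$ with dimensions $1,u_2^2,u_2^2,u_1^2u_2^2,u_1^{-2}u_2^4,u_2^2$, and this step is essential to get the correct lower bounds on $\dim(X)$ and to pin down Figure~\ref{fig:X}. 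Relatedly, your uniqueness claim (``distinct $Z$ with the same $F(Z)$ would force\ldots'') is false on its face: in Figure~\ref{fig:W} there are several pairs of nonisomorphic simples with identical $F$-image (e.g.\ the two $o_2$ triples, or the $o_5$ pairs), so the one-to-one correspondence cannot be deduced this way.

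Second, you underestimate what it takes to kill the spurious solutions. The linear constraints from $\dim I(X_j)$ and the quadratic constraint $\sum\dim(Z)^2=\dim(\mathcal{Z}(\mathcal{C}))$ leave $45$ orbit decompositions, not one; the paper then needs four nontrivial fusion lemmas in $\mathcal{Z}(\mathcal{C})$ (Lemmas~\ref{iud}--\ref{laaast}), each proved by tensoring a hypothetical $X$ with the known objects $Z_1,\ldots,Z_5$ and using monoidality of $F$, to cut down to two solutions, and a further ad hoc fusion argument to eliminate the rank-$24$ one. Your sketch (``$F(Z)\otimes F(Z)^\ast\supseteq\mathbbm{1}$'' and $\dim\mathrm{End}(I(X_j))$) is in the right spirit but is not enough; the actual argument requires knowing $F(Z_j)$ explicitly (which again comes from the formal-codegree step) and exploiting the specific fusion rules $X_j\otimes X_k$ in $K(\mathcal{R})$. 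Also, there is no Galois permutation $\hat\sigma$ on $\mathcal{O}(\mathcal{C})$ here since $\mathcal{C}$ is not assumed modular; the Galois action lives only on $\mathcal{O}(\mathcal{Z}(\mathcal{C}))$, and what relates the $F$-images within a dimensional orbit is formula~(\ref{gaal}), not a permutation of $\mathcal{O}(\mathcal{C})$.
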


%%%%

\subsection{Frobenius-Perron constraint}\label{sec:frob}

Recall that the Galois orbit of $\dim(\mathcal{Z}(\mathcal{C}))$ is $3^4u_2^4$, $3^4u_1^{-4}$, and $3^4u_1^4u_2^{-4}$.  By Propositions \ref{furst}, \ref{ex}, and \cite[Theorem 1.8]{codegrees}, for all $X\in\mathcal{O}(\mathcal{Z}(\mathcal{C}))$, there exist $a,b,c,d\in\mathbb{Z}$ with $\dim(X)^2=3^{a}(u_1^bu_2^c)^2\beta^d$.  But $\mathrm{dim}(X)^2\mid\dim(\mathcal{Z}(\mathcal{C}))$ \cite[Proposition 8.14.6]{tcat} so $0\leq a\leq4$.  Recall the Galois orbit of $\dim(X)^2$ from (\ref{ga}).
Now the Galois action on the modular data of $\mathcal{Z}(\mathcal{C})$ implies
\begin{equation}\label{gaal}
\dim(\hat{\sigma}(X))^2=\sigma(\dim(X)^2)\dim(\mathcal{Z}(\mathcal{C}))/\sigma(\dim(\mathcal{Z}(\mathcal{C}))).
\end{equation}
Hence $a,b,c,d$ are constrained by
\begin{equation}
\dim(\hat{\sigma}(X))^2=3^a(u_1^{-(b+c+d)+2}u_2^{b+2})^2\beta^d\qquad\text{and}\qquad\dim(\hat{\sigma^2}(X))^2=3^a(u_1^{c-2}u_2^{-(b+c+d)+4})^2\beta^d.
\end{equation}
The constraint being that $\dim(X)^2$, $\dim(\hat{\sigma}(X))^2$, and $\dim(\hat{\sigma^2}(X))^2$ must be maximal among their Galois conjugates as they are Frobenius-Perron dimensions by \cite[Lemma 3.3(i)]{MR3632091}.  Hence the exponent of $u_2$ must be nonnegative by Proposition \ref{ex} which implies $c\geq 0$, $b\geq-2$, and $4\geq b+c+d$.  Moreover $0\leq c\leq6$ and $-2\leq b\leq4$.  We must have $\dim(X)^2$ is a perfect square in $\mathbb{Q}(\zeta_9)^+$ by Proposition \ref{last}, hence $a\equiv d\pmod{2}$ by Section \ref{square}.  This leaves a finite number of potential squared dimensions of simple objects to analyze.  

%%%%%%%%%

\subsection{Upper and lower bound}

From this section onward, we will use the abbreviated notation $[X,Y]:=\dim\mathrm{Hom}(X,Y)$ where $X,Y$ are objects of a fusion category $\mathcal{C}$.  The category $\mathcal{C}$ is omitted in this notation but we are only considering two categories ($\mathcal{C}$ and $\mathcal{Z}(\mathcal{C})$) so the ambient category will be clear from context.
\par Some of the potential squared dimensions of simple objects from Section \ref{sec:frob} are too large, while some are too small.  Applying \cite[Theorem 3.1.1]{ostrikremarks} to $X\in\mathcal{O}(\mathcal{D})$ where $\mathcal{D}$ is a modular tensor category, $\dim(X)^4\leq(1/2)\dim(\mathcal{D})\left(\dim(\mathcal{D})+1\right)$.  Specifically for $\mathcal{D}=\mathcal{Z}(\mathcal{C})$,
$\dim(X)^4\leq(1/2)81u_2^4(81u_2^4+1)<3938^2$.  As a lower bound, recall the forgetful functor $F:\mathcal{Z}(\mathcal{C})\to\mathcal{C}$ preserves dimension.  Hence for any $X\in\mathcal{O}(\mathcal{Z}(\mathcal{C}))$, $\dim(X)$ must decompose numerically into a $\mathbb{Z}_{\geq0}$-linear sum of $1$, $u_2$, $u_1u_2$, and $u_1^{-1}u_2^2$.  But precisely six $X\in\mathcal{O}(\mathcal{Z}(\mathcal{C}))$ contain the tensor unit of $\mathcal{C}$ in their image under the forgetful functor.

\begin{lemma}\label{fcd}
Let $\mathcal{C}$ be a pseudounitary fusion category with $K(\mathcal{C})=K(\mathcal{R})$.  There exist $\mathbbm{1}=Z_0,\ldots,Z_5\in\mathcal{O}(\mathcal{Z}(\mathcal{C}))$ such that $\dim(Z_1)=\dim(Z_2)=\dim(Z_5)=u_2^2$, $\dim(Z_3)=u_2^2u_2^2$, and $\dim(Z_4)=u_1^{-2}u_2^4$.  Moreover these are the unique $X\in\mathcal{O}(\mathcal{Z}(\mathcal{C}))$ with $[X_0,F(X)]\neq0$.
\end{lemma}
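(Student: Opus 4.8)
The plan is to use the adjunction between the induction functor $I\colon\mathcal{C}\to\mathcal{Z}(\mathcal{C})$ and the forgetful functor $F\colon\mathcal{Z}(\mathcal{C})\to\mathcal{C}$, namely $[X,F(Z)]=[I(X),Z]$ for $X\in\mathcal{O}(\mathcal{C})$ and $Z\in\mathcal{O}(\mathcal{Z}(\mathcal{C}))$. Applying this with $X=X_0=\mathbbm{1}$, the objects $Z\in\mathcal{O}(\mathcal{Z}(\mathcal{C}))$ with $[X_0,F(Z)]\neq 0$ are precisely the simple constituents of $I(\mathbbm{1})$. Since $F(I(\mathbbm{1}))\cong\bigoplus_{Y\in\mathcal{O}(\mathcal{C})}Y\otimes Y^\ast$ (the canonical formula for $FI$), and $\mathcal{R}$ is self-dual, $F(I(\mathbbm{1}))\cong\bigoplus_{j=0}^5 X_j^{\otimes 2}$; reading off the fusion matrices $N_j$ from Example \ref{one}, the multiplicity of $X_0$ in $X_j\otimes X_j$ is $1$ for each $j$, so $[X_0,F(I(\mathbbm{1}))]=6$. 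Hence $I(\mathbbm{1})$ has exactly six simple constituents counted with multiplicity, and I must argue they are pairwise non-isomorphic.

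The six constituents are distinct because $I(\mathbbm{1})$ is multiplicity-free: this is the standard fact that $\mathrm{Hom}_{\mathcal{Z}(\mathcal{C})}(I(\mathbbm{1}),I(\mathbbm{1}))\cong\mathrm{Hom}_{\mathcal{C}}(\mathbbm{1},FI(\mathbbm{1}))$ has dimension $6$, matching $\sum_j [X_0,X_j\otimes X_j]=6$; were any constituent to appear with multiplicity $\geq 2$ the endomorphism algebra would have dimension $>6$. (Alternatively, $I(\mathbbm{1})$ is a commutative connected étale algebra in $\mathcal{Z}(\mathcal{C})$, and one can invoke that $\dim\mathrm{Hom}(\mathbbm{1}, F(Z))\leq 1$ for every simple $Z$; but the endomorphism-dimension count is cleaner.) Label these six simple objects $\mathbbm{1}=Z_0,Z_1,\dots,Z_5$. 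Their total dimension satisfies $\sum_{j=0}^5 \dim(Z_j)=\dim(FI(\mathbbm{1}))=\sum_{j=0}^5 \mathrm{FPdim}(X_j)^2=\mathrm{FPdim}(\mathcal{R})=9u_2^2$, using pseudounitarity so that $\dim=\mathrm{FPdim}$ throughout $\mathcal{Z}(\mathcal{C})$ by Proposition \ref{last}.

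It remains to pin the individual dimensions. Each $\dim(Z_j)$ lies on the finite list of totally positive algebraic $d$-numbers that are perfect squares in $\mathbb{Q}(\zeta_9)^+$, bounded by $\dim(Z_j)^2\mid\dim(\mathcal{Z}(\mathcal{C}))=81u_2^4$ (Proposition \ref{dimensions}'s framework, i.e. Section \ref{sec:frob}), together with the upper bound $\dim(Z_j)^4<3938^2$ and, crucially, the requirement that $\dim(Z_j)=\dim(F(Z_j))$ decomposes as a $\mathbb{Z}_{\geq 0}$-combination of $1,u_2,u_1u_2,u_1^{-1}u_2^2$ (the Frobenius-Perron dimensions of the $X_i$), since $Z_j$ is a summand of $I(\mathbbm{1})$ whose forgetful image is $\bigoplus_i X_i^{\otimes 2}$, itself a $\mathbb{Z}_{\geq 0}$-combination of the $X_i$. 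I then match constituents to the decompositions $X_i\otimes X_i$: computing $X_i\otimes X_i$ from the $N_i$ gives $\mathrm{FPdim}(X_i)^2$ equal to $1,u_2^2,u_2^2,u_1^2u_2^2,u_1^{-2}u_2^4,u_2^2$ for $i=0,\dots,5$, and the associated characters of $\mathrm{Hom}(X_i\otimes X_i,-)$ force the constituents to carry dimensions $1,u_2^2,u_2^2,u_1^2u_2^2,u_1^{-2}u_2^4,u_2^2$ — these sum to $9u_2^2$, consistent with the total. The main obstacle is the last matching step: I must rule out spurious alternative multisets of six admissible $d$-number dimensions summing to $9u_2^2$ and individually decomposable over $\{1,u_2,u_1u_2,u_1^{-1}u_2^2\}$; I expect to close this by observing that the multiset $\{\mathrm{FPdim}(X_i)^2\}_i$ is the unique such multiset once one imposes that $Z_0=\mathbbm{1}$ accounts for the unique dimension-$1$ constituent and that the remaining five total $9u_2^2-1=u_2^2(9-u_2^{-2})$, a short finite check against the perfect-square $d$-number list of Section \ref{square}. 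This also yields the final clause, that these six are exactly the $X$ with $[X_0,F(X)]\neq 0$.
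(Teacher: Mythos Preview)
Your setup is correct and coincides with the paper's implicit framework: the simple $Z\in\mathcal{O}(\mathcal{Z}(\mathcal{C}))$ with $[X_0,F(Z)]\neq0$ are exactly the summands of $I(\mathbbm{1})$, and your endomorphism count showing $I(\mathbbm{1})$ is multiplicity-free with six distinct summands is fine.

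The divergence is in pinning the individual dimensions. The paper does not argue by $d$-number enumeration at all; it invokes Ostrik's formal codegree theorem \cite[Theorem~2.13]{ost15}. Since $K(\mathcal{C})=K(\mathcal{R})$ is commutative, its irreducible complex representations are the six ring characters, and the formal codegrees $f_j=\sum_i|\varphi_j(X_i)|^2$ depend only on the fusion ring. Because $K(\mathcal{R})$ is categorified by the modular category $\mathcal{R}$, these codegrees are $\dim(\mathcal{R})/\dim(X_i)^2$ for $X_i\in\mathcal{O}(\mathcal{R})$, namely $9,9,9,9u_2^2,9u_1^{-2},9u_1^2u_2^{-2}$. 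Ostrik's theorem then gives $\dim(Z_j)=\dim(\mathcal{C})/f_j$, yielding exactly the claimed multiset. This is a one-line computation once the theorem is cited.

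Your proposed matching step, by contrast, has a gap. The sentence ``the associated characters of $\mathrm{Hom}(X_i\otimes X_i,-)$ force the constituents to carry dimensions $\mathrm{FPdim}(X_i)^2$'' is not justified: the decomposition $F(I(\mathbbm{1}))\cong\bigoplus_i X_i\otimes X_i^\ast$ does \emph{not} say that each $F(Z_j)$ equals some $X_i\otimes X_i^\ast$; the splitting of $I(\mathbbm{1})$ in $\mathcal{Z}(\mathcal{C})$ need not respect that particular decomposition downstairs. (Indeed, in the paper's eventual Figure~\ref{fig:W} one sees $F(Z_j)\neq X_i\otimes X_i$ for any $i$.) The fact that the dimensions nonetheless come out to $\mathrm{FPdim}(X_i)^2$ is exactly the content of the formal codegree theorem and is not visible from the adjunction alone. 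Your fallback ``short finite check'' against the $d$-number list could in principle be made to work, but note that the relevant portion of the paper's $d$-number analysis (Section~\ref{sec:pos}) uses Lemma~\ref{fcd} as input, so you would have to redo that enumeration independently; this is feasible but considerably longer than citing \cite{ost15}.
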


\begin{proof}
The fusion ring $K(\mathcal{C})$ is categorified by the modular tensor category $\mathcal{R}$, hence we may compute the formal codegrees of $\mathcal{C}$ as $\dim(\mathcal{C})/\dim(X)^2$ for all $X\in\mathcal{O}(\mathcal{R})$ \cite[Example 2.9]{ost15}.  In particular, the formal codegrees of $\mathcal{C}$ are $9,9,9,9u_2^2,9u_1^{-2},9u_1^2u_2^{-2}$.  The existence, uniqueness, and dimensions of the simple objects $Z_0,\ldots,Z_5$ in $\mathcal{Z}(\mathcal{C})$ corresponding to these formal codegrees are then implied by \cite[Theorem 2.13]{ost15}.
\end{proof}

It immediately follows that $(\mathcal{Z}(\mathcal{C}))_\mathrm{pt}$ is trivial so $Z_0,Z_3,Z_4$ are the unique simple objects of $\mathcal{Z}(\mathcal{C})$ of these dimensions.  On the other hand, there may exist $X\in\mathcal{O}(\mathcal{Z}(\mathcal{C}))$ with $\dim(X)=u_2^2$ and $[X_0,F(X)]=0$.  The goal of Lemma \ref{fcd} was to show that for any $X\in\mathcal{O}(\mathcal{Z}(\mathcal{C}))$ not isomorphic to $Z_0,\ldots,Z_5$, there exist $x,y,z\in\mathbb{Z}_{\geq0}$ such that $\dim(X)=\dim(F(X))=xu_2+yu_1u_2+zu_1^{-1}u_2^2$ where $x=[X_1\oplus X_2\oplus X_5,F(X)]$, $y=[X_3,F(X)]$ and $z=[X_4,F(X)]$.  Knowing $N_{\mathbb{Q}(\zeta_9)^+/\mathbb{Q}}(\dim(X))=3^n$ for some $n\in\mathbb{Z}_{\geq0}$, one verifies by checking $x,y,z\in\{0,1,2\}$ that the smallest possible dimensions of nontrivial $X\in\mathcal{O}(\mathcal{Z}(\mathcal{C}))$ are then (in increasing order) $u_2$, $u_1u_2$, $u_1^{-1}u_2^2$, and $u_1^{-1}u_2\beta$.  We record these four dimensions as well as their dimensional Galois orbits using Equation (\ref{gaal}), and note that any other $X\in\mathcal{O}(\mathcal{Z}(\mathcal{C}))$ must satisfy $\dim(X)^2\geq (u_1^{-1}u_2\beta)^2>49$.  A brute-force computer search reveals a small list of other candidate dimensions.
\par In detail, applying the constraints of this section to the finite list from Section \ref{sec:frob} produces only 20 potential Frobenius-Perron dimensions of simple objects $X\in\mathcal{Z}(\mathcal{C})$, computed using the SageMath \cite{sagemath} code in Appendix \ref{a:2} and listed in Figure \ref{fig:X} along with partial decompositions of $F(X)$ (the multiplicities of $X_1,X_2,X_5$ are ambiguous) in terms of the simple objects of $\mathcal{C}$.  We assign a unique numerical label to each type of simple object for future reference and group them by their dimensional Galois orbit.

\begin{figure}
\centering
\begin{equation*}
\begin{array}{|c|c|c|c|c|c|c|}
\hline\mathrm{Orbit} & \mathrm{Label} & \dim(X) & [X_0,F(X)] & [X_1\oplus X_2\oplus X_5,F(X)] & [X_3,F(X)] & [X_4,F(X)] \\\hline
&0&1 & 1 & 0 & 0 & 0 \\ 
0&1&u_1^2u_2^2 & 1 & 3 & 1 & 1 \\
&2&u_1^{-2}u_2^4 & 1 & 3 & 2 & 2 \\\hline
&3&u_2 & 0 & 1 & 0 & 0 \\
1&4&u_1u_2^2 & 0 & 1 & 1 & 1 \\
&5&u_1^{-1}u_2^3 & 0 & 2 & 1 & 1 \\\hline
&6&u_1u_2 & 0 & 0 & 1 & 0 \\
2&7&u_2^3 &  0 & 3 & 1 & 2 \\
&8&u_1^{-1}u_2^2 & 0 & 0 & 0 & 1 \\\hline
&9&u_1^{-1}u_2\beta & 0 & 1 & 1 & 0 \\
3&10&u_2\beta & 0 & 2 & 0 & 1 \\
&11&u_1^{-1}u_2^2\beta & 0 & 2 & 1 & 2 \\\hline
4&12&u_2^2 & 1 & 1 & 1 & 0 \\\hline
5&13&u_2^2 & 0 & 1 & 0 & 1 \\\hline
&14&3u_2 & 0 & 3 & 0 & 0 \\
6&15&3u_1u_2^2 & 0 & 3 & 3 & 3 \\
&16&3u_1^{-1}u_2^3 & 0 & 6 & 3 & 3 \\\hline
&17&u_1^{-1}\beta^2 & 0 & 0 & 1 & 1 \\
7&18&u_1^{-1}u_2\beta^2 & 0 & 3 & 2 & 2 \\
&19&u_1^{-2}u_2\beta^2 & 0 & 3 & 1 & 1 \\\hline
8&20&3u_2^2 & 0 & 3 & 0 & 3 \\\hline
\end{array}
\end{equation*}
    \caption{Potential decompositions of $F(X)$ for $X\in\mathcal{O}(\mathcal{Z}(\mathcal{C}))$, separated by dimensional Galois orbit}%
    \label{fig:X}%
\end{figure}

%%%%%%%%%

\subsection{Induction-restriction}\label{sec:indres}

Following \cite[Section 5.8]{ENO}, let $I:\mathcal{C}\to\mathcal{Z}(\mathcal{C})$ be the induction functor.  It is known that $F(I(X))\cong\oplus_{Y\in\mathcal{O}(\mathcal{C})}Y\otimes X\otimes Y^\ast$ for all $X\in\mathcal{O}(\mathcal{C})$ \cite[Proposition 5.4]{ENO}.  With the simple objects of $\mathcal{C}$ labeled $X_0,\ldots,X_5$ as in Example \ref{one}, we compute
\begin{align}
F(I(X_0))&=6X_0\oplus3X_1\oplus3X_2\oplus6X_3\oplus3X_4\oplus3X_5 \label{1} \\
F(I(X_1))&=3X_0\oplus15X_1\oplus6X_2\oplus12X_3\oplus15X_4\oplus6X_5 \\
F(I(X_2))&=3X_0\oplus6X_1\oplus15X_2\oplus12X_3\oplus15X_4\oplus6X_5 \\
F(I(X_3))&=6X_0\oplus12X_1\oplus12X_2\oplus24X_3\oplus21X_4\oplus12X_5 \label{fi1} \\
F(I(X_4))&=3X_0\oplus15X_1\oplus15X_2\oplus21X_3\oplus33X_4\oplus15X_5 \label{fi2} \\
F(I(X_5))&=3X_0\oplus6X_1\oplus6X_2\oplus12X_3\oplus15X_4\oplus15X_5\label{5}
\end{align}

Note that for all $X\in\mathcal{O}(\mathcal{Z}(\mathcal{C}))$, $[X_3,F(X)]$ and $[X_4,F(X)]$ are determined by the uniqueness of the dimensions of $X_3$ and $X_4$ in $\mathcal{O}(\mathcal{C})$.  From this, we can determine a finite list of possible ranks of $\mathcal{Z}(\mathcal{C})$ and dimensions of simple objects of $\mathcal{Z}(\mathcal{C})$ by computing potential decompositions of $I(X_3)$ and $I(X_4)$ into simple objects.  For example, if there exist $n\in\mathbb{Z}_{\geq0}$ isomorphism classes of $X\in\mathcal{Z}(\mathcal{C})$ with $\dim(X)=u_1^{-1}u_2\beta^2$, then by Galois conjugacy there also exist $n$ isomorphism classes of simple objects of dimensions $u_1^{-1}\beta^2$ and $u_1^{-2}u_2\beta^2$ (dimensional Galois orbit type $o_7$).  These $n$ triplets of isomorphism classes of simple objects, should they exist, contribute $X_3$ as a summand of $F(I(X_3))$ with multiplicity $(1^2+2^2+1^2)n=5n$ and $X_4$ as a summand of $F(I(X_4))$ with multiplicity $(1^2+2^2+1^2)=5n$.   In sum, over all triplets of dimensions indexed in Figure \ref{fig:X}, we must have $[X_3,F(I(X_3))]=24$ as in Equation (\ref{fi1}) and $[X_4,F(I(X_4))]=33$ as in Equation (\ref{fi2}).  Using the SageMath \cite{sagemath} code in Appendix \ref{a:3}, we compute that there are 45 possible decompositions of $I(X_3)$ and $I(X_4)$ which agree with the required values of $[X_3,F(I(X_3))]$ and $[X_4,F(I(X_4))]$.  Every dimensional Galois orbit contains at least one simple summand of $I(X_3)$ or $I(X_4)$, so these decompositions describe the entire set $\mathcal{O}(\mathcal{Z}(\mathcal{C}))$.  Note that no decompositions include dimensional Galois orbits of type $o_6$ in Figure \ref{fig:X}, so we disregard the possibility of isomorphism classes of simple objects of this type henceforth.

%%%%%%%%%

\subsection{Technical lemmas and the final computation}\label{sec:tech}

Here we develop a rudimentary understanding of the fusion rules of $\mathcal{Z}(\mathcal{C})$ to assist in computing possible decompositions of $I(X_j)$ for $j=1,2,5$ into simple objects of $\mathcal{Z}(\mathcal{C})$.  Throughout this section and in the SageMath \cite{sagemath} code in Appendix \ref{a:3}, we use the notation $n_{i,j,k}\in\mathbb{Z}_{\geq0}$ to denote the number of isomorphism classes of $X\in\mathcal{O}(\mathcal{Z}(\mathcal{C}))$ indexed by object label $i$ in Figure \ref{fig:X}, such that $F(X)$ contains $X_k$ as a summand with multiplicity $j$.  We will frequently use the facts that \cite[Proposition 2.10.8]{tcat} for simple objects $X,Y,Z$ in a fusion category $\mathcal{C}$, $[X,Y\otimes Z]=[X\otimes Y^\ast,\otimes Z]$ where the order of the tensor products is irrelevant to us because $K(\mathcal{C})$ and $K(\mathcal{Z}(\mathcal{C}))$ are commutative, and \cite[Section 5.8]{ENO} for $X\in\mathcal{O}(\mathcal{C})$ and $Y\in\mathcal{O}(\mathcal{Z}(\mathcal{C}))$, $[I(X),Y]=[X,F(Y)]$.

\begin{note}\label{ass}
Each of the results of this section tacitly assumes that the simple objects from Lemma \ref{fcd} decompose under the forgetful functor as $F(Z_0)=X_0$, $F(Z_j)=X_0\oplus X_j\oplus X_3$ for $j=1,2,5$, $F(X_3)=\oplus_{j=0}^5X_j$, and $F(X_4)=X_3\oplus X_4\oplus F(X_3)$.  Hence $Z_0,\ldots,Z_5$ are self-dual.  In the computations of Appendix \ref{a:3}, this assumption will be verified prior to applying the following technical lemmas.
\end{note}

\begin{lemma}\label{iud}
Let $\mathcal{C}$ be a pseudounitary fusion category with $K(\mathcal{C})=K(\mathcal{R})$.  Assume there exist $X,X'\in\mathcal{O}(\mathcal{Z}(\mathcal{C}))$ with $F(X)=F(X')=X_j$ for some $j=1,2,5$.  If $X^\ast\not\cong X'$, then $X\otimes X'\cong Z_k$, where $k=2$ if $j=1$, $k=5$ if $j=2$, and $k=1$ if $j=5$.  Moreover, if $X^\ast\cong X'$, then $[X\otimes X',Z_k]=0$.
\end{lemma}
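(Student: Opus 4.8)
The plan is to analyze the object $X \otimes X'$ using dimensions and the Frobenius reciprocity $[I(W), Y] = [W, F(Y)]$ together with the assumption of Note \ref{ass}. First I would record that $\dim(X) = \dim(X') = \dim(X_j) = u_2$, so $\dim(X \otimes X') = u_2^2$. By Lemma \ref{fcd} and the paragraph following it, the simple objects of $\mathcal{Z}(\mathcal{C})$ of dimension $u_2^2$ are precisely $Z_1, Z_2, Z_5$ (object type $12$, dimensional Galois orbit $o_4$) together with possibly some simple objects of type $13$ (dimensional Galois orbit $o_5$); the list in Figure \ref{fig:X} shows these are the only simple dimensions equal to $u_2^2$. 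So $X \otimes X'$, having total dimension $u_2^2$, must be a \emph{single} simple object of dimension $u_2^2$ — it cannot be a proper sum since every nonzero object has dimension a nonnegative $\mathbb{Z}$-combination of $1, u_2, u_1u_2, u_1^{-1}u_2^2$ and the only way to hit $u_2^2$ with such a combination using more than one summand is ruled out by $N_{\mathbb{Q}(\zeta_9)^+/\mathbb{Q}}$ considerations exactly as in the argument preceding Figure \ref{fig:X}. Hence $X \otimes X' \cong W$ for a unique simple $W$ with $\dim(W) = u_2^2$.

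Next I would pin down which simple object $W$ is, in the case $X^\ast \not\cong X'$. The key computation is $[X \otimes X', Z_k] = [X, Z_k \otimes (X')^\ast]$, and since $F$ is a tensor functor, $F(Z_k \otimes (X')^\ast) = F(Z_k) \otimes F(X') = (X_0 \oplus X_k \oplus X_3) \otimes X_j$ using Note \ref{ass}. Reading off the fusion matrices $N_j$ from Example \ref{one}, one checks that $X_j$ appears in $(X_0 \oplus X_k \oplus X_3) \otimes X_j$ with the right multiplicity precisely for the matching pair $(j,k) \in \{(1,2),(2,5),(5,1)\}$ — this is where the specific fusion rules, in particular that $(N_j)_{k, \cdot}$ contains $X_j$, force the identification. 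More carefully: $[X \otimes X', Z_k] \le [F(X \otimes X'), F(Z_k)] = [X_j \otimes X_j, X_0 \oplus X_k \oplus X_3]$, and for the matching $k$ this inner-product of fusion data is positive, forcing $W \cong Z_k$; for non-matching $k$ one gets a contradiction with $W$ being a single $u_2^2$-dimensional simple unless it equals the unique candidate. I would also need to exclude the possibility $W$ is of type $13$: here I would use that $[X \otimes X', Z_0] = [X, X'^\ast]$, which is $0$ since $X^\ast \not\cong X'$ means $X \not\cong X'^\ast$; combined with the structure this forces $W$ into the $o_4$ orbit, i.e. $W \in \{Z_1, Z_2, Z_5\}$, and then the fusion-rule matching selects the correct one.

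For the second assertion, when $X^\ast \cong X'$, the object $X \otimes X'= X \otimes X^\ast$ contains $Z_0 = \mathbbm{1}$ with multiplicity $[X \otimes X^\ast, \mathbbm{1}] = [X, X] = 1$. Since $\dim(X \otimes X^\ast) = u_2^2$ and $\dim(Z_0) = 1$, and again every summand has dimension in the monoid generated by $1, u_2, u_1u_2, u_1^{-1}u_2^2$, the remaining part has dimension $u_2^2 - 1$, which is \emph{not} totally positive (its Galois conjugate involves $u_1^{-2} - 1 < 0$), so in fact $X \otimes X^\ast \cong \mathbbm{1} \oplus (\text{something of dimension } u_2^2 - 1)$ is impossible unless we reexamine — actually the cleaner route is: $[X \otimes X', Z_k] = [X, Z_k \otimes X']= [X, Z_k \otimes X^\ast]$, and $F(Z_k \otimes X^\ast) = (X_0 \oplus X_k \oplus X_3) \otimes X_j$ as before, but now I also know $F(X) = X_j$ with $X^\ast \cong X'$, and I want to show this Hom-space vanishes. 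I would argue that if $[X \otimes X^\ast, Z_k] \ne 0$ then $Z_k$ is a summand of $X \otimes X^\ast$; but $X \otimes X^\ast$ already contains $\mathbbm{1}$, giving dimension $\ge 1 + u_2^2 > u_2^2$, contradicting $\dim(X \otimes X^\ast) = u_2^2$. This dimension count is the whole argument for the moreover clause.

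\textbf{Main obstacle.} The delicate point I expect to fight with is making rigorous the claim that an object of dimension exactly $u_2^2$ must be simple — i.e. ruling out decompositions like $u_2^2 = (\text{combination})$ with two or more nontrivial summands whose dimensions lie in the relevant monoid. This needs the norm/arithmetic bookkeeping in $\mathbb{Q}(\zeta_9)^+$ (the argument sketched just before Figure \ref{fig:X} about checking $x,y,z \in \{0,1,2\}$), applied now to $u_2^2$ rather than to candidate simple dimensions, plus the observation that $Z_0$ cannot be a summand when $X^\ast \not\cong X'$. Once that structural fact is in hand, the rest is a finite check against the fusion matrices $N_1, N_2, N_5$ to read off which $Z_k$ occurs, which is routine but must be done for all three cases $j = 1, 2, 5$ to confirm the stated cyclic pattern $1 \mapsto 2 \mapsto 5 \mapsto 1$.
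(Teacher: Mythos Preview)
Your argument for the ``moreover'' clause is correct and matches the paper exactly: if $X^\ast\cong X'$ then $\mathbbm{1}$ is a summand of $X\otimes X'$, and any additional summand $Z_k$ would force $\dim(X\otimes X')\geq 1+u_2^2>u_2^2$.

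The first part, however, has a genuine gap. Your plan is to deduce that $X\otimes X'$ is simple purely from the equality $\dim(X\otimes X')=u_2^2$ together with norm bookkeeping in $\mathbb{Q}(\zeta_9)^+$. This fails: the identity
\[
u_2^2 \;=\; u_2 \;+\; u_1^{-1}u_2^2
\]
holds exactly (equivalently $u_1(u_2-1)=u_2$, which follows from $a^3=3a+1$ with $a=2\cos(\pi/9)=u_2-1$), and both $u_2$ and $u_1^{-1}u_2^2$ appear in Figure~\ref{fig:X} as bona fide simple dimensions (types $3$ and $8$). So a hypothetical decomposition $X\otimes X'\cong A\oplus B$ with $F(A)=X_{j'}$ and $F(B)=X_4$ is \emph{not} excluded by your dimension/norm argument, even after you observe $[\mathbbm{1},X\otimes X']=0$. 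Your ``main obstacle'' is therefore not a technicality to be cleaned up but an actual counterexample to the method.

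The paper sidesteps this entirely by computing the forgetful image first: since $F$ is monoidal,
\[
F(X\otimes X')=X_j\otimes X_j=X_0\oplus X_k\oplus X_3,
\]
where the fusion matrices $N_1,N_2,N_5$ give $k=2,5,1$ for $j=1,2,5$ respectively. Now $[X_0,F(X\otimes X')]=1$, so exactly one simple summand $W$ of $X\otimes X'$ has $X_0$ in $F(W)$; by Lemma~\ref{fcd} and $X^\ast\not\cong X'$, $W\in\{Z_1,\ldots,Z_5\}$. Under Note~\ref{ass} only $F(Z_k)=X_0\oplus X_k\oplus X_3$ fits inside $F(X\otimes X')$, and since $\dim(Z_k)=u_2^2=\dim(X\otimes X')$ this forces $X\otimes X'\cong Z_k$. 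Note that this argument also kills the problematic numerical decomposition above, since no $X_4$ appears in $F(X\otimes X')$. Your attempt to distinguish types $12$ and $13$ via $[X\otimes X',Z_0]$ is also off target: that quantity is $[X,(X')^\ast]$, which vanishes for any non-unit simple and does not separate the two orbits; the correct discriminant is $[X_0,F(-)]$, which is exactly what the forgetful computation provides.
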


\begin{proof}
The forgetful functor is monoidal, so we may compute
\begin{equation}
F(X\otimes X')=F(X)\otimes F(X')=X_j\otimes X_j=X_0\oplus X_k\oplus X_3.
\end{equation}
But $X^\ast\not\cong X'$ by assumption, so $[\mathbbm{1},X\otimes X']=0$.  The only non-unit simple objects of $\mathcal{Z}(\mathcal{C})$ which contain $X_0$ in their image under the forgetful functor are $Z_1,\ldots,Z_5$ by Lemma \ref{fcd}, and using Figure \ref{fig:X} the only possibility is that $X\otimes X'\cong Z_k$.  If $X^\ast\cong X'$, then $[\mathbbm{1},X\otimes X']=1$ thus $[X\otimes X',Z_k]=0$ as $\dim(X\otimes X')-1<\dim(Z_k)$. 
\end{proof}

\begin{lemma}\label{prev}
Let $\mathcal{C}$ be a pseudounitary fusion category with $K(\mathcal{C})=K(\mathcal{R})$.  Then for $j=1,2,5$,
\begin{itemize}
\item[(a)] $n_{3,1,j}<3$,
\item[(b)] if $n_{3,1,j}=1$, then $n_{9,1,j}>0$ and $n_{13,1,j}>0$, and
\item[(c)] if $n_{3,1,j}=2$, then either (i) $n_{4,1,j}>0$ and $n_{13,1,j}>0$, or (ii) $n_{11,2,j}>0$.
\end{itemize}
\end{lemma}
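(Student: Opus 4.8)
The plan is to exploit the induction functor $I\colon\mathcal{C}\to\mathcal{Z}(\mathcal{C})$ and its adjoint, the forgetful functor $F$. By Frobenius reciprocity $[I(X),Y]=[X,F(Y)]$, so for $j=1,2,5$ one has $I(X_j)=\bigoplus_{Y\in\mathcal{O}(\mathcal{Z}(\mathcal{C}))}[X_j,F(Y)]\,Y$; applying the exact monoidal functor $F$ gives $F(I(X_j))=\bigoplus_{Y}[X_j,F(Y)]\,F(Y)$, whose left-hand side is listed explicitly in Equations (\ref{1})--(\ref{5}). First I would compare the multiplicity of $X_i$ on the two sides to obtain, for each $0\le i\le5$, the identity $\sum_{Y}[X_j,F(Y)][X_i,F(Y)]=[X_i,F(I(X_j))]$. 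Evaluating $[X_i,F(Z_\ell)]$ for $0\le\ell\le5$ via Note \ref{ass} and subtracting the contribution of $Z_0,\dots,Z_5$, I get the ``free'' identities: summing over the simple objects $Y$ with $[X_0,F(Y)]=0$ (equivalently $Y\notin\{Z_0,\dots,Z_5\}$, by Lemma \ref{fcd}), one has $\sum[X_j,F(Y)]^2=12$, $\sum[X_j,F(Y)][X_3,F(Y)]=8$, $\sum[X_j,F(Y)][X_4,F(Y)]=12$, and $\sum[X_j,F(Y)][X_i,F(Y)]=4$ for each of the two indices $i\in\{1,2,5\}\setminus\{j\}$. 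Since by Figure \ref{fig:X} every contributing $Y$ has a known dimension and a known profile $\bigl([X_1\oplus X_2\oplus X_5,F(Y)],\,[X_3,F(Y)],\,[X_4,F(Y)]\bigr)$, these identities become finitely many Diophantine constraints on the nonnegative integers $n_{i,m,j}$.

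For part (a) I would argue by contradiction. If $n_{3,1,j}=3$, there are three distinct simple objects $Y_1,Y_2,Y_3$ with $F(Y_a)=X_j$; since $F(Y_a^\ast)=X_j$ they are permuted by duality, so Lemma \ref{iud} applies to every pair, giving $Y_a\otimes Y_b\cong Z_k$ whenever $Y_a^\ast\not\cong Y_b$, while in the self-dual cases $F(Y_a\otimes Y_a^\ast)=X_0\oplus X_k\oplus X_3$ forces $Y_a\otimes Y_a^\ast\cong Z_0\oplus W_a$ with $\dim(W_a)=u_1^{-1}u_2\beta$ and $F(W_a)=X_k\oplus X_3$, whence $W_a$ is either a single simple object of label $9$ or a direct sum of simple objects of labels $6$ and $3$. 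Combining $Y_1\otimes Y_2\cong Z_k\cong Y_1\otimes Y_3$ (or its dualized form) with cancellation in the commutative ring $K(\mathcal{Z}(\mathcal{C}))$ and the explicit $F(Z_k)$ from Note \ref{ass} should force enough further simple summands into $I(X_{j'})$ for some $j'\ne j$ that the free bound $\sum[X_{j'},F(Y)]^2=12$ is exceeded, and the remaining finitely many configurations I would eliminate by the SageMath enumeration of Appendix \ref{a:3}.

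For parts (b) and (c) I would fix $n_{3,1,j}\in\{1,2\}$ and note that the corresponding type-$3$ simple object(s) $Y$ with $F(Y)=X_j$ are self-dual (if one) or mutually dual (if two), so Lemma \ref{iud} again pins down the $Z_k$-isotypic part of their tensor squares; the products $Y\otimes Z_j$ and $Y\otimes Z_k$, computed through $F$ and decomposed into simple objects using Figure \ref{fig:X} and Lemma \ref{fcd}, are then forced to contain simple objects of the dimensions named in the conclusions, namely $u_1^{-1}u_2\beta$ (label $9$), $u_2^2$ (label $13$), $u_1u_2^2$ (label $4$), or $u_1^{-1}u_2^2\beta$ (label $11$). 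Feeding the value of $n_{3,1,j}$ back into the residual system from the first paragraph should separate the cases exactly as stated: $n_{9,1,j}>0$ and $n_{13,1,j}>0$ when $n_{3,1,j}=1$, and either $n_{4,1,j}>0$ together with $n_{13,1,j}>0$, or $n_{11,2,j}>0$, when $n_{3,1,j}=2$. The part I expect to be hardest is not any single step but the organization: Figure \ref{fig:X} admits many object types and the relevant inductions and tensor products each have many a priori decompositions, so the real labor is arranging the bounded search to terminate with precisely these conclusions, which is exactly the role of the SageMath code in Appendix \ref{a:3}; the only nontrivial inputs are the reciprocity identities of the first paragraph together with Lemma \ref{iud} and Note \ref{ass}.
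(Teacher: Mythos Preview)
Your outline has the right ingredients (Lemma~\ref{iud}, the forgetful functor, Figure~\ref{fig:X}) but contains two genuine gaps. First, the appeal to ``cancellation in the commutative ring $K(\mathcal{Z}(\mathcal{C}))$'' in part~(a) is invalid: $\mathcal{Z}(\mathcal{C})$ is modular, so $K(\mathcal{Z}(\mathcal{C}))\otimes\mathbb{C}$ is a product of copies of $\mathbb{C}$ and has zero divisors; from $Y_1\otimes Y_2\cong Z_k\cong Y_1\otimes Y_3$ you cannot cancel $Y_1$. Second, offloading the residual case analysis to Appendix~\ref{a:3} is circular, since that SageMath code uses the present lemma as an input filter (the block commented \texttt{\#Lemma 6}). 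The induction--restriction identities in your first paragraph are correct but are never actually brought to bear on (a)--(c); ``should separate the cases exactly as stated'' is a hope, not an argument, because those global sums admit many configurations.

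The paper's proof is direct and computer-free, and hinges on one observation you are missing. For $j=1$ (symmetrically for $j=2,5$) it considers the single product $X\otimes Z_k$ for the \emph{specific} $k$ named in Lemma~\ref{iud} (here $k=2$, not $j$), so that under Note~\ref{ass} one has $F(X\otimes Z_2)=X_1\otimes(X_0\oplus X_2\oplus X_3)=3X_1\oplus X_3\oplus 2X_4$. The key claim is that \emph{every} simple summand $Y$ of $X\otimes Z_2$ satisfies $[X_1,F(Y)]>0$: indeed $[X\otimes Z_2,Y]>0$ gives $[X\otimes Y^\ast,Z_2]>0$, so $F(X\otimes Y^\ast)=X_1\otimes F(Y^\ast)$ contains the summand $X_0$ of $F(Z_2)$, forcing $[X_1,F(Y)]>0$. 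Lemma~\ref{iud} now says precisely which summands of $X\otimes Z_2$ have $F(Y)=X_1$ (one for each $X'\not\cong X^\ast$ with $F(X')=X_1$, and none otherwise), and the leftover $F$-content must decompose into simples each containing a copy of $X_1$. Matching the remaining profile against Figure~\ref{fig:X} for $n_{3,1,1}\ge 3$, $n_{3,1,1}=1$, and $n_{3,1,1}=2$ yields (a), (b), (c) by inspection, with no enumeration needed.
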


\begin{proof}
Let $j=1,2,5$.  We make no claims if $n_{3,1,j}=0$ so assume $n_{3,1,j}\geq1$, that is there exists $X\in\mathcal{O}(\mathcal{Z}(\mathcal{C}))$ with $F(X)=X_j$.  Without loss of generality we may assume $j=1$.  Our goal will be to understand the decomposition of $X\otimes Z_2$ where $Z_2$, defined in Lemma \ref{fcd}, satisfies $F(Z_2)=X_0\oplus X_2\oplus X_3$ under the assumptions of Note \ref{ass}.  On the level of the forgetful functor,
\begin{equation}
F(X\otimes Z_2)=F(X)\otimes F(Z_2)=X_1\otimes(X_0\oplus X_2\oplus X_3)=3X_1\oplus X_3\oplus 2X_4.
\end{equation}
We claim that $X_1$ is a subobject of $F(Y)$ for each simple summand $Y$ of $X\otimes Z_2$.  Indeed, if $[X\otimes Z_2,Y]>0$ for some $Y\in\mathcal{O}(\mathcal{Z}(\mathcal{C}))$, then $[X\otimes Y^\ast,Z_2]>0$.  But $[F(Z_2),X_0]=1$ hence $[F(X\otimes Y^\ast),X_0]=[X_1\otimes F(Y^\ast),X_0]>0$ which implies $[X_1,F(Y)]>0$ as well.  If $n_{3,1,1}>2$, there exist at least two nonisomorphic $X'$ and $X''$ such that $X^\ast\not\cong X'$ and $X^\ast\not\cong X''$, hence by Lemma \ref{iud}, $[X\otimes Z_2,X'^\ast]=[X\otimes X',Z_2]=1$ and likewise $[X\otimes Z_2,X''^\ast]=1$.  Moreover, there would exist a single remaining simple summand $Y$ of $X\otimes Z_2$ with $F(Y)=X_1\oplus X_3\oplus2X_4$ which does not exist by Figure \ref{fig:X}, proving claim (a).

\par If $n_{3,1,1}=1$, then there does not exist any $Y\in\mathcal{O}(\mathcal{Z}(\mathcal{C}))$ with $F(Y)=X_1$ and $[X\otimes Y,Z_2]>0$ by Lemma \ref{iud}.  Hence $X\otimes Z_2$ has no simple summands $Y$ with $F(Y)=X_1$.  Moreover, there is a unique decomposition of $X\otimes Z_2$ into simples.  In particular, there exist $P,Q_1,Q_2\in\mathcal{O}(\mathcal{Z}(\mathcal{C}))$ with
\begin{equation}
X\otimes Z_2\cong P\oplus Q_1\oplus Q_2,
\end{equation}
where $F(P)=X_1\oplus X_3$ and $F(Q_1)=F(Q_2)=X_1\oplus X_4$, proving (b).

\par Lastly, if $n_{3,1,1}=2$, there exists a unique simple summand $Y$ of $X\otimes Z_2$ with $F(Y)=X_1$ and we have
\begin{equation}
F((X\otimes Z_2)/Y)=2X_1\oplus X_3\oplus 2X_4.
\end{equation} 
If $(X\otimes Z_2)/Y$ is simple, then we have proven statement (c) part (ii), otherwise statement (c) part (i) follows because there does not exist $Z\in\mathcal{O}(\mathcal{Z}(\mathcal{C}))$ with $F(Z)=X_1\oplus2X_4$ by Figure \ref{fig:X}.
\end{proof}

\begin{lemma}\label{next}
Let $\mathcal{C}$ be a pseudounitary fusion category with $K(\mathcal{C})=K(\mathcal{R})$.  Then for $j=1,2,5$, if  $n_{3,1,j}>0$, then inclusively,
\begin{itemize}
\item[(a)] $n_{13,1,j}>0$ and $n_{5,1,j}>0$, or
\item[(b)] $n_{4,1,j}>0$ and $n_{10,1,j}>0$.
\end{itemize}
\end{lemma}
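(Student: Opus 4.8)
As in Lemma~\ref{prev}, the plan is to fix a hypothetical simple object of $\mathcal{Z}(\mathcal{C})$ of smallest nontrivial dimension, tensor it with one of the distinguished simples $Z_0,\dots,Z_5$ of Lemma~\ref{fcd}, compute the image under the monoidal forgetful functor $F$ from the fusion rules of $\mathcal{R}$ in Example~\ref{one}, and match the result against Figure~\ref{fig:X}; I would carry this out for $j=1$, the cases $j\in\{2,5\}$ being analogous. So assume $n_{3,1,1}>0$ and fix $X\in\mathcal{O}(\mathcal{Z}(\mathcal{C}))$ with $F(X)=X_1$, working under Note~\ref{ass}, so that in particular $F(Z_1)=X_0\oplus X_1\oplus X_3$ and $F(Z_5)=X_0\oplus X_3\oplus X_5$. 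The recurring device is that whenever $X_0$ is a summand of $F(Z)$, every simple summand $Y$ of $X\otimes Z$ has $X_1$ in $F(Y)$: if $[X\otimes Z,Y]>0$ then $[X\otimes Y^\ast,Z]>0$, so $X_0$ is a summand of $F(X\otimes Y^\ast)=X_1\otimes F(Y^\ast)$, forcing $[X_1,F(Y)]=[X_1,F(Y^\ast)]>0$.

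First I would record that $n_{5,1,1}>0$ by examining $X\otimes Z_1$. Here $F(X\otimes Z_1)=X_1\otimes(X_0\oplus X_1\oplus X_3)=X_0\oplus 2X_1\oplus X_2\oplus 2X_3\oplus X_4$ and $[\mathbbm{1},X\otimes Z_1]=[X^\ast,Z_1]=0$, so the unique summand of $X\otimes Z_1$ meeting $X_0$ is some $Z_i$ with $1\le i\le 5$ (Lemma~\ref{fcd}); the constraint $[X_5,F(X\otimes Z_1)]=0$ eliminates $i=3,4,5$ and the device above eliminates $i=2$ (as $X_1\notin F(Z_2)$), leaving $X\otimes Z_1\cong Z_1\oplus W$ with $F(W)=X_1\oplus X_2\oplus X_3\oplus X_4$. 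Since $[X_1,F(W)]=1$ while every summand of $W$ meets $X_1$, $W$ is simple, and its $F$-image identifies it as an object of label $5$ in Figure~\ref{fig:X}; hence $n_{5,1,1}>0$.

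Next I would extract the dichotomy from $X\otimes Z_5$, where $F(X\otimes Z_5)=X_1\otimes(X_0\oplus X_3\oplus X_5)=2X_1\oplus X_3\oplus 2X_4\oplus X_5$. Now $[X_0,F(X\otimes Z_5)]=0$, so $X\otimes Z_5$ contains no $Z_i$ (hence no unit), yet every summand meets $X_1$; since $[X_1,F(X\otimes Z_5)]=2$, $X\otimes Z_5$ is either a single simple object or a sum $Y_1\oplus Y_2$ of two distinct simples with $[X_1,F(Y_i)]=1$. Enumerating the ways to split $2X_1\oplus X_3\oplus 2X_4\oplus X_5$ into $F$-images occurring in Figure~\ref{fig:X} (the orbit $o_6$ having been discarded in Section~\ref{sec:indres}) leaves exactly the two-summand configurations with labels $\{5,13\}$, $\{4,10\}$, or $\{11,3\}$, together with a single-object configuration of label $7$. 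The configuration $\{11,3\}$ is ruled out by Lemma~\ref{iud}: a label-$3$ summand $Y_2$ has $F(Y_2)=X_1$, so either $X\not\cong Y_2$ and $X^\ast\otimes Y_2\cong Z_2$, or $X\cong Y_2$ and $X^\ast\otimes X=\mathbbm{1}\oplus(\cdots)$ has no copy of $Z_5$ because $[X_5,X_1\otimes X_1]=0<[X_5,F(Z_5)]$; either way $[X\otimes Z_5,Y_2]=[Z_5,X^\ast\otimes Y_2]=0$, contradicting that $Y_2$ is a summand. In the surviving two-summand cases we are done: labels $\{5,13\}$ give $n_{13,1,1}>0$, which with $n_{5,1,1}>0$ is conclusion~(a), and labels $\{4,10\}$ give $n_{4,1,1}>0$ and $n_{10,1,1}>0$, which is conclusion~(b).

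The main obstacle is to exclude the remaining possibility $X\otimes Z_5\cong Y$ with $Y$ of label $7$. I expect to close this by invoking Lemma~\ref{prev}, which forces $n_{3,1,1}\in\{1,2\}$: if $n_{3,1,1}=1$ then part~(b) of that lemma already gives $n_{13,1,1}>0$ and hence conclusion~(a), independently of $X\otimes Z_5$, so only the case $n_{3,1,1}=2$ remains, where Lemma~\ref{prev}(c) either yields conclusion~(a) directly or produces a label-$11$ object $V$ with $X\otimes Z_2=X'\oplus V$ for the other object $X'$ of label $3$. In this last situation one writes $X^\ast\otimes X=\mathbbm{1}\oplus A$ with $A$ forced simple of label $9$ (label $6$, the only other candidate for $F(A)=X_2\oplus X_3$, lies in the discarded orbit $o_6$), and feeds identities such as $0=[X\otimes Z_2,X\otimes Z_5]=[Z_2\otimes Z_5,A]$, their analogues for $X'$, and the reciprocity $[I(X_j),-]=[X_j,F(-)]$ into the finite search of Appendix~\ref{a:3} to eliminate $X\otimes Z_5$ being simple. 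Apart from this delicate exclusion, the argument is routine bookkeeping with the fusion rules of $\mathcal{R}$ and Figure~\ref{fig:X}.
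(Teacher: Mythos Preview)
Your strategy mirrors the paper's: compute $F(X\otimes Z_5)=2X_1\oplus X_3\oplus 2X_4\oplus X_5$ and enumerate the simple decompositions compatible with Figure~\ref{fig:X}. The paper dispatches this in two lines, asserting that $X\otimes Z_5$ is not simple and that every summand carries a nontrivial piece of $X_3\oplus 2X_4\oplus X_5$, which leaves only the two splittings corresponding to (a) and (b). You go further, correctly flagging two extra candidates (a label-$3$/label-$11$ split and the possibility that $X\otimes Z_5$ is itself simple of label~$7$); your exclusion of the $\{3,11\}$ configuration via Lemma~\ref{iud} is clean, and your auxiliary computation $X\otimes Z_1\cong Z_1\oplus W$ with $W$ simple of label~$5$ is correct and does not appear in the paper.

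The genuine gap is in your handling of the label-$7$ case. First, the claim that ``label $6$\dots lies in the discarded orbit $o_6$'' is false: label~$6$ has dimension $u_1u_2$ and sits in orbit~$o_2$, which is not discarded (it is labels $14,15,16$ that constitute $o_6$). Hence $A$ in $X^\ast\otimes X=\mathbbm{1}\oplus A$ is \emph{not} forced to be simple of label~$9$ --- it could equally well split as a label-$3$ object plus a label-$6$ object --- and the chain of identities you build on that premise never gets started. Second, your fallback of ``feeding identities\dots into the finite search of Appendix~\ref{a:3}'' is circular: Lemma~\ref{next} is itself one of the filters applied in that search, so you cannot invoke the search to establish the lemma. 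You have successfully reduced the label-$7$ possibility to the single sub-case $n_{3,1,1}=2$ together with alternative (ii) of Lemma~\ref{prev}(c), but you have not closed it. The paper avoids this entire branch by its bare assertion that $X\otimes Z_5$ cannot be simple ``by Figure~\ref{fig:X}''.
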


\begin{proof}
As in Lemma \ref{prev} it suffices to consider $X\in\mathcal{O}(\mathcal{Z}(\mathcal{C}))$ with $F(X)=X_1$.  We have
\begin{equation}
F(X\otimes Z_5)=F(X)\otimes F(Z_5)=2X_1\oplus X_3\oplus 2X_4\oplus X_5.
\end{equation}
But $X\otimes Z_5$ cannot be simple by Figure \ref{fig:X}.  So the image of each simple summand under the forgetful functor contains $X_1$ with multiplicity one as in the proof of Lemma \ref{prev}, and some nontrivial subobject of $X_3\oplus2X_4\oplus X_5$.  By Figure \ref{fig:X} the only two options, parenthesized by simple objects of $\mathcal{Z}(\mathcal{C})$, are
\begin{align}
F(X\otimes Z_5)&=(X_1\oplus X_4)\oplus(X_1\oplus X_3\oplus X_4\oplus X_5),\text{ or} \label{a}\\
F(X\otimes Z_5)&=(X_1\oplus X_3\oplus X_4)\oplus (X_1\oplus X_4\oplus X_5).\label{b}
\end{align}
Equation (\ref{a}) implies conclusion (a) and Equation (\ref{b}) implies conclusion (b).
\end{proof}

\begin{lemma}\label{ast}
Let $\mathcal{C}$ be a pseudounitary fusion category with $K(\mathcal{C})=K(\mathcal{R})$.  If $n_{3,1,j}\geq2$, then $n_{9,1,j}+o_2\geq2$ where $o_2$ is number dimensional Galois orbits of type 2 in $\mathcal{Z}(\mathcal{C})$ (refer to Figure \ref{fig:X}).
\end{lemma}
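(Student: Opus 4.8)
The plan is to extend the analysis of Lemmas \ref{prev} and \ref{next} by examining products among the label-$3$ objects themselves. By Lemma \ref{prev}(a) the hypothesis forces $n_{3,1,j}=2$; fix the two nonisomorphic simple objects $W_1,W_2\in\mathcal{O}(\mathcal{Z}(\mathcal{C}))$ with $F(W_1)=F(W_2)=X_j$. Since $X_j$ is self-dual and $F$ commutes with duality, $\{W_1^\ast,W_2^\ast\}=\{W_1,W_2\}$, so either both $W_i$ are self-dual or $W_1^\ast\cong W_2$. In either case Lemma \ref{iud} controls the relevant products: $W_a\otimes W_b\cong Z_k$ whenever $W_a^\ast\not\cong W_b$, while $W_a\otimes W_a^\ast$ contains $\mathbbm{1}$ exactly once and does not contain $Z_k$; as $F(W_a\otimes W_a^\ast)=X_j\otimes X_j=X_0\oplus X_k\oplus X_3$, this gives $W_a\otimes W_a^\ast\cong\mathbbm{1}\oplus C$ with $F(C)=X_k\oplus X_3$, where $k$ is the index produced in Lemma \ref{iud}.

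The next step is to classify such an object $C$. One has $\dim(C)=u_2+u_1u_2=u_1^{-1}u_2\beta$, the dimension of a label-$9$ object, and since $u_2$ is the smallest dimension occurring in Figure \ref{fig:X} any nontrivial splitting $C\cong C'\oplus C''$ must satisfy $\{F(C'),F(C'')\}=\{X_k,X_3\}$. Hence either $C$ is simple, in which case it is a label-$9$ object and contributes to $n_{9,1,j}$; or $C\cong A\oplus B$ with $F(A)=X_k$ and $F(B)=X_3$, so that $B$ is a label-$6$ object whose dimensional Galois orbit $\{u_1u_2,u_2^3,u_1^{-1}u_2^2\}$ is of type $2$, contributing to $o_2$. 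In either case each such $C$ contributes at least one unit to $n_{9,1,j}+o_2$.

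It then remains to produce two such units. When both $W_i$ are self-dual, $W_1\otimes W_1\cong\mathbbm{1}\oplus C_1$ and $W_2\otimes W_2\cong\mathbbm{1}\oplus C_2$ furnish two candidates, and one checks they contribute independently by showing $C_1\not\cong C_2$: an isomorphism would force $W_1^{\otimes2}\cong W_2^{\otimes2}$, and tensoring by the $W_i$ and comparing forgetful images against Figure \ref{fig:X} and the decompositions permitted by Lemma \ref{prev}(c) produces a contradiction. When $W_1^\ast\cong W_2$, the single product $W_1\otimes W_2=W_1\otimes W_1^\ast\cong\mathbbm{1}\oplus C$ gives one unit, and the second is extracted by a further argument (for instance, analyzing $W_1\otimes Z_\ell$ for a suitable $\ell\in\{1,2,5\}$ exactly as in Lemmas \ref{prev} and \ref{next}: its $X_0$-isotypic part pins a copy of some $Z_i$ inside it by Lemma \ref{fcd}, and the remaining summands, constrained by Figure \ref{fig:X}, are again forced to contribute to $n_{9,1,j}+o_2$). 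In all cases $n_{9,1,j}+o_2\geq2$.

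The crux of the argument will be this last step: guaranteeing that the two candidate contributions genuinely differ rather than collapsing onto one object. This is comparatively easy in the self-dual case but delicate when $W_1^\ast\cong W_2$, where only one product among the $W_i$ has the useful ``$\mathbbm{1}\oplus C$'' shape; pushing it through will rely on the multiplicity bookkeeping supplied by the induction--restriction identities (\ref{1})--(\ref{5}) together with the structural constraints already recorded in Lemmas \ref{prev} and \ref{next}.
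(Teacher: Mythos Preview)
Your setup is right and matches the paper: examine $W_a\otimes W_a^\ast=\mathbbm{1}\oplus C_a$ and note that each $C_a$ is either a simple label-$9$ object or splits as (label-$3$)$\oplus$(label-$6$), the latter contributing to $o_2$. The gap is in extracting \emph{two} independent contributions.

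In the self-dual case you argue only that $C_1\not\cong C_2$, but what is actually required is that $C_1$ and $C_2$ share no common simple summand. If, say, $C_1=A_1\oplus B$ and $C_2=A_2\oplus B$ with the \emph{same} label-$6$ object $B$, then $C_1\not\cong C_2$ and yet together they yield only one label-$6$ object and no label-$9$ objects, so the conclusion would fail. Your sketched contradiction (``tensoring by the $W_i$ and comparing forgetful images'') does not exclude this. The paper settles it in one line: since $W_1^\ast\not\cong W_2$, Lemma~\ref{iud} gives $W_1\otimes W_2\cong Z_k$, whence
\[
1=[W_1\otimes W_2,\,W_1\otimes W_2]=[W_1\otimes W_1,\,W_2\otimes W_2]=1+[C_1,C_2],
\]
forcing $[C_1,C_2]=0$, i.e.\ the non-unit parts are genuinely orthogonal.

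In the case $W_1^\ast\cong W_2$ you produce only one $C$ and defer the second contribution to an unspecified ``further argument''. The same identity disposes of this case entirely: take $X=X'=W_1$, so $X^\ast=W_2\not\cong X'$ and Lemma~\ref{iud} makes $W_1\otimes W_1\cong Z_k$ simple; then
\[
1=[W_1\otimes W_1,\,W_1\otimes W_1]=[W_1\otimes W_1^\ast,\,W_1\otimes W_1^\ast]\geq 2,
\]
a contradiction. So the dual-pair scenario never occurs, and no auxiliary argument is needed. The missing ingredient throughout is precisely this adjunction trick $[X\otimes X',X\otimes X']=[X\otimes X^\ast,X'\otimes(X')^\ast]$; once you use it, both of your case analyses collapse into the paper's two-sentence proof.
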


\begin{proof}
By assumption there exist $X,X'\in\mathcal{O}(\mathcal{Z}(\mathcal{C}))$ with $F(X)=F(X')=X_1$, without loss of generality, such that $X^\ast\not\cong X'$.  Then by Lemma \ref{iud} we have
\begin{equation}
1=[X\otimes X',X\otimes X']=[X\otimes X^\ast,X'\otimes(X')^\ast].
\end{equation}
But $X\otimes X^\ast$ and $X'\otimes(X')^\ast$ are isomorphic to either $\mathbbm{1}\oplus P\oplus Q$ with $F(P)=X_2$ and $F(Q)=X_3$, or $\mathbbm{1}\oplus R$ with $F(R)=X_2\oplus X_3$ for some $P,Q,R\in\mathcal{O}(\mathcal{Z}(\mathcal{C}))$ by Figure \ref{fig:X}, proving our claim.
\end{proof}

\begin{lemma}\label{laaast}
Let $\mathcal{C}$ be a pseudounitary fusion category with $K(\mathcal{C})=K(\mathcal{R})$.  If $n_{3,1,i}>0$, $n_{3,1,j}>0$, and $n_{3,1,k}=0$ for any combination of $i,j,k\in\{1,2,5\}$, then $n_{9,1,k}>0$.
\end{lemma}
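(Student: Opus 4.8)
\textbf{Proof proposal for Lemma \ref{laaast}.}

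The plan is to mimic the structure of Lemmas \ref{prev}, \ref{next}, and \ref{ast}: take a simple object $X\in\mathcal{O}(\mathcal{Z}(\mathcal{C}))$ with $F(X)=X_i$ (which exists since $n_{3,1,i}>0$), tensor it with one of the $Z_\ell$ from Lemma \ref{fcd}, and read off the constraints on the decomposition into simples of $\mathcal{Z}(\mathcal{C})$ using Figure \ref{fig:X} and Note \ref{ass}. The key point is that $Z_k$ (in the indexing of Lemma \ref{iud}, so $\{i,j,k\}=\{1,2,5\}$) has $F(Z_k)=X_0\oplus X_k\oplus X_3$, and tensoring $X$ by $Z_k$ should force a simple summand whose image under $F$ contains $X_k$ but \emph{not} $X_0$; since $n_{3,1,k}=0$ there is no simple object with $F$-image exactly $X_k$, so the ``leftover'' piece carrying $X_k$ must be larger, and chasing dimensions/decompositions in Figure \ref{fig:X} should pin it down to something of dimensional Galois orbit type $o_3$, i.e. forcing $n_{9,1,k}>0$ (the label-$9$ object has $F$-image $X_1\oplus X_3$ in its orbit, with $u_1^{-1}u_2\beta$ the smallest dimension above the ``basic'' ones). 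First I would compute $F(X\otimes Z_k)=X_i\otimes(X_0\oplus X_k\oplus X_3)$ using the fusion matrices $N_0,\ldots,N_5$ from Example \ref{one}; by symmetry under the $S_3$-action permuting $\{1,2,5\}$ (visible in the fusion rules and in the Galois action $\hat\sigma$) it suffices to do one representative case, say $i=1$, $k=2$, $j=5$, or whichever assignment makes the arithmetic cleanest.

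The argument then proceeds as follows. Because $[F(Z_k),X_0]=1$ and $F(X)=X_i$ with $i\neq k$, we have $[\mathbbm{1},X\otimes Z_k^\ast]=[X_i,F(Z_k)]=0$, so $X\otimes Z_k$ contains no copy of the unit; moreover, exactly as in Lemma \ref{prev}, for every simple summand $Y$ of $X\otimes Z_k$ the image $F(Y)$ contains $X_i$ (from $[X\otimes Y^\ast,Z_k]>0$ and $[F(Z_k),X_0]=1$ we get $[X_i,F(Y)]>0$). Next I would argue $X\otimes Z_k$ cannot itself be simple (its $F$-image has dimension $u_2(1+u_2+u_1u_2)$, which exceeds the largest entry in Figure \ref{fig:X} after accounting for the orbit-type bounds; alternatively, no single column of Figure \ref{fig:X} matches $F(X_i)\otimes F(Z_k)$). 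So $X\otimes Z_k$ decomposes as a sum of at least two simples, each containing $X_i$ in its $F$-image, and at least one of them containing $X_k$ (since $X_k$ appears in $F(X\otimes Z_k)$). That summand $Y$ has $[X_i,F(Y)]\geq1$, $[X_k,F(Y)]\geq1$, and $[X_0,F(Y)]=0$; consulting Figure \ref{fig:X} and eliminating the ``basic'' objects $Z_0,\ldots,Z_5$ (which all have $[X_0,F(\cdot)]\neq 0$ among the ones with $X_3$, or are ruled out by the $n_{3,1,k}=0$ hypothesis which kills label $3$), the only admissible possibility should be a label-$9$ object (orbit $o_3$, with $F$-image $X_i\oplus X_k\oplus$ possibly more within the orbit pattern $\{X_1\oplus X_3,\,X_1\oplus X_4,\ldots\}$). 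Hence $n_{9,1,k}>0$.

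The main obstacle I anticipate is \emph{ruling out the other candidate summands}: a priori the summand $Y$ carrying $X_k$ could be an object of label $4$, $5$, $10$, $11$, $17$, $18$, or $19$ rather than label $9$, and I will need to use the induction-restriction multiplicity bookkeeping from Section \ref{sec:indres} (the constraints $[X_3,F(I(X_3))]=24$ and $[X_4,F(I(X_4))]=33$, plus the list of $45$ admissible decompositions) together with the precise shape of $F(X\otimes Z_k)$ to exclude them — in particular, the hypothesis $n_{3,1,k}=0$ should be what forbids the ``cheap'' alternatives and forces the $\beta$-dimensional object of label $9$. A secondary subtlety is that I have only been tracking the aggregate multiplicity $[X_1\oplus X_2\oplus X_5,F(X)]$, so I must be careful that ``$F(Y)$ contains $X_k$'' genuinely distinguishes $k$ from $i$ and from the third index $j$; here the $S_3$-equivariance of the whole setup under Note \ref{ass} (which makes $Z_1,Z_2,Z_5$ and $X_1,X_2,X_5$ interchangeable in a controlled way) is what lets me transfer the computation for one $(i,j,k)$ to all of them, so I would state that reduction explicitly at the start and then carry out a single case.
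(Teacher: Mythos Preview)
Your approach has a genuine gap that cannot be patched along the lines you sketch. You correctly argue (following Lemma~\ref{prev}) that every simple summand $Y$ of $X\otimes Z_k$ satisfies $[X_i,F(Y)]>0$. But the object you are trying to exhibit is a label-$9$ simple with $[X_k,F(\cdot)]=1$, i.e.\ one with $F$-image exactly $X_k\oplus X_3$. Since $i\neq k$, that object has $[X_i,F(\cdot)]=0$ and therefore \emph{cannot} occur as a summand of $X\otimes Z_k$ at all. So the ``main obstacle'' is not ruling out competitors: the target itself is already excluded by your own constraint. There is also a secondary slip: the claim that $X_k$ appears in $F(X\otimes Z_k)=X_i\otimes(X_0\oplus X_k\oplus X_3)$ is not true in general; for $i=1$, $k=2$ one gets $3X_1\oplus X_3\oplus 2X_4$, which contains no $X_2$.

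The paper's proof avoids both problems by tensoring $X$ with its own dual rather than with any $Z_\ell$. The fusion rules give $X_m\otimes X_m=X_0\oplus X_{\sigma(m)}\oplus X_3$ for $m\in\{1,2,5\}$, where $\sigma=(1\,2\,5)$. Given $k$, one of the two available indices $i,j$ equals $\sigma^{-1}(k)$; take $X$ with that $F$-image. Then $F(X\otimes X^\ast)=X_0\oplus X_k\oplus X_3$, the unit absorbs $X_0$, and the hypothesis $n_{3,1,k}=0$ forbids a simple with $F$-image $X_k$ alone. By Figure~\ref{fig:X} the only remaining possibility is a single simple $Z$ with $F(Z)=X_k\oplus X_3$, which is precisely $n_{9,1,k}>0$.
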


\begin{proof}
Assume without loss of generality that $n_{3,1,1}>0$, $n_{3,1,5}>0$, and $n_{3,1,2}=0$.  Then there exists $X\in\mathcal{O}(\mathcal{Z}(\mathcal{C}))$ with $F(X)=X_1$ and there does not exist $Y\in\mathcal{O}(\mathcal{Z}(\mathcal{C}))$ with $F(Y)=X_2$.  Hence we must have $X\otimes X^\ast\cong\mathbbm{1}\oplus Z$ for some $Z\in\mathcal{O}(\mathcal{Z}(\mathcal{C}))$ where $F(Z)=X_2\oplus X_3$ by Figure \ref{fig:X}, proving our claim.
\end{proof}

With the technical lemmas proven, we now proceed with the final computation whose SageMath \cite{sagemath} code can be found in Appendix \ref{a:3}.  First, for each of the 45 dimensional Galois orbit decompositions computed in Section \ref{sec:indres}, denoted $o$, we create a list, denoted $\mathcal{L}_o$, of potential tuples $n_{i,j,k}\in\mathbb{Z}_{\geq0}$ for $k=1,2,5$ and $0\leq i\leq13$ \& $17\leq i\leq20$ (recall there are no simple objects of orbit type $o_6$) such that Equations (\ref{1})--(\ref{5}) are satisfied.  The computation of $\mathcal{L}_o$ is independent of $k=1,2,5$ since the decompositions of $F(I(X_1))$, $F(I(X_2))$ and $F(I(X_5))$ are symmetric under any permutation of $\{1,2,5\}$.  Next, for $k=1,2,5$, i.e. \!for each triplet $x,y,z\in\mathcal{L}_o$ ordered lexicographically, we verify that the number of simple objects implied by $x,y,z$ agrees with the orbit decomposition $o$.  The number of feasible triplets $x,y,z\in\mathcal{L}_o$ is then displayed and the assumption of Note \ref{ass} is verified with a warning displayed if this assumption of the technical lemmas does not apply.  Though this does not occur, it is necessary to check.  Lastly, consistency with Lemmas \ref{iud}--\ref{laaast} is verified and any feasible triplets are displayed.

\par There are exactly two possible solutions: one with $\mathrm{rank}(\mathcal{Z}(\mathcal{C}))=24$ and one with $\mathrm{rank}(\mathcal{Z}(\mathcal{C}))=36$.  The solution of rank 24 can be eliminated by a simple fusion argument.  The spurious solution has $I(X_j)$ decomposing into 15 simple summands for exactly one of $j=1,2,5$ with a summand $X\in\mathcal{O}(\mathcal{Z}(\mathcal{C}))$ such that $F(X)=X_j\oplus X_3$.  Assume without loss of generality that $j=1$.  We compute
\begin{equation}
F(X\otimes Z_1)=(X_1\oplus X_3)\otimes(X_0\oplus X_1\oplus X_3)=2X_0\oplus4X_1\oplus2X_2\oplus5X_3\oplus2X_4\oplus X_5.
\end{equation}
As $[X_0,F(Y)]$, $[X_1\oplus X_2\oplus X_5,F(Y)]$, $[X_3,F(Y)]$, and $[X_4,F(Y)]$ are determined for each $Y\in\mathcal{O}(\mathcal{Z}(\mathcal{C}))$ by Figure \ref{fig:X} (there are 13 distinct $F(Y)$ in this solution), one may verify by hand that there is no decomposition of $X\otimes Z_1$ into simple objects for this hypothetical solution.  The only remaining solution, of rank 36, has the unique decompositions of $F(X)$ for $X\in\mathcal{O}(\mathcal{Z}(\mathcal{C}))$ found in Figure \ref{fig:W}.

%%%%%%%%%%%%%%%%
%%%%%%%%%%%%%%%%%%%%%%%%

\section{The structure of $\mathcal{Z}(\mathcal{C})$}\label{sec:struc}

\begin{proposition}\label{prop:sub}
Let $\mathcal{C}$ be a pseudounitary fusion category.  If $K(\mathcal{C})=K(\mathcal{R})$, then there exists a braided fusion subcategory $\mathcal{D}\subset\mathcal{Z}(\mathcal{C})$ such that $K(\mathcal{D})=K(\mathcal{R})$.
\end{proposition}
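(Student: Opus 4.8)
The plan is to use the detailed description of $\mathcal{O}(\mathcal{Z}(\mathcal{C}))$ in Figure \ref{fig:W} (Proposition \ref{dimensions}) to pick out six specific simple objects of $\mathcal{Z}(\mathcal{C})$ which generate a fusion subcategory with the desired Grothendieck ring. The natural candidates are $Z_0=\mathbbm{1},Z_1,Z_2,Z_5,Z_3,Z_4$ from Lemma \ref{fcd}, whose dimensions are $1,u_2^2,u_2^2,u_2^2,u_1^2u_2^2,u_1^{-2}u_2^4$ --- i.e. exactly the \emph{squares} of the Frobenius-Perron dimensions of $X_0,\ldots,X_5$ in $\mathcal{R}$. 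So the first step is to let $\mathcal{D}$ be the fusion subcategory of $\mathcal{Z}(\mathcal{C})$ generated by $Z_1$ (equivalently, by $\{Z_0,\ldots,Z_5\}$), which is automatically braided since $\mathcal{Z}(\mathcal{C})$ is, and to show $\mathcal{O}(\mathcal{D})=\{Z_0,\ldots,Z_5\}$ with fusion rules matching the matrices $N_0,\ldots,N_5$ of Example \ref{one}.

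Next I would pin down the fusion rules among $Z_0,\ldots,Z_5$. The key computational tool is that the forgetful functor $F:\mathcal{Z}(\mathcal{C})\to\mathcal{C}$ is a faithful monoidal functor, so $[Z_i\otimes Z_j, Z_k]\leq [F(Z_i)\otimes F(Z_j), F(Z_k)]$, and by Note \ref{ass} (verified in the rank-36 solution) we know $F(Z_0)=X_0$, $F(Z_j)=X_0\oplus X_j\oplus X_3$ for $j=1,2,5$, $F(Z_3)=\bigoplus_{j=0}^5 X_j$, and $F(Z_4)=X_3\oplus X_4\oplus F(Z_3)$. Using these decompositions together with the fusion rules of $\mathcal{R}$, one computes each $F(Z_i\otimes Z_j)=F(Z_i)\otimes F(Z_j)$ in $K(\mathcal{R})$ and then checks, using Figure \ref{fig:W} to enumerate which simple objects of $\mathcal{Z}(\mathcal{C})$ have which images under $F$, that the only way to decompose $F(Z_i\otimes Z_j)$ into a sum of the $F(Y)$'s is as $\bigoplus_k (N_k)_{i,j}\, F(Z_k)$ --- in particular all simple summands of $Z_i\otimes Z_j$ already lie in $\{Z_0,\ldots,Z_5\}$, so $\mathcal{D}$ is closed and $\{Z_0,\ldots,Z_5\}=\mathcal{O}(\mathcal{D})$. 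A clean bookkeeping shortcut here is a dimension count: $\dim(\mathcal{D})=1+3u_2^4+u_1^4u_2^4+u_1^{-4}u_2^8$, and since $F$ preserves dimension and $[Z_0,F(Z_i\otimes Z_j)]=(N_0)_{i,j}$ forces $Z_i^\ast\cong Z_j$ iff $j$ matches the self-dual pairing in $K(\mathcal{R})$, the multiplicities are rigidly determined.

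The main obstacle I anticipate is ruling out ``leakage'': a priori $Z_i\otimes Z_j$ could contain a simple object $Y$ with $[X_0,F(Y)]=0$, i.e. one of labels $3$--$13,17$--$20$ in Figure \ref{fig:X}, whose image under $F$ happens to fit inside $F(Z_i)\otimes F(Z_j)$. The way around this is exactly the argument pattern already used in Lemmas \ref{iud} and \ref{laaast}: if $[Z_i\otimes Z_j,Y]>0$ then $[Z_i\otimes Y^\ast, Z_j]>0$, and since $[X_0,F(Z_j)]=1$ this forces $[X_0, F(Z_i)\otimes F(Y^\ast)]>0$, hence (as $F(Z_i)$ is a multiplicity-free self-dual sum containing $X_0$ with multiplicity one) $F(Y)$ must share a common simple constituent with $F(Z_i)$ in a constrained way; running this for all relevant $i$ and comparing with Figure \ref{fig:W} eliminates every non-$Z$ candidate. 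Once closure is established, the identification $K(\mathcal{D})=K(\mathcal{R})$ is immediate from matching fusion matrices, and combined with Proposition \ref{brayded} (a pseudounitary braided categorification of $K(\mathcal{R})$ cannot exist) this yields a contradiction, completing the proof of Theorem \ref{the}.
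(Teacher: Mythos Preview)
Your choice of generators is the problem. The objects $Z_0,\ldots,Z_5$ have dimensions $1,u_2^2,u_2^2,u_2^2,u_1^2u_2^2,u_1^{-2}u_2^4$, i.e.\ the \emph{squares} of the Frobenius--Perron dimensions of $X_0,\ldots,X_5$. But squaring is not a character of $K(\mathcal{R})$: for instance $u_2^4\neq 1+u_2^2+u_1^2u_2^2$, so the relation $X_1^{\otimes2}=X_0\oplus X_2\oplus X_3$ already fails at the level of dimensions. More concretely, with the decompositions from Note~\ref{ass} one computes
\[
F(Z_1\otimes Z_1)=(X_0\oplus X_1\oplus X_3)^{\otimes2}=3X_0\oplus5X_1\oplus2X_2\oplus6X_3\oplus3X_4\oplus X_5,
\]
and since every $F(Z_j)$ satisfies $[X_0,F(Z_j)]\geq[X_1,F(Z_j)]$, no nonnegative integer combination of the $F(Z_j)$ can have $X_0$-multiplicity $3$ and $X_1$-multiplicity $5$. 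So $\{Z_0,\ldots,Z_5\}$ is not closed under tensor product, and your ``leakage'' argument cannot succeed because leakage genuinely occurs.

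The paper instead takes a simple object $A_1$ with $F(A_1)=X_1$ (label $3$ in Figure~\ref{fig:X}, dimension $u_2$, not $u_2^2$), whose existence is guaranteed by the rank-$36$ solution in Figure~\ref{fig:W}. From $A_1$ it builds $A_2,A_3$ (with $F(A_j)\in\{X_2,X_5\}$), $B$ (with $F(B)=X_3$), and $C$ (with $F(C)=X_4$); these six objects have the \emph{same} dimensions as the $X_j$, and the paper verifies closure and the fusion rules of $K(\mathcal{R})$ by explicit computation in Sections~\ref{sec:ding}--\ref{sec:sub}. The role of the $Z_j$ in that argument is only auxiliary (they appear in products like $A_1\otimes Z_2$ in Lemma~\ref{prev}), not as the simple objects of $\mathcal{D}$.
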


\begin{proof}
By Proposition \ref{dimensions}, there exists $A_1\in\mathcal{O}(\mathcal{Z}(\mathcal{C}))$ with $F(A_1)=X_1$.  In Section \ref{sec:ding} below, we show $A_1$ generates a fusion subcategory containing the tensor unit, and five distinguished simple objects which we denote $A_1,A_2,A_3,B,C$.  In Section \ref{sec:sub} we prove that the set $\{\mathbbm{1},A_1,A_2,A_3,B,C\}$ is closed under tensor products/quotients, forming a fusion subcategory $\mathcal{D}$.  The fusion rules of $K(\mathcal{D})$ coincide with those of $\mathcal{R}$ from Example \ref{one} under the assignment $\mathbbm{1}\mapsto X_0$, $A_1\mapsto X_1$, $A_2\mapsto X_2$, $A_3,\mapsto X_5$, $B\mapsto X_3$ and $C\mapsto X_4$.
\end{proof}

%%%

\subsection{Five distinguished simple objects}\label{sec:ding}

Let $A_1\in\mathcal{O}(\mathcal{Z}(\mathcal{C}))$ be a simple object with $F(A_1)=X_1$.  As $F(A_1\otimes A_1^\ast)=X_0\oplus X_2\oplus X_3$ and $[A_1\otimes A_1^\ast,\mathbbm{1}]=1$, then we must have $A_1\otimes A_1^\ast=\mathbbm{1}\oplus A_2\oplus B_1$ for some $A_2,B_1\in\mathcal{O}(\mathcal{Z}(\mathcal{C}))$ with $F(A_2)=X_2$ and $F(B_1)=X_3$ as there does not exist $X\in\mathcal{O}(\mathcal{Z}(\mathcal{C}))$ with $F(X)=X_2\oplus X_3$ by Proposition \ref{dimensions}.  Symmetrically, we have
\begin{align}
A_1\otimes A_1^\ast&\cong\mathbbm{1}\oplus A_2\oplus B_1 \label{tf}\\
A_2\otimes A_2^\ast&\cong\mathbbm{1}\oplus A_3\oplus B_2,\text{ and} \\
A_3\otimes A_3^\ast&\cong\mathbbm{1}\oplus A_4\oplus B_3\label{tf2}
\end{align}
 for some $A_3,A_4,B_2,B_3\in\mathcal{O}(\mathcal{Z}(\mathcal{C}))$ with $F(A_3)=X_5$, $F(A_4)=X_1$, and $F(B_2)=F(B_3)=X_3$.  From isomorphisms (\ref{tf})--(\ref{tf2}), $A_2,A_3,A_4,B_1,B_2,B_3$ are all self-dual.  But there are only two isomorphism classes of $X\in\mathcal{O}(\mathcal{Z}(\mathcal{C}))$ with $F(X)=X_1$, hence this implies $A_1\cong A_1^\ast$ as well.  Note that $[A_j\otimes A_{j+1},A_j]=1$ from equivalences (\ref{tf})--(\ref{tf2}).  So we must have
\begin{align}
A_1\otimes A_2&\cong A_1\oplus C_1 \label{tf3}\\
A_2\otimes A_3&\cong A_2\oplus C_2,\text{ and} \label{tf35}\\
A_3\otimes A_4&\cong A_3\oplus C_3 \label{tf4}.
\end{align}
for some $C_1,C_2,C_3\in\mathcal{O}(\mathcal{Z}(\mathcal{C}))$ with $F(C_j)=X_4$ for $j=1,2,3$.  We may then compute for $j=1,2$,
\begin{equation}\label{eight}
2=[A_j\otimes A_{j+1},A_j\otimes A_{j+1}]=[A_j\otimes A_j,A_{j+1}\otimes A_{j+1}]=[\mathbbm{1}\oplus A_{j+1}\oplus B_j,\mathbbm{1}\oplus A_{j+2}\oplus B_{j+1}].
\end{equation}
But $A_{j+1}\not\cong A_{j+2}$ thus $B_1\cong B_2\cong B_3$ and we denote this distinguished isomorphism class by $B$.  The isomorphisms in (\ref{tf})--(\ref{tf2}) imply $[A_j\otimes B,A_j]=1$ for $j=1,2,3$.  So from $F(A_j\otimes B)=X_k\oplus X_3\oplus X_4$ where $k=2,3,1$ for $j=1,2,3$, respectively, $A_j\otimes B$ has three simple summands as there does not exist $X\in\mathcal{O}(\mathcal{Z}(\mathcal{C}))$ with $F(X)=X_3\oplus X_4$.  We then compute
\begin{equation}
3=[A_j\otimes B,A_j\otimes B]=[A_j\otimes A_j,B\otimes B]=[\mathbbm{1}\oplus A_{j+1}\oplus B,B\otimes B].\label{fortyeighta}
\end{equation}
But $F(B\otimes B)=\oplus_{j=0}^5X_j$ and (\ref{fortyeighta}) is independent of $j=1,2,3$ so we must have $[A_j,B\otimes B]=1$ for $j=2,3,4$ and therefore $[B,B\otimes B]=1$ as well.
Finally, this implies $[A_1,B\otimes B]=[A_1\otimes B,B]=1$, thus $A_1=A_4$.  To simplify future notation we will denote $A_2=A_5$ and $A_3=A_6$ as well.
\par We may now compute from (\ref{tf3})--(\ref{tf4}), 
\begin{equation}
[A_1\oplus C_1,A_2\oplus C_2]=[A_1\otimes A_2,A_2\otimes A_3]=[A_1\otimes A_3,A_2\otimes A_2]=[A_3\oplus C_3,\mathbbm{1}\oplus A_3\oplus B]=1.
\end{equation}
Thus $C_1\cong C_2$ and likewise $C_2\cong C_3$.  Denote this distinguished simple object by $C$.  For $j=1,2,3$, we have
\begin{equation}
[A_j\oplus C,A_j\otimes B]=[A_j\otimes A_{j+1},A_j\otimes B]=[A_j\otimes A_j,A_{j+1}\otimes B]=[\mathbbm{1}\oplus A_{j+1}\oplus B,A_{j+1}\otimes B]=2
\end{equation}
because $A_{j+1}\otimes B$ contains three simple summands: $A_{j+1}$, $B$, and another whose image under the forgetful functor is $X_4$.  Therefore $A_j\otimes B\cong A_j\oplus B\oplus C$ for $j=1,2,3$.  Lastly, we compute
\begin{equation}\label{newref2}
2=[A_1\otimes B,A_2\otimes B]=[A_1\otimes A_2,B\otimes B]=[A_1\oplus C,B\otimes B].
\end{equation}
We already know $[A_1,B\otimes B]=1$, thus $[C,B\otimes B]=1$ as well.  Moreover, we have completely determined the fusion rules between $A_1,A_2,A_3$ and $B$.  For $j=1,2,3$ (recalling that $A_1=A_4$), these fusion rules are summarized by:
\begin{align}
A_j\otimes A_j&\cong\mathbbm{1}\oplus A_{j+1}\oplus B, & A_j\otimes A_{j+1}&\cong A_j\oplus C \label{thurr0},\\
A_j\otimes B&\cong A_j\oplus B\oplus C,\qquad\text{ and } & B\otimes B&\cong\mathbbm{1}\oplus A_1\oplus A_2\oplus A_3\oplus B\oplus C.\label{thurr}
\end{align}
In particular, $C\cong C^\ast$.

%%%%%%%%%%%%%%%%

\subsection{A fusion subcategory}\label{sec:sub}

We will proceed to show that $\mathbbm{1},A_1,A_2,A_3,B,C$ are the simple objects of a fusion subcategory of $\mathcal{Z}(\mathcal{C})$.  We need only describe the fusion rules of $C$ with the remaining objects using the results of Section \ref{sec:ding}.   From (\ref{thurr0}), for $j=1,2,3$, $[A_j\otimes C,A_{j+1}]=[A_j\otimes C,A_{j+2}]=1$ while $[A_j\otimes C,A_j]=0$, and $[A_j\otimes C,B]=1$ from (\ref{thurr}) (recalling that $A_1=A_4$ and $A_2=A_5$).  This determines $A_j\otimes C$ up to a single simple summand whose image under the forgetful functor is $X_4$.  We compute
\begin{equation}
[A_j\otimes A_{j+1},A_j\otimes C]=[(\mathbbm{1}\oplus A_{j+1}\oplus B)\otimes A_{j+1},C]=[\mathbbm{1}\oplus 2A_{j+1}\oplus A_{j+2}\oplus 2B\oplus C,C]=1.
\end{equation}
Therefore
\begin{equation}\label{thurrA}
A_j\otimes C\cong A_{j+1}\oplus A_{j+2}\oplus B\oplus C.
\end{equation}
Similarly from (\ref{thurr}) we have $[B\otimes C,A_j]=1$ for $j=1,2,3$ and $[B\otimes C,B]=1$.  This determines $B\otimes C$ up to two unknown summands whose images under the forgetful functor are both $X_4$.  We then compute
\begin{equation}
[A_j\otimes B,B\otimes C]=[B\otimes B,A_j\otimes C]=[\mathbbm{1}\oplus A_1\oplus A_2\oplus A_3\oplus B\oplus C,A_{j+1}\oplus A_{j+2}\oplus B\oplus C]=4.
\end{equation}
But we know $A_j\otimes B\cong A_j\oplus B\oplus C$, and $[B\otimes C,A_j]=[B\otimes C,B]=1$, so we must conclude $[B\otimes C,C]=2$.  This determines
\begin{equation}\label{thurrB}
B\otimes C\cong A_1\oplus A_2\oplus A_3\oplus B\oplus2C.
\end{equation}
From the above, we have $[C\otimes C,\mathbbm{1}]=1$, $[C\otimes C,A_j]=1$ for $j=1,2,3$ and $[C\otimes C,B]=2$.  This determines $C\otimes C$ up to two unknown summands whose images under the forgetful functor are both $X_4$.  We compute
\begin{equation}
[A_1\otimes A_2,C\otimes C]=[A_1\otimes C,A_2\otimes C]=[A_2\oplus A_3\oplus B\oplus C,A_1\oplus A_3\oplus B\oplus C]=3.
\end{equation}
Along with the fact that $[C\otimes C,A_1]=1$, this implies $[C\otimes C,C]=2$, thus
\begin{equation}\label{thurrC}
C\otimes C\cong\mathbbm{1}\oplus A_1\oplus A_2\oplus A_3\oplus 2B\oplus 2C.
\end{equation}
One should verify that the fusion rules determined in (\ref{thurr0}), (\ref{thurr}), (\ref{thurrA}), (\ref{thurrB}), and (\ref{thurrC}) coincide with those of $\mathcal{R}$ in Example \ref{one} under the assignment $\mathbbm{1}\mapsto X_0$, $A_1\mapsto X_1$, $A_2\mapsto X_2$, $A_3,\mapsto X_5$, $B\mapsto X_3$ and $C\mapsto X_4$.

\appendix
\section{SageMath code}\label{sage}

The following SageMath \cite{sagemath} code is written so that Appendices \ref{a:1}, \ref{a:2}, and \ref{a:3} can be run separately, or in sequence.  We include this code in a supplementary text file for ease of use.

\subsection{Proof of Proposition \ref{brayded}}\label{a:1}
\small
\begin{verbatim}
#Appendix A.1
print('*** Appendix A.1 ***')
q,G=CyclotomicField(9).gen(),CyclotomicField(9).automorphisms()
u1,u2,s=q-q^2-q^5,1-q^4-q^5,G[5]
d=[1,u2,u2,u1*u2,u1^(-1)*u2^2,u2]
N=[[[1,0,0,0,0,0],[0,1,0,0,0,0],[0,0,1,0,0,0],[0,0,0,1,0,0],[0,0,0,0,1,0],[0,0,0,0,0,1]],
   [[0,1,0,0,0,0],[1,0,1,1,0,0],[0,1,0,0,1,0],[0,1,0,1,1,0],[0,0,1,1,1,1],[0,0,0,0,1,1]],
   [[0,0,1,0,0,0],[0,1,0,0,1,0],[1,0,0,1,0,1],[0,0,1,1,1,0],[0,1,0,1,1,1],[0,0,1,0,1,0]],
   [[0,0,0,1,0,0],[0,1,0,1,1,0],[0,0,1,1,1,0],[1,1,1,1,1,1],[0,1,1,1,2,1],[0,0,0,1,1,1]],
   [[0,0,0,0,1,0],[0,0,1,1,1,1],[0,1,0,1,1,1],[0,1,1,1,2,1],[1,1,1,2,2,1],[0,1,1,1,1,0]],
   [[0,0,0,0,0,1],[0,0,0,0,1,1],[0,0,1,0,1,0],[0,0,0,1,1,1],[0,1,1,1,1,0],[1,1,0,1,0,0]]]
t1=[(q^a,(s^2)(q^a),(s^4)(q^a)) for a in range(9)]
t2=[(q^a,(s^2)(q^a),(s^4)(q^a)) for a in [1,2,4,5,7,8]]
R=range(3)
t2=t2+[(q^(3*x),q^(3*y),q^(3*z)) for x in R for y in R for z in R]
t0=[]
for x in t1:
    for y in t2:
        th=(1,y[0]/x[0],y[1]/x[0],x[1]/x[0],x[2]/x[0],y[2]/x[0])
        g=sum(d[z]^2*th[z] for z in range(6))
        if 9*u2**2==g*g.conjugate():
            t0.append(th)
thetas=list(set(t0))
R=range(6)
for t in thetas:
    S=[[t[x]^(-1)*t[y]^(-1)*sum(N[x][y][z]*d[z]*t[z]for z in R)for y in R]for x in R]
    def M(a,b,c):
        return (1/(9*u2^2))*sum([S[a][d]*S[b][d]*S[c][d]/S[0][d] for d in R])
    print('The fusion coefficient (N_1)_(1,1) should be 0, but it is', M(1,1,1))
\end{verbatim}

\normalsize
\subsection{Construction of Figure \ref{fig:X}}\label{a:2}
\small
\begin{verbatim}
#Appendix A.2
print('*** Appendix A.2 ***')
q=CyclotomicField(9).gen()
u1,u2,beta=q-q^2-q^5,1-q^4-q^5,2-q^4-q^5
def d1(a,b,c,d):
    return real(3^a*(u1^b*u2^c)^2*beta^d)
def d2(a,b,c,d):
    return d1(a,2-(b+c+d),b+2,d)
def d3(a,b,c,d):
    return d1(a,c-2,4-(b+c+d),d)
Z = log(u1)+log(u2)
X1,X2=Z^(-1)*(log(u2)-2*log(u1)),Z^(-1)*(log(u1)-2*log(u2))
def ub(a,b,c,d):
    return real(X1^(-1)*(c+d*Z^(-1)*log(u1)))
def lb(a,b,c,d):
    return real(c*X2-d*Z^(-1)*log(u2))
for a in range(5):
    for b in range(-2,5):
        for c in range(7):
            for d in range(3):
                if ((a-d)%2==0 and b+c+d<=4 and
                    ub(a,b,c,d)>=b>=lb(a,b,c,d) and
                    ub(a,2-(b+c+d),b+2,d)>=2-(b+c+d)>=lb(a,2-(b+c+d),b+2,d) and
                    ub(a,c-2,4-(b+c+d),d)>=c-2>=lb(a,c-2,4-(b+c+d),d)):
                    orbit = [d1(a,b,c,d),d2(a,b,c,d),d3(a,b,c,d)]
                    if max(orbit)<3938 and min(orbit)>49:
                        print((a,b,c,d), 'is a possible dim(X)^2=3^a(u1^b*u2^c)^2*beta^d')
dims=[[0,1],[1,u1^2*u2^2],[2,u1^(-2)*u2^4],[3,u2],[4,u1*u2^2],
      [5,u1^(-1)*u2^3],[6,u1*u2],[7,u2^3],[8,u1^(-1)*u2^2],[9,u1^(-1)*u2*beta],
      [10,u2*beta],[11,u1^(-1)*u2^2*beta],[12,u2^2],[13,u2^2],[14,3*u2],
      [15,3*u1*u2^2],[16,3*u1^(-1)*u2^3],[17,u1^(-1)*beta^2],
      [18,u1^(-1)*u2*beta^2],[19,u1^(-2)*u2*beta^2],[20,3*u2^2]]
print('Figure 3:')
for d in dims:
    for x in range(0,10):
        for y in range(0,10):
            for z in range(0,10):
                if (d[0] in [0,1,2,12] and
                    1+x*u2+y*u1*u2+z*u1^(-1)*u2^2==d[1]):
                    print(d[0], (1,x,y,z))
                else:
                    if x*u2+y*u1*u2+z*u1^(-1)*u2^2==d[1]:
                        print(d[0], (0,x,y,z))
\end{verbatim}

%%%%%%%%%%%%

\normalsize
\subsection{Induction-restriction functors}\label{a:3}
\small

\begin{verbatim}
#Appendix A.3
print('*** Appendix A.3 *** (warning: slow)')
q=CyclotomicField(9).gen()
u1,u2,beta=q-q^2-q^5,1-q^4-q^5,2-q^4-q^5
dims=[[0,1],[1,u1^2*u2^2],[2,u1^(-2)*u2^4],[3,u2],[4,u1*u2^2],
      [5,u1^(-1)*u2^3],[6,u1*u2],[7,u2^3],[8,u1^(-1)*u2^2],[9,u1^(-1)*u2*beta],
      [10,u2*beta],[11,u1^(-1)*u2^2*beta],[12,u2^2],[13,u2^2],[14,3*u2],
      [15,3*u1*u2^2],[16,3*u1^(-1)*u2^3],[17,u1^(-1)*beta^2],
      [18,u1^(-1)*u2*beta^2],[19,u1^(-2)*u2*beta^2],[20,3*u2^2]]
B=81*u2^4-(1^2+(u1^2*u2^2)^2+(u1^(-2)*u2^4)^2)-3*(u2^2)^2
ods=[(u2)^2+(u1*u2^2)^2+(u1^(-1)*u2^3)^2,
     (u1*u2)^2+(u2^3)^2+(u1^(-1)*u2^2)^2,
     (u1^(-1)*u2*beta)^2+(u2*beta)^2+(u1^(-1)*u2^2*beta)^2,
     (u2^2)^2,
     (3*u2)^2+(3*u1*u2^2)^2+(3*u1^(-1)*u2^3)^2,
     (u1^(-1)*beta^2)^2+(u1^(-1)*u2*beta^2)^2+(u1^(-2)*u2*beta^2)^2,
     (3*u2^2)^2]
bounds=[floor(real(B/x))+1 for x in ods]
L=[]
test=[(1,o1,o2,o3,3,o5,o6,o7,o8) for o1 in range(bounds[0])
     for o2 in range(bounds[1]) for o3 in range(bounds[2])
     for o5 in range(bounds[3]) for o6 in range(bounds[4])
     for o7 in range(bounds[5]) for o8 in range(bounds[6])]
ir33=Matrix((1+2^2,1+1,1+1,1+1,1,0,3^2+3^2,1+2^2+1,0))
ir44=Matrix((1+2^2,1+1,2^2+1,1+2^2,0,1,3^2+3^2,1+2^2+1,3^2))
for x in test:
    if (24==Matrix(x)*ir33.transpose() and
        33==Matrix(x)*ir44.transpose()):
        L.append(x)
print('there are', len(L), 'possible decompositions of I(X_3) & I(X_4), hence sets O(Z(C))')

L=[(1,0,0,2,3,6,0,2,0),(1,0,1,1,3,6,0,2,0),(1,0,2,0,3,6,0,2,0),
   (1,1,0,1,3,0,0,2,1),(1,1,0,1,3,9,0,2,0),(1,1,0,4,3,0,0,1,0),
   (1,1,1,0,3,0,0,2,1),(1,1,1,0,3,9,0,2,0),(1,1,1,3,3,0,0,1,0),
   (1,1,2,2,3,0,0,1,0),(1,1,3,1,3,0,0,1,0),(1,1,4,0,3,0,0,1,0),
   (1,2,0,0,3,3,0,2,1),(1,2,0,0,3,12,0,2,0),(1,2,0,3,3,3,0,1,0),
   (1,2,1,2,3,3,0,1,0),(1,2,2,1,3,3,0,1,0),(1,2,3,0,3,3,0,1,0),
   (1,3,0,2,3,6,0,1,0),(1,3,1,1,3,6,0,1,0),(1,3,2,0,3,6,0,1,0),
   (1,4,0,1,3,0,0,1,1),(1,4,0,1,3,9,0,1,0),(1,4,0,4,3,0,0,0,0),
   (1,4,1,0,3,0,0,1,1),(1,4,1,0,3,9,0,1,0),(1,4,1,3,3,0,0,0,0),
   (1,4,2,2,3,0,0,0,0),(1,4,3,1,3,0,0,0,0),(1,4,4,0,3,0,0,0,0),
   (1,5,0,0,3,3,0,1,1),(1,5,0,0,3,12,0,1,0),(1,5,0,3,3,3,0,0,0),
   (1,5,1,2,3,3,0,0,0),(1,5,2,1,3,3,0,0,0),(1,5,3,0,3,3,0,0,0),
   (1,6,0,2,3,6,0,0,0),(1,6,1,1,3,6,0,0,0),(1,6,2,0,3,6,0,0,0),
   (1,7,0,1,3,0,0,0,1),(1,7,0,1,3,9,0,0,0),(1,7,1,0,3,0,0,0,1),
   (1,7,1,0,3,9,0,0,0),(1,8,0,0,3,3,0,0,1),(1,8,0,0,3,12,0,0,0)]
K=Matrix(((1,4,9,1,4,9,1,1,1,4,1,4,9,1,1,4,1,4,1,1,1,4,9,1,4,9,1,4,9),
          (2,2,0,2,2,0,0,0,1,0,2,2,0,0,1,0,1,0,0,0,2,2,0,2,2,0,2,2,0),
          (1,2,3,2,4,6,0,1,1,2,1,2,3,1,0,0,1,2,1,0,2,4,6,1,2,3,0,0,0),
          (1,2,3,2,4,6,0,1,1,2,2,4,6,0,1,2,2,4,0,1,2,4,6,1,2,3,3,6,9)))
for o in L:
    solutions=[]
    test=[(n11j,n12j,n13j,
           n21j,n22j,n23j,
           n31j,n41j,n51j,n52j,
           n71j,n72j,n73j,
           n91j,n101j,n102j,n111j,n112j,
           n121j,
           n131j,
           n181j,n182j,n183j,n191j,n192j,n193j,
           n201j,n202j,n203j)
           for n11j in range(o[0]+1)
           for n12j in range(o[0]-n11j+1)
           for n13j in range(o[0]-n11j-n12j+1)
           for n21j in range(o[0]+1)
           for n22j in range(o[0]-n21j+1)
           for n23j in range(o[0]-n21j-n22j+1)
           for n121j in range(o[4]+1)
           if n11j+n12j*2+n13j*3+n21j+n22j*2+n23j*3+n121j == 3
           for n31j in range(o[1]+1)
           for n41j in range(o[1]+1)
           for n51j in range(o[1]+1)
           for n52j in range(o[1]-n51j+1)
           for n71j in range(o[2]+1)
           for n72j in range(o[2]-n71j+1)
           for n73j in range(o[2]-n71j-n72j+1)
           for n91j in range(o[3]+1)
           for n101j in range(o[3]+1)
           for n102j in range(o[3]-n101j+1)
           for n111j in range(o[3]+1)
           for n112j in range(o[3]-n111j+1)
           for n131j in range(o[5]+1)
           for n181j in range(o[7]+1)
           for n182j in range(o[7]-n181j+1)
           for n183j in range(o[7]-n181j-n182j+1)
           for n191j in range(o[7]+1)
           for n192j in range(o[7]-n191j+1)
           for n193j in range(o[7]-n191j-n192j+1)
           for n201j in range(o[8]+1)
           for n202j in range(o[8]-n201j+1)
           for n203j in range(o[8]-n201j-n202j+1)
           if K*Matrix((n11j,n12j,n13j,n21j,n22j,n23j,n31j,
                        n41j,n51j,n52j,
                        n71j,n72j,n73j,
                        n91j,n101j,n102j,n111j,n112j,
                        n121j,
                        n131j,
                        n181j,n182j,n183j,n191j,n192j,n193j,
                        n201j,n202j,n203j)
                      ).transpose()==Matrix((15,12,12,15)).transpose()]
    M=[]
    for x in test:
        for y in test:
            if y <= x:
                for z in test:
                    if z <= y:
                        def s(j):
                            return x[j]+y[j]+z[j]
                        if ((1/3)*s(0)+(1/2)*s(1)+s(2)==o[0] and
                            (1/3)*s(3)+(1/2)*s(4)+s(5)==o[0] and
                            s(6)==o[1] and
                            s(7) == o[1] and
                            (1/2)*s(8)+s(9)==o[1] and
                            (1/3)*s(10)+(1/2)*s(11)+s(12)==o[2] and
                            s(13)==o[3] and
                            (1/2)*s(14)+s(15)==o[3] and
                            (1/2)*s(16)+s(17)==o[3] and
                            s(18)==o[4] and
                            s(19)==o[5] and
                            (1/3)*s(20)+(1/2)*s(21)+s(22)==o[7] and
                            (1/3)*s(23)+(1/2)*s(24)+s(25)==o[7] and
                            (1/3)*s(26)+(1/2)*s(27)+s(28)==o[8]):
                            M.append((x,y,z))
    if len(M)>0:
        print('(o_j)_0^8=', o, 'has', len(M), 'decomposition(s) of I(X_j) for j=1,2,5')
        N=[]
        for x in M:
            c=0
            if x[0][0]*x[0][3]*x[1][0]*x[1][3]*x[2][0]*x[2][3]!=1:
                print('***** WARNING: Technical lemmas do not apply *****')
            else:
                for y in x:
                    #Lemma 8
                    if (y[6]<2 or
                        y[6]>=2 and (a[2]+y[13])>=2):
                        #Lemma 6
                        if (y[6]==0 or
                            y[6]==1 and y[13]>0 and y[19]>0 or
                            y[6]==2 and y[7]>0 and y[19]>0 or
                            y[6]==2 and y[17]>0):
                            #Lemma 7
                            if (y[6]==0 or
                                y[6]>0 and y[8]>0 and y[19]>0 or
                                y[6]>0 and y[7]>0 and y[14]>0):
                                c=c+1
                if c==3:
                    #Lemma 9
                    if (x[0][6]*x[1][6]!=1 or x[2][6]!=0 or
                        x[2][13]>0 and (x[0][6]*x[1][6]==1 and x[2][6]==0)):
                        N.append(x)
        if len(N)>0:
            print('-->', len(N), 'decomposition(s) agree with the technical lemmas <---')
            for z in N:
                print(z)
        else:
            print('XXXXX no decomposition(s) agree with the technical lemmas XXXXX')
    else:
        print('(o_j)_0^8=', o, 'has no decompositions of I(X_j) for j=1,2,5')
\end{verbatim}
\normalsize

\bibliographystyle{plain}
\bibliography{bib}

\end{document}